
\documentclass[11pt,oneside,reqno]{amsart}  

\usepackage[utf8]{inputenc}            
\usepackage[T1]{fontenc}
\usepackage{microtype}

\usepackage[english]{babel}  
    
\usepackage{url}
\usepackage{xcolor}                   
\usepackage{graphics,graphicx,xypic}
\usepackage{caption}
\usepackage{subcaption}
\usepackage{amsthm,amscd}                     
\usepackage{dsfont,txfonts,bbold} 
\usepackage{mathrsfs}
\usepackage{textcomp}
\usepackage{enumerate} 
\usepackage{hyperref}
\usepackage[toc,page]{appendix}         

\usepackage{natbib}


\usepackage[a4paper,scale={0.72,0.74},marginratio={1:1},footskip=7mm,headsep=10mm]{geometry}

\numberwithin{equation}{section}    

\frenchspacing



\newtheorem{theorem}{Theorem}[section]

\newtheorem{proposition}[theorem]{Proposition}

\newtheorem{lemma}[theorem]{Lemma}

\theoremstyle{definition}
\newtheorem{definition}[theorem]{Definition}
\newtheorem{remark}[theorem]{Remark}


\newcommand{\N}{\mathbb{N}}

\newcommand{\ind}{\mathds{1}}


\newcommand{\tx}[1]{\texttt{#1}}
\newcommand{\vl}{\, \tilde{\wedge}\, }


\title{Collapse transition of the interacting prudent walk}

\author{Nicolas Pétrélis}
\address{Laboratoire de Mathématiques Jean Leray UMR 6629, 2, rue de la Houssinière
BP 92208, 44322 Nantes Cedex 3}

\email{nicolas.petrelis@univ-nantes.fr}
\author{Niccolò Torri}
\address{Laboratoire de Mathématiques Jean Leray UMR 6629, 2, rue de la Houssinière
BP 92208, 44322 Nantes Cedex 3}
\email{niccolo.torri@univ-nantes.fr}
\keywords{Polymer collapse, phase transition, prudent walk, self-avoiding random walk, free energy}
\subjclass[2010]{82B26, 60K35, 82B41, 60K15}
\thanks{N.T. was supported by the \emph{ ``Investissements d'avenir"} program (ANR-11-LABX-0020-01).}


\newenvironment{myenumerate}{%
\renewcommand{\theenumi}{\arabic{enumi}}%
\renewcommand{\labelenumi}{{\rm(\theenumi)}}%
\begin{list}{\labelenumi}
	{%
	\setlength{\itemsep}{0.4em}%
	\setlength{\topsep}{0.5em}%
	\setlength\leftmargin{2.45em}%
	\setlength\labelwidth{2.05em}%
	\setlength{\labelsep}{0.4em}%
	\usecounter{enumi}%
	}%
	}%
{\end{list}
}

{\end{list}
}

\renewenvironment{enumerate}{
\begin{myenumerate}}%
{\end{myenumerate}}

\newenvironment{myitemize}{%
\begin{list}{$\bullet$}%
 	{%
	\setlength{\itemsep}{0.4em}%
	\setlength{\topsep}{0.5em}%
	\setlength\leftmargin{2.45em}%
	\setlength\labelwidth{2.05em}%
	\setlength{\labelsep}{0.4em}%
	}%
	}%
{\end{list}}

\renewenvironment{itemize}{
\begin{myitemize}}%
{\end{myitemize}}

\begin{document}

\begin{abstract}

This article is dedicated to the study of the 2-dimensional interacting prudent self-avoiding walk (referred to by the acronym IPSAW) and in particular to its collapse transition. The interaction intensity is denoted by $\beta>0$ and the set of trajectories  consists of those self-avoiding paths 
respecting the prudent condition, which means that they do  not take a step towards a previously visited lattice site.
The IPSAW interpolates between the interacting partially directed self-avoiding walk (IPDSAW) that was analyzed in details  in,  e.g., 
\cite{ZL68}, \cite{BGW92}, \cite{CGP13} and  \cite{GP13}, and the interacting self-avoiding walk (ISAW)  for which the collapse transition was conjectured in \cite{S86}.

Three main theorems are proven. We show first that  IPSAW undergoes a collapse transition at finite temperature and, up to our knowledge, there was so far no proof in the literature of the existence of a collapse transition for a non-directed model  
built with self-avoiding path. We also prove that the 
free energy of IPSAW is equal to that of a restricted version of IPSAW, i.e., the interacting two-sided prudent
walk. Such free energy is computed by considering only those prudent path with a general  north-east orientation. As a by-product of this result 
we obtain  that  the exponential growth rate of generic prudent paths equals that of two-sided prudent paths and this answers an open problem raised in  e.g., \cite{B10} or \cite{DG08}.   Finally we show that, for every $\beta>0$,  the free energy of ISAW itself is always larger than $\beta$ and this rules out a possible self-touching saturation of ISAW in its conjectured collapsed phase.

%
%
%

\end{abstract}

\maketitle

\section{Introduction}
The collapse transition of  self-interacting random walks is a challenging issue, arising in the study of the 
$\theta$-point of an homopolymer dipped in a repulsive solvent. Different mathematical models have been built by physicists 
to try and improve their understanding of this phenomenon. For such models, the possible 
spatial configurations 
of the polymer are provided by random walk trajectories. In \cite{S86}, Saleur studies the \emph{interacting self-avoiding walk} (referred to as ISAW) that is built with self-avoiding paths which are relevant from the physical viewpoint because they fulfill 
the exclusion volume effect, a feature that real-world polymers indeed satisfy. However, self-avoiding paths, especially in dimension $2$ and $3$, 
are complicated objects. This is the reason why, in the mathematical literature, collapse transition models were rather built by 
either relaxing the self-avoiding feature of the paths (see for instance \cite{vdHK01} or 
\cite{vdHKK02}) or by considering partially directed paths. 
This is the case for  the \emph{interacting partially directed self-avoiding walk} (referred to as IPDSAW) 
that was introduced in \cite{ZL68} and subsequently studied 
in e.g.  \cite{BGW92} or   \cite{GP13}, \cite{CGP13}
and \cite{CP15}).

In the present paper, we focus on the \emph{interacting prudent self-avoiding walk} (referred to as IPSAW), a model 
built with prudent paths, i.e., non-directed self-avoiding paths which can not take a step towards a previously visited lattice site. 
The IPSAW clearly interpolates between IPDSAW and ISAW since partially directed paths are prudent paths which themselves are self-avoiding paths. 
An interesting feature of prudent paths is that although they are non-directed and self-avoiding, the prudent condition, especially in dimension 2, imposes some geometric constraints that makes them more tractable than self-avoiding paths themselves.  This can be observed in the existing literature dedicated to prudent walks e.g., in  \cite{B10} or  \cite{BFV10}.


\subsection*{Organization of the paper}
 In Section \ref{the model}, we give a rigorous mathematical definition of IPSAW and we state our main results. Section \ref{bar} is dedicated to the 
comparison of our result with the existing literature. We will in particular show how IPSAW can be viewed as a limiting case of the undirected polymer in a poor solvent 
studied in \cite{vdHK01} and \cite{vdHKK02} and therefore shed some new light on the existence of a conjectured critical curve for this model. In Section \ref{decomp}, 
we start by increasing the complexity of the partially directed self-avoiding path by introducing the \emph {two-sided} prudent self-avoiding path.  Then, we show how to decompose a generic prudent path into a collection of two-sided paths.
Section \ref{proof of Thm1} is dedicated to the proof of Theorem \ref{Thm1} that states the existence of a  collapse transition for IPSAW
at finite temperature.  Section \ref{proof of thh1} provides an algorithm 
which shows that the free energy of IPSAW coincides with that of North-East interacting prudent self-avoiding walk (referred to as NE-IPSAW), which is a restriction of IPSAW built with a particular type of two-sided paths, i.e., the Nort-East prudent paths. With Section \ref{proofThm3}, we provide a lower bound on the free energy of ISAW which allows us to compare the nature of the collapse transitions of IPDSAW or IPSAW with that of 
ISAW. Finally, in Section \ref{app:freeenergy} we prove the existence of the free energy of NE-IPSAW.


\section{The interacting prudent self-avoiding walk (IPSAW)}\label{the model}

\subsection{Description of the models} 
Let $L\in \N$ be the system size and let $\Omega_L^{\tx{SAW}}$ be the set of $L$-step prudent paths in $\mathbb{Z}^2$, i.e.,
\begin{align}
\nonumber \Omega_L^{\tx{PSAW}}=\big\{w:=(w_i)_{i=0}^L\in (\mathbb{Z}^2)^{L+1}\colon\,& w_{0}=0,\  w_{i+1}-w_i\in \{\leftarrow,\rightarrow,\downarrow,\uparrow\},\  0\leq i \leq L-1,\\
&w \  \text{satisfies the prudent condition}\big\},
\end{align}
where the \emph{prudent condition} for a path $w$ means that it does not take any step in the direction 
of a lattice site already visited.
We also consider a subset of $\Omega_{L}^{\tx{PSAW}}$ denoted by $\Omega_L^{\tx{NE}}$ containing those $L$-step prudent paths 
with a general \emph{north-east} orientation. We postpone the precise definition of $\Omega_L^{\tx{NE}}$ to Section \ref{sec:NE-IPSAW} because this requires some additional notations  but one easily understands what such path look like with Figure \ref{figb}.

At this stage we build two polymer models: the IPSAW for which the  set of allowed spatial configurations for the polymer is given by $\Omega_L^{\tx{PSAW}}$ and its North-East counterpart (NE-IPSAW)
for which the  set of  configurations  is given by $\Omega_L^{\tx{NE}}$. For both models, each step of the walk is an abstract monomer and 
 we want to take into account the repulsion between monomers and the environment around them. 
This is achieved indirectly, by encouraging monomers to attract each other,
  i.e.,  by assigning an energetic reward $\beta\geq 0$  to any pair of non-consecutive 
  steps of the walk though adjacent on the lattice $\mathbb{Z}^2$. To that aim, 
we associate with every path $w$ the sequence of those points in the middle of each step, i.e., $u_i=w_{i-1}+\frac{w_i-w_{i-1}}{2}$ ($1\leq i\leq L$) and 
we reward every non-consecutive pair $(u_i,u_j)$ at distance one, i.e, $\|u_i-u_j\| = 1$, see Figure \ref{fig:IPDRW1}.  The energy associated with a given $w\in \Omega_L$ is defined by an explicit Hamiltonian, that is 
\begin{equation}\label{eq:ham1}
\mathrm H\,(w):=\sum_{\substack{i,j=0\\i<j}}^L\ind_{\{\lVert u_i-u_j\rVert=1\}},
\end{equation}
so that $\mathrm{Z}_{\beta, L}$ the partition function of IPSAW and $\mathrm{Z}^{\tx{NE}}_{\beta,\, L}$ the partition function of the North-east model equal
\begin{equation}
\mathrm{Z}_{\beta,\, L}:=\sum_{w\in \Omega_L^{\tx{PSAW}}} e^{\, \beta \,   \mathrm H\,(w)} \quad \text{and}  \quad \mathrm{Z}^{\tx{NE}}_{\beta,\, L}:=\sum_{w\in \Omega_L^{\tx{NE}}} e^{\, \beta \,   \mathrm H\, (w)}.
\end{equation}


\smallskip

The key objects of our analysis are the free energies of both models, i.e., $\mathrm F(\beta)$ and  $\mathrm F^{\tx{NE}}(\beta)$ which record the exponential growth rate of the partition function sequences $(\mathrm{Z}_{\beta,\, L})_{L\in \N}$ and $(\mathrm{Z}^{\tx{NE}}_{\beta,\, L})_{L\in \N}$, respectively.  Thus, 
\begin{equation}
\label{eq:FreeEnergyIPSAW}
\mathrm F(\beta):= \lim_{L\to\infty}\frac{1}{L}\log \mathrm Z_{\beta,\, L} \quad \text{and} \quad \mathrm F^{\tx{NE}}(\beta):= \lim_{L\to\infty}\frac{1}{L}\log \mathrm Z^{\tx{NE}}_{\beta,\, L}.
\end{equation}
The convergence in the right hand side of \eqref{eq:FreeEnergyIPSAW} will be proven in Section \ref{app:freeenergy}. The convergence in the l.h.s. of  \eqref{eq:FreeEnergyIPSAW} is more complicated and  it will   be 
obtained as a by-product of Theorem \ref{thh1} below.

\subsection{Main results}
\label{MainRes}
 
In the present Section we 
state our main results and we give some hints about their proof. We pursue the discussion in Section \ref{bar} below,  
by explaining how our results answer some open problems leading to a better  comprehension of interacting self-avoiding walk. 

\smallskip

 With Theorem \ref{thh1} below, 
we state that the free energies of  IPSAW and of NE-IPSAW are equal. Our proof  
is displayed in Section \ref{proof of thh1} and is purely combinatorial.  It consists in 
building a sequence of path transformations $(M_L)_{L\in \N}$ such that 
for every $L\in \N$, $M_L$ maps any generic path in $\Omega_L^{\tx{PSAW}}$ onto a \emph{2-sided} prudent path in $\Omega_L^{\tx{NE}}$ and  satisfies the following properties:
\begin{itemize}
\item for every $w\in \Omega_L^{\tx{PSAW}}$,  the difference between the Hamiltonians of $w$ and of $M_L(w)$ is $o(L)$,
\item the number of ancestors of a given path in $\Omega_L^{\tx{NE}}$ by $M_L$ can be shown to be $e^{o(L)}$.
\end{itemize} 
%
Such a mapping allows us to prove the following theorem. 
\begin{theorem}\label{thh1} For $\beta\geq 0$, 
\begin{equation}\label{thh}
\mathrm  F(\beta)= \mathrm  F^{\rm\tx{NE}}(\beta).
\end{equation}
\end{theorem}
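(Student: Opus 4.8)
The plan is to establish the two inequalities $\mathrm F^{\tx{NE}}(\beta)\le \mathrm F(\beta)$ and $\mathrm F(\beta)\le \mathrm F^{\tx{NE}}(\beta)$ separately; since the limit defining $\mathrm F(\beta)$ in the left-hand side of \eqref{eq:FreeEnergyIPSAW} has not yet been shown to exist, \eqref{thh} should be read as the assertion that this limit exists and equals $\mathrm F^{\tx{NE}}(\beta)$. The first inequality is immediate: from $\Omega_L^{\tx{NE}}\subseteq\Omega_L^{\tx{PSAW}}$ one gets $\mathrm Z^{\tx{NE}}_{\beta,L}\le \mathrm Z_{\beta,L}$ for every $L$, hence $\liminf_{L}\frac1L\log\mathrm Z_{\beta,L}\ge\mathrm F^{\tx{NE}}(\beta)$ once the convergence in the right-hand side of \eqref{eq:FreeEnergyIPSAW}, proven in Section~\ref{app:freeenergy}, is granted. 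All the work lies in the reverse bound $\limsup_{L}\frac1L\log\mathrm Z_{\beta,L}\le\mathrm F^{\tx{NE}}(\beta)$, which I would deduce from the transformations $M_L\colon\Omega_L^{\tx{PSAW}}\to\Omega_L^{\tx{NE}}$ described just before the statement, granting their two properties $\sup_{w}\bigl|\mathrm H(w)-\mathrm H(M_L(w))\bigr|=o(L)$ and $\sup_{w'}\bigl|M_L^{-1}(w')\bigr|=e^{o(L)}$. Splitting the partition function according to the image under $M_L$,
\begin{align}
\mathrm Z_{\beta,L}
&=\sum_{w'\in\Omega_L^{\tx{NE}}}\ \sum_{w\,:\,M_L(w)=w'}e^{\beta\mathrm H(w)}
\ \le\ \sum_{w'\in\Omega_L^{\tx{NE}}}\bigl|M_L^{-1}(w')\bigr|\,e^{\beta(\mathrm H(w')+o(L))}\nonumber\\
&\ \le\ e^{o(L)}\,\mathrm Z^{\tx{NE}}_{\beta,L},
\end{align}
and dividing by $L$, letting $L\to\infty$ and using again the convergence of $\frac1L\log\mathrm Z^{\tx{NE}}_{\beta,L}$ yields the reverse bound, and with it both the existence of the limit on the left of \eqref{eq:FreeEnergyIPSAW} and the equality \eqref{thh}.

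It remains to build $M_L$ and to verify the two properties. Starting from the decomposition of a generic prudent path introduced in Section~\ref{decomp}, one writes $w\in\Omega_L^{\tx{PSAW}}$ as a concatenation $w^{(1)}\cdots w^{(k)}$ of two-sided prudent pieces arranged in a nested, spiral-like pattern; the structural input I would use is that the number of pieces is sublinear, $k=k(w)=o(L)$ uniformly over $\Omega_L^{\tx{PSAW}}$, morally because each two-sided layer that inflates the bounding box in a genuinely new direction has length of the order of the current perimeter, which keeps growing. The map $M_L$ then straightens $w$ into a north-east path by applying to each piece $w^{(m)}$ a lattice isometry $\phi_m$ (a rotation by a multiple of $\pi/2$, possibly composed with a reflection) that renders it north-east oriented, and by re-assembling the straightened pieces, without changing the total number of steps, into a single path of $\Omega_L^{\tx{NE}}$: each successive piece is translated so that it leaves the current bounding box through its north or east face and is inflated only further to the north-east. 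The delicate point is that the pieces must not merely be laid end to end, because two pieces that are adjacent on the lattice in $w$ (e.g.\ consecutive layers of a spiral, which may share $\Theta(\sqrt L)$ contacts) would then lose all their mutual contacts; instead the re-assembly is designed so that pieces in contact in $w$ remain in contact in $M_L(w)$.

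Granting such a construction, the two properties follow. Each $\phi_m$ preserves the self-contacts internal to $w^{(m)}$, and the re-assembly preserves the bulk of the contacts between consecutive pieces, so the only contacts created or destroyed by $M_L$ sit within a bounded neighbourhood of the $O(k)$ junctions between pieces, at most $O(1)$ per junction; their total number is therefore $O(k)=o(L)$, giving the first property. For the second, one reconstructs $w$ from $M_L(w)$ by recording the $k-1$ positions at which $M_L(w)$ must be cut (since $k=o(L)$ one has $\sum_{j\le k}\binom{L}{j}=e^{o(L)}$ choices), the isometries $\phi_m$ (at most $8^{k}=e^{o(L)}$ choices) and the bounded combinatorial data describing how each junction was reshaped; this data determines $w$, so $\bigl|M_L^{-1}(w')\bigr|=e^{o(L)}$.

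The main obstacle is precisely the construction and analysis of $M_L$. One must (i) make rigorous the nested two-sided decomposition of a general prudent path together with the sublinear bound on the number of layers; (ii) define the translations and the reshaping near the junctions so that the re-assembled object is genuinely self-avoiding, satisfies the prudent condition, and has the same length as $w$; and (iii) control the accounting of contacts and of reconstruction data along the seams. This is the combinatorial core of Section~\ref{proof of thh1}, and since $\mathrm F(\beta)$ has no known closed form there seems to be no shortcut avoiding such an explicit surgery on prudent paths.
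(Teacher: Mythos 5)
Your overall framework --- the two inequalities, the easy one from $\Omega_L^{\tx{NE}}\subseteq\Omega_L^{\tx{PSAW}}$, and the hard one from a map $M_L$ with the two properties $\sup_w|\mathrm H(w)-\mathrm H(M_L(w))|=o(L)$ and $\sup_{w'}|M_L^{-1}(w')|=e^{o(L)}$ --- is exactly the paper's, and your reconstruction-data argument for the preimage count is in the right spirit. The gap is in the energy accounting for the map itself, which is the actual content of the proof.

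You assert that the re-assembly can be ``designed so that pieces in contact in $w$ remain in contact in $M_L(w)$,'' and that consequently the contacts destroyed by $M_L$ number ``at most $O(1)$ per junction,'' hence $O(k)=o(L)$ in total. Neither claim holds. When a macro-block wraps around the bounding box of all preceding macro-blocks (the spiral picture you yourself invoke), its contacts with the previous layers are carried by its \emph{first stretches}, which can have length of order $\sqrt{L}$ or more; after straightening every macro-block to a north-east orientation and laying them out so that each successive one exits through the north-east corner of the current bounding box, these contacts are irrecoverably lost --- a NE-oriented piece attached at the NE corner cannot hug the outside of the previous box. So the loss per junction is not $O(1)$ but can be of order $\sqrt{L}$, and with $k=\Theta(\sqrt{L})$ junctions (the paper's Lemma \ref{lemma:macroblocknumber} gives $k=O(\sqrt L)$, matching your heuristic) the naive total loss is $O(L)$, which destroys the argument. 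The paper's resolution, which is absent from your proposal, is two-fold: (i) it shows (Remark \ref{rthj}) that the inter-macro-block contacts are bounded by the lengths of the first stretches of each macro-block, removes from the path every leading stretch of length larger than $L^{1/4}$ (there are at most $L^{3/4}$ of them, so this costs only $e^{O(L^{3/4}\log L)}$ in the preimage count), so that the surviving inter-block contacts total at most $O(\sqrt L)\cdot L^{1/4}=o(L)$ and can simply be discarded; and (ii) it compensates the energy carried by the removed stretches, of total length $\tilde L$, by appending a maximally compact square block of length $\tilde L$ (contributing $\tilde L-2\tilde L^{1/2}$ contacts) to the final NE-path. Without some such removal-plus-compensation mechanism your construction cannot achieve $\mathrm H(w)-\mathrm H(M_L(w))=o(L)$. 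A secondary omission: after cutting, reorienting and reassembling, the oriented blocks inside each macro-block must be repaired to satisfy the exit conditions that make them concatenable into a legitimate element of $\Omega_L^{\tx{NE}}$ (Steps 2 and 3 of the paper); your proposal does not address this, though it only affects the bookkeeping, not the main idea.
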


The free energy equality in \eqref{thh} will subsequently be used to establish Theorem \ref{Thm1} below, which states that IPSAW undergoes a collapse transition at finite temperature. 

\medskip

  \begin{theorem}
\label{Thm1}
There exists  a $\beta_c^{\tx{IPSAW}}\in (0,\infty)$ such that 
\begin{align}
\mathrm  F(\beta)&>\beta \quad  \text{for every} \quad  \beta<\beta_c^{\tx{IPSAW}},\\
\nonumber\mathrm   F(\beta)&=\beta \quad  \text{for every} \quad  \beta\geq \beta_c^{\tx{IPSAW}}.
\end{align} 
Thus, the phase diagram $[0,\infty)$ is partitioned into 
a \emph{collapsed phase}, $\mathcal{C}:= [\beta_c^{\tx{IPSAW}},\infty)$ inside which the free energy \eqref{eq:FreeEnergyIPSAW} 
 is linear and an \emph{extended phase}, $\mathcal{E}=[0,\beta_c^{\tx{IPSAW}})$. 
\end{theorem}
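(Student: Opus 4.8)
The plan is to derive Theorem~\ref{Thm1} from three properties of $\mathrm F$: convexity, the universal lower bound $\mathrm F(\beta)\ge\beta$, and the collapse identity $\mathrm F(\beta)=\beta$ for $\beta$ large. Only the last point is substantial. First I would record the easy facts. Convexity: $\beta\mapsto\log\mathrm Z_{\beta,L}$ is the cumulant generating function $\log\sum_w e^{\beta\mathrm H(w)}$, hence convex, so $\mathrm F$ is convex and therefore continuous on $[0,\infty)$. Upper bound: $\mathrm H(w)\le L$ for every $w\in\Omega_L^{\tx{PSAW}}$, because the midpoint $u_e$ of an edge $e$ has exactly four sites at Euclidean distance $1$; a short parity/congruence check shows none of them is the midpoint of an edge of the opposite orientation, while the two lying along the line carrying $e$ are midpoints of edges consecutive to $e$, so $e$ has at most two non-consecutive contact partners and $2\,\mathrm H(w)\le 2L$. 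Hence $\mathrm Z_{\beta,L}\le|\Omega_L^{\tx{PSAW}}|e^{\beta L}\le 4^{L}e^{\beta L}$ and $\mathrm F(\beta)\le\beta+\log 4$. Lower bound: fill a square region of side $\sim\sqrt L$ by a boustrophedon of stacked horizontal rows joined by single vertical steps; one checks directly that this path is prudent, and it has at least $L-O(\sqrt L)$ contacts, so $\mathrm Z_{\beta,L}\ge e^{\beta(L-O(\sqrt L))}$ and $\mathrm F(\beta)\ge\beta$. Finally, since $\mathrm H\ge0$ gives $\mathrm Z_{\beta,L}\ge|\Omega_L^{\tx{PSAW}}|\ge 2^L$, we get $\mathrm F(\beta)\ge\log 2>0$ for all $\beta\ge0$, so $\mathrm F(\beta)>\beta$ whenever $\beta<\log 2$: the extended phase is non-empty.

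The heart of the matter is $\mathrm F(\beta)=\beta$ for all large $\beta$. Here I would use Theorem~\ref{thh1} to replace $\mathrm F$ by $\mathrm F^{\tx{NE}}$ and work with North-East prudent paths, whose sequential two-sided block structure from Section~\ref{decomp} makes them tractable to count. Writing $\mathrm Z^{\tx{NE}}_{\beta,L}=e^{\beta L}\sum_{\delta\ge0}q_L(\delta)e^{-\beta\delta}$ with $q_L(\delta):=\#\{w\in\Omega_L^{\tx{NE}}\colon\mathrm H(w)=L-\delta\}$, it suffices to prove a bound of the form $q_L(\delta)\le e^{o(L)}K^{\delta}$ with $K$ independent of $L$ and $\delta$: then $\sum_{\delta\ge0}q_L(\delta)e^{-\beta\delta}\le e^{o(L)}/(1-Ke^{-\beta})$ for $\beta>\log K$, whence $\mathrm F^{\tx{NE}}(\beta)\le\beta$ and, combined with the lower bound, $\mathrm F(\beta)=\beta$ for all $\beta>\log K$. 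Proving such an estimate amounts to showing that a near-maximally folded North-East path is, up to $e^{o(L)}$ choices, a boustrophedon packing of a nearly rectangular region perturbed by a few local ``defects'', that each unit of lost contact forces only a bounded number of local choices, and --- crucially --- that the prudent and North-East constraints prevent those defects from being placed freely in the bulk (an unconstrained placement would yield a $\binom{L}{\delta}$-type count, incompatible with the very existence of a transition). This confinement of defects, argued on the two-sided decomposition of Section~\ref{decomp}, is the step I expect to be genuinely delicate and is the main obstacle.

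To conclude, set $\mathrm G(\beta):=\mathrm F(\beta)-\beta$. By the above $\mathrm G$ is convex and continuous on $[0,\infty)$, with $0\le\mathrm G\le\log 4$, $\mathrm G(0)\ge\log 2>0$, and $\mathrm G\equiv0$ on $(\log K,\infty)$. Let $\beta_c^{\tx{IPSAW}}:=\inf\{\beta\ge0\colon\mathrm G(\beta)=0\}$; the set is non-empty (it contains $(\log K,\infty)$) so $\beta_c^{\tx{IPSAW}}<\infty$, and since $\{\mathrm G>0\}$ is open and contains $0$ we have $\beta_c^{\tx{IPSAW}}\ge\log 2>0$. For $\beta\ge\beta_c^{\tx{IPSAW}}$, picking $\beta'>\log K$ and applying convexity on $[\beta_c^{\tx{IPSAW}},\beta']$ together with $\mathrm G\ge0$ and $\mathrm G(\beta_c^{\tx{IPSAW}})=\mathrm G(\beta')=0$ (the former by continuity) forces $\mathrm G(\beta)=0$, i.e. $\mathrm F(\beta)=\beta$; and for $\beta<\beta_c^{\tx{IPSAW}}$ the definition of the infimum gives $\mathrm G(\beta)\ne0$, hence $\mathrm G(\beta)>0$, i.e. $\mathrm F(\beta)>\beta$. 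This is exactly the asserted dichotomy, with the collapsed phase $\mathcal C=[\beta_c^{\tx{IPSAW}},\infty)$ and the extended phase $\mathcal E=[0,\beta_c^{\tx{IPSAW}})$.
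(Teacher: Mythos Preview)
Your overall architecture is correct and matches the paper's: reduce to $\mathrm F^{\tx{NE}}$ via Theorem~\ref{thh1}, establish the trivial bounds $\mathrm F(\beta)\ge\beta$ and $\mathrm F(0)>0$, prove $\mathrm F(\beta)=\beta$ for all large $\beta$, and deduce the dichotomy from convexity. Your treatment of the easy pieces (convexity, the boustrophedon lower bound, $\mathrm F(0)\ge\log 2$, and the convexity argument closing the dichotomy) is fine and, if anything, more explicit than the paper, which takes $\mathrm F(\beta)\ge\beta$ and the existence of the extended phase from the comparison $\mathrm F\ge\mathrm F^{\tx{IPDSAW}}$.

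The genuine gap is exactly where you flag it: the bound $q_L(\delta)\le e^{o(L)}K^\delta$ (equivalently $\mathrm Z^{\tx{NE}}_{\beta,L}\le C(\beta)e^{\beta L}$ for large $\beta$) is asserted but not proved. Your heuristic of ``confined defects on a boustrophedon'' is not the mechanism, and the reasoning you offer is circular (you invoke the existence of the transition to argue that defects cannot be placed freely). The paper does \emph{not} proceed combinatorially. Instead it uses the random-walk representation of partially directed paths from \cite{GP13,CGP13}: writing the Hamiltonian of each oriented block via the operator $x\,\tilde\wedge\,y=\tfrac12(|x|+|y|-|x+y|)$, one extracts a factor $e^{\beta L}$ and is left with a product $\prod_n e^{-\frac\beta2|\ell_n+\ell_{n+1}|}$, which is reinterpreted as the transition weight of an auxiliary random walk $V$ with discrete-Laplace increments. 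The inter-block interactions are bounded by an explicit inequality of the same $\tilde\wedge$ type, which after normalization produces additional Markov transitions at parameter $\beta/2$. Concatenating the odd-indexed and even-indexed blocks separately into two inhomogeneous random walks and summing out the block lengths reduces everything to a geometric series in $c_\beta e^{-\beta/4}$ and $c_{\beta/2}$, which converges once $\beta$ is large. This is the substance of Section~\ref{proof of Thm1}; without it (or an equivalent argument) your proposal is incomplete.
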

The proof of Theorem \ref{Thm1} is displayed in Section \ref{proof of Thm1}.  It  requires to exhibit a loss of 
analyticity of $\beta \mapsto \mathrm F(\beta)$ at some positive value of $\beta$ (which is subsequently denoted by $\beta_c^{\tx{IPSAW}})$. 
The nature of the proof is much more probabilistic than that of Theorem \ref{thh1}. It indeed relies, on the one hand, on the random walk representation of the partially directed version 
of our model displayed initially in  \cite{GP13}  and, on the other hand, on the fact that 
 prudent path can be naturally decomposed into shorter partially directed paths.

Since a partially directed self-avoiding path is in particular a generic prudent path, we 
can compare the critical point  of IPSAW with the critical point of IPDSAW, which was computed explicitly in e.g. \cite{BGW92, GP13}.
We obtain that 
\begin{equation}
\label{eq:IPIPDcriticalpoint}
\beta_c^{\tx{IPDSAW}} \, \leq\, \beta_c^{\tx{IPSAW}}.
\end{equation}
The inequality in \eqref{eq:IPIPDcriticalpoint} is somehow not 
satisfactory since one wonders whether it is strict or not. This issue is left as an open question and will be discussed further in Section \ref{Open problems}. 

We conclude this section by considering the $2$-dimensional Interacting Self-Avoiding Walk (ISAW) defined exactly like the IPSAW in (\ref{eq:ham1}) but with a larger set of allowed configurations, that is (in size $L\in \N$)
\begin{align}\label{allconf}
\Omega_L^{\tx{SAW}}:=\big\{w:=(w_i)_{i=0}^L\in (\mathbb{Z}^2)^{L+1}\colon\,& w_{0}=0,\  w_{i+1}-w_i\in \{\leftarrow,\rightarrow,\downarrow,\uparrow\},\  0\leq i \leq L-1,\\
\nonumber &w \  \text{satisfies the self-avoiding condition}\big\}.
\end{align}
We denote by $Z_{L,\beta}^{\tx{ISAW}}$ the partition function of ISAW  and we define its  free energy as
\begin{equation}\label{defISAW}
F^{\tx{ISAW}}(\beta):=\liminf_{L\to \infty} \frac{1}{L} \log Z_{L,\beta}^{\tx{ISAW}},
\end{equation}
where the $\liminf$ in \eqref{defISAW} is chosen to overstep the fact that 
the convergence of the free energy remains an open issue.
\begin{theorem}
\label{Thm3}
\begin{align}
\mathrm  F^{\tx{ISAW}}(\beta)&>\beta, \quad  \text{for every} \quad  \beta\in [0,\infty).
\end{align} 
\end{theorem}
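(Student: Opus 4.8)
The plan is to bound $Z_{L,\beta}^{\tx{ISAW}}$ from below by restricting the sum to a carefully chosen subfamily of self-avoiding paths on which one can simultaneously (i) keep the number of such paths exponentially large at a rate strictly exceeding $\beta$ and (ii) retain full control of the Hamiltonian, so that the product of entropic and energetic contributions beats $e^{\beta L}$. Concretely, I would take the polymer to consist of a sequence of ``blocks,'' each block being a vertical segment (a column) of prescribed signed height $\ell_k$, glued together by horizontal steps, exactly as in the partially directed picture of IPDSAW from \cite{GP13}. A path of this type visiting $N$ columns has Hamiltonian $\mathrm H(w)=\sum_{k}(\ell_k\vl \ell_{k+1}) + (\text{number of columns}) + O(1)$ in the notation used for IPDSAW, where the first sum counts vertical self-touchings between consecutive columns and the ``$+$ number of columns'' term counts the self-touchings created inside each column between consecutive vertical steps. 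The point is that for \emph{every} such configuration the energy is at least $L-N$, i.e. at least $\beta(L-N)$ after multiplication by $\beta$, because each of the $L-N$ vertical steps is adjacent to the previous one in its column.

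The key step is then to choose the block lengths to make the entropy strictly positive while the energetic deficit relative to $e^{\beta L}$ is only $e^{\beta N}$ with $N$ linear but with a small prefactor $\varepsilon$. Pick $\varepsilon>0$ small and let each column have length exactly $m=\lfloor 1/\varepsilon\rfloor$, with the sign of the column (up or down) chosen freely, and between two consecutive columns insert one horizontal step whose direction (left or right) is also chosen freely, subject only to the prudence/self-avoidance bookkeeping. There are of order $N\sim L/m$ blocks, each contributing a bounded number of binary choices, so the number of admissible paths of length $L$ of this form is at least $c^{L/m}=e^{(\log c)L/m}$ for some $c>1$; meanwhile $\mathrm H(w)\ge L-N\ge L(1-1/m)$ on all of them. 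Hence
\begin{equation}\label{eq:ISAWlb}
\frac{1}{L}\log Z_{L,\beta}^{\tx{ISAW}} \;\ge\; \beta\Bigl(1-\tfrac{1}{m}\Bigr) + \frac{\log c}{m} + o(1).
\end{equation}
Taking $L\to\infty$ and then $m\to\infty$ along this inequality already gives $F^{\tx{ISAW}}(\beta)\ge\beta$, but the gain we actually need is that for \emph{fixed} finite $m$ the right-hand side equals $\beta - (\beta-\log c)/m$, which is $>\beta$ as soon as $\log c>\beta$; so the real work is to produce, for a given $\beta$, a block shape with entropy rate $\log c$ inside a single column that exceeds $\beta$ — this is where one must be more clever than just alternating up/down.

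The main obstacle, and the heart of the argument, is therefore the entropic estimate: a single straight column of height $m$ offers essentially no entropy, so one cannot beat an arbitrarily large $\beta$ by the crude construction above. The fix is to allow each ``block'' to itself be a short but genuinely two-dimensional self-avoiding excursion — for instance a small staircase or a fixed-width strip of height $h$ in which the walk snakes back and forth — chosen so that the walk still realizes at least (length $-$ const) self-touchings (so the energy is $\ge \beta(L - O(L/h))$) while the number of distinct excursions of a given length $h$ grows like $\mu^{h}$ for some $\mu>1$ coming from the free choices of turn directions; crucially $\mu$ does not depend on $\beta$. Then \eqref{eq:ISAWlb} becomes $F^{\tx{ISAW}}(\beta)\ge \beta(1-O(1/h)) + (\log\mu)\,O(1/h)$, and for $h$ large this is still of the form $\beta + c'/h$ with $c'>0$ \emph{independent of $\beta$} — wait, one must instead keep the deficit $\beta\cdot O(1/h)$ strictly smaller than the gain $(\log\mu)O(1/h)$, which forces $h$ to be chosen \emph{depending on} $\beta$ (large enough that the energetic cost per unit length of the non-space-filling structure is dominated), not uniformly. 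So the precise recipe is: given $\beta$, first fix the strip width/shape so that the per-step energy density of the block tends to $1$ fast enough and the per-step entropy density stays bounded below by a positive constant; then the computation \eqref{eq:ISAWlb} closes with a strict inequality. I expect the verification that such blocks have energy $\ge (\text{length})-O(1)$ — i.e. that the snaking excursions really are self-touching-saturated up to lower order — to be the one genuinely delicate combinatorial point, handled by an explicit drawing analogous to Figure \ref{fig:IPDRW1}; everything else is a concatenation/counting argument of the kind already used for IPDSAW in \cite{GP13}.
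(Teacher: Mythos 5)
Your overall strategy --- bound $Z_{L,\beta}^{\tx{ISAW}}$ from below by a family of nearly energy-saturated paths carrying positive entropy --- is the same as the paper's, and your first computation correctly diagnoses why the naive version fails: if every block of length $h$ carries entropy $\log c$ and energy deficit $\geq 1$, the bound reads $\beta+(\log c-\beta)/h$, which beats $\beta$ only for $\beta<\log c$. The gap is in your proposed fix, which does not close for large $\beta$. Tuning $h$ as a function of $\beta$ cannot help: the entropic gain and the energetic deficit are both $O(1/h)$ per step, with constants $\log\mu$ (independent of $\beta$) and $\beta\cdot C$ respectively, so no choice of $h$ changes which one wins. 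The stronger reading of your fix --- blocks of length $h$ admitting $\mu^{h}$ realizations, each with energy $\geq h-O(1)$ --- is precisely the ``genuinely delicate combinatorial point'' you defer, and it is where the argument breaks: it would give $F^{\tx{ISAW}}(\beta)-\beta\geq\log\mu+o(1)$ uniformly in $\beta$, a far stronger claim, and it is not achievable because $\mu^{h}$ realizations require $\Theta(h)$ independent local choices, while in a space-filling pattern each independent choice compatible with self-avoidance costs at least one contact, so the deficit grows linearly in $h$ rather than staying $O(1)$.

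The missing idea is \emph{sparsification}. The paper starts from the energy-maximizing partially directed path $\mathcal S_L$ (with $H(\mathcal S_L)=L-2\sqrt L$), tiles it into $L/60$ boxes of $60$ steps each, and exhibits two interchangeable fillings $A$ and $B$ of a box, with $A$ costing exactly $13$ contacts relative to $B$. Crucially, it flips only an $\epsilon$-fraction of the boxes: the energy deficit is $13\epsilon L$, \emph{linear} in $\epsilon$, while the entropy of choosing which boxes to flip is $\log\binom{L/60}{\epsilon L}\sim \epsilon\log(1/\epsilon)\,L$, \emph{superlinear} in $\epsilon$. Hence for every $\beta$ one may take $\epsilon$ small enough (roughly $\epsilon<e^{-13\beta}/60$) that the positional entropy strictly exceeds $13\beta\epsilon$, and the strict inequality follows. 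Your construction perturbs every block (density one), so entropy and deficit scale identically and the comparison is lost once $\beta$ is large; replacing ``perturb every block by one of $\mu$ variants'' with ``perturb a vanishing fraction $\epsilon$ of blocks and harvest the $\binom{n}{\epsilon n}$ positional entropy, then let $\epsilon\to0$'' is what is needed to complete the argument.
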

A straightforward consequence of Theorem \ref{Thm3} is that  the conjectured collapse transition displayed by ISAW at some 
$\beta_c^{\tx{ISAW}}$ does 
not correspond to a self-touching saturation as it is the case for 
IPDSAW and IPSAW. 

\section{Discussion} \label{bar}

\subsection{Background}

The ISAW has triggered quite a lot of attention from both the physical and the mathematical communities. Much efforts have been put, for instance,   to 
 estimate numerically the value of the critical point $\beta_c^{\tx{ISAW}}$ (see e.g. \cite{TROW96a} or \cite{TROW96b} in dimension $3$) or to compute the typical end to end distance of a  
 path at criticality (see e.g. \cite{S86}). However, 
only very few rigorous mathematical results have been obtained about it so far. For example, the existence of a collapse transition is conjectured only 
and if such transition turns out to occur, obtaining some quantitative results about the geometric conformation adopted by the path inside each phase 
is even more challenging. In view of the mathematical complexity of ISAW, other models have been introduced, somehow simpler than ISAW
and therefore more tractable mathematically.

The first attempt to investigate a simplified version of ISAW is due to  \cite{ZL68} with the Interacting Partially Directed Self-Avoiding Walk
 (IPDSAW).
Again the model is defined as in   (\ref{eq:ham1}), but with a restricted set of configurations, i.e.,
\begin{align}\label{setofIPDSAW}
\Omega_L^{\tx{PDSAW}}:=\big\{w:=(w_i)_{i=0}^L\in (\mathbb{N}\times \mathbb{Z})^{L+1}\colon\,& w_{0}=0,\  w_{i+1}-w_i\in \{\rightarrow,\downarrow,\uparrow\},\  0\leq i \leq L-1,\\
\nonumber &w \  \text{satisfies the self-avoiding condition}\big\}.
\end{align}
The IPDSAW was first investigated with combinatorial methods in e.g., \cite{BGW92} 
where the critical temperature, $\beta_c^{\tx{IPDSAW}}$, is computed. Subsequently, 
%
%
in \cite{GP13}  and  \cite{CGP13} and \cite{CP15}  a probabilistic
 approach allowed for a rather complete quantitative description of the scaling limits displayed by 
 IPDSAW in each three regimes (extended, critical and collapsed).

%
%
%
Another simplification of ISAW gave birth to the Interacting Weakly Self-Avoiding Walk (IWSAW), which  is built by  relaxing the self-avoiding 
condition imposed in ISAW such that the set of configurations $\Omega_L$
contains every $L$-step trajectory of a discrete time simple random walk on $\mathbb Z^d$ ($d\geq 1$). 
The Hamiltonian associated with every path rewards the self-touchings and penalizes the self-intersections, i.e, 
for every $w\in \Omega_L$, 
\begin{equation}\label{hamil}
H\, (w)=-\gamma \sum_{0\leq i<j\leq L} \ind_{\{w_i-w_j=0\}}\ + \beta  \sum_{0\leq i<j\leq L} \ind_{\{|u_i-u_j|=1\}}.
\end{equation}
Thus, $\gamma\geq 0$
is a parameter that can be tuned to approach the ISAW through the IWSAW, since in the limit $\gamma=\infty$ both models coincide.
The IWSAW  is investigated in two papers, i.e.,  \cite{vdHK01} and  \cite{vdHKK02}  whose results are reviewed in  \cite[Section 6.1]{dH07}.
%
%
In  \cite{vdHKK02}, the existence of a critical curve $\gamma=2d \beta$ between a localized phase 
and a collapsed phase (also referred to as minimally extended) is proven in every dimension 
$d\geq 1$. Inside the localized phase (i.e., for $\beta>\gamma/2d$) and with probability arbitrarily close to $1$ 
the polymer is confined inside a squared box of finite size. Inside the collapsed phase in turn, the typical diameter  of the polymer 
is proven to be at least $L^{1/d}$. It is conjectured that at criticality ($\beta=2d \gamma$), the polymer scales as $L^{1/d+1}$. This is made rigorous in  \cite{vdHKK02}  when  $d=1$. 
In dimension $d\geq 2$, IWSAW is conjectured to undergo another critical curve $\gamma\mapsto \beta(\gamma)$ 
between the previously mentioned collapsed phase and an extended phase inside which the typical extension of the path is expected to be that of the self-avoiding walk. This
critical curve is expected to have an horizontal asymptote $\beta=\beta^*\in (0,\infty)$ and $\beta^*$ is itself expected to equal $\beta_c^{\tx{ISAW}}$.

\subsection{Discussion of the results}\label{bar2}
As mentioned above, one of the interest of IPSAW is that it interpolates between IPDSAW,  which is now very well understood, and ISAW (or IWSAW at $\gamma=\infty$) about which most theoretical issues remain open.  
From this perspective, Theorem \ref{Thm1} clearly constitutes a step forward in the 
investigation of ISAW since, up to our knowledge, IPSAW is the first non-directed model of interacting self-avoiding walk for which the existence 
of a collapse transition is proven rigorously.

At first sight, Theorem \ref{thh1} may appear as an intermediate step in the proof of theorem \ref{Thm1}.   The fact that 
the free energies of IPSAW and of NE-IPSAW are equal allows us 
to prove  Theorem \ref{Thm1} with $2$-sided prudent paths only. However, 
the importance of Theorem \ref{thh1}  goes beyond IPSAW  itself.
The \emph{2-sided} prudent trajectories have indeed been  studied already  in 
the mathematical litterature, see e.g., \cite{B10},   \cite{DG08} or   \cite{BI15}. 
It was conjectured in \cite{B10} or \cite{DG08} that the exponential growth rate of the cardinality of $2$-sided prudent paths (as a function of their length) equals that of generic prudent paths and this is precisely what Theorem \ref{thh1} says at $\beta=0$.
Moreover this result supports somehow the conjecture that the scaling limit of the uniform prudent walk should be the same as that of its
$2$-sided counterpart, see \cite{B10}. We discuss this conjecture in Section \ref{Open problems} below.

As mentioned below Theorem \ref{Thm3},
the fact that  ISAW does not give rise to a self-touching saturation when $\beta$ becomes large enough indicates that the nature 
of its phase transition differs from that of IPDSAW and IPSAW. 
Theorem \ref{Thm3} tells us that 
for every $\beta>0$,  one can display a subset of trajectories whose contribution to the  free energy  is strictly larger than $\beta$.
As a consequence, there is no straightforward inequality between the conjectured critical point $\beta_c^{\text{ISAW}}$  and  $\beta_c^{\text{IPDSAW}}$ or between $\beta_c^{\text{ISAW}}$ and $\beta_c^{\text{IPSAW}}$.


\smallskip

\subsection{Open problems}\label{Open problems}
We state 3 open problems which, in our opinion, are interesting but require to bring the
instigation of IPSAW and ISAW some steps further. We discuss those 3 issues subsequently.   
\begin{enumerate}
\item Compute $\beta_c^{\tx{IPSAW}}$ and therefore determine whether or not
 $\beta_c^{\tx{IPSAW}}>\beta_c^{\tx{IPDSAW}}$.
\item Provide the scaling limit of IPSAW in its three regimes, i.e., extended, critical and  collapsed. 
\item Prove that ISAW also undergoes a collapse transition at some $\beta>0$.
\end{enumerate}
Concerning the  first open question above, one should keep in mind Theorem \ref{thh1}. Proving that $\beta_c^{\tx{IPSAW}}>\beta_c^{\tx{IPDSAW}}$ indeed requires to check  that $\mathrm F^{\tx{NE}}(\beta_c^{\tx{IPDSAW}})>\beta_c^{\tx{IPDSAW}}$.
For simplicity we set  $\beta_c=\beta_c^{\tx{IPDSAW}}$. We recall the grand canonical characterization of the free energy, i.e.,
\begin{equation}\label{GC}
\mathrm F^{\tx{NE}}(\beta_c)-\beta_c=\inf\bigg\{\gamma>0\colon\;  \sum_{L\geq 1} Z_{L,\beta_c}^{\tx{NE}} e^{-(\beta_c+\gamma) L} <\infty\bigg\}
\end{equation}
and we observe that a generic NE-prudent path is a concatenation of partially directed path 
(see \eqref{defNE}) satisfying an additional geometric constraint called  \emph{exit-condition} (see Definition \ref{def:exit}). 
If we denote by $Z_{L,\beta_c}^{\tx{IPDSAW}}(*)$ the partition function of IPDSAW restricted to those configurations respecting the
\emph{exit-condition} and  if  we forget about the interactions between the partially directed subpaths constituting a NE-prudent path, we deduce that the inequality 
\begin{equation}\label{ineql}
\sum_{L\geq 4} Z_{L,\beta_c}^{\tx{IPDSAW}}(*)\,  e^{-\beta_cL} >1
\end{equation}
would be sufficient to claim that the l.h.s. in \eqref{GC} is positive. 
 Without the \emph{exit condition}, i.e., with  $Z_{L,\beta_c}^{\tx{IPDSAW}}$
instead of its restricted counterpart, the inequality \eqref{ineql} is true. This is a consequence 
of the random walk representation of IPDSAW displayed in 
\cite{GP13} which gives that  $\sum_{L\geq 4} Z_{L,\beta_c}^{\tx{IPDSAW}}\,  e^{-\beta_cL}=\infty$
because it equals the expected number of visits at the origin of a recurrent random walk on $\mathbb{Z}$. However, the  \emph{exit condition}
 imposed to every partially directed subpath constituting a NE-prudent path induces a strong loss of entropy and this is why
 we are not able to show that \eqref{ineql} also holds true.

 \smallskip

The second open question would complete the scaling limit of the prudent walk (at $\beta=0$). 
This problem has been investigated with combinatorial technics in, e.g.  \cite[Proposition 8]{B10} for the \emph{3-sided} prudent walk. In this case the scaling limit is a
straight line along the diagonal 
and it is conjectured that also the generic prudent walk displays the same
scaling limit. 
With probabilistic tools, the scaling limit of the (\emph{kinetic}) prudent walk was explored in  \cite{BFV10}.
We refer to \cite{BFV10} for the precise definition of the kinetic prudent walk, but let us 
emphasize that its scaling limit  is described by an explicit non trivial 
continuous process, cf. \cite[Theorem 1]{BFV10}.
 
 We may assume that inside its extended phase the scaling limit of 
IPSAW remains very similar to that of the prudent walk (at $\beta=0$). 
 From this perspective, it would be interesting to  get a better understanding of the geometry of IPSAW inside its collapsed phase as well. 
Since $F(\beta)=\beta$ when $\beta\geq \beta_c^{\text{IPSAW}}$, we can state that  the fraction of self-touching of a typical path is $1+o(1)$.
However,  there are various type of paths 
achieving 
this condition, e.g., the collapsed configurations of IPDSAW  (see \cite[Section 4]{CGP13}) or configurations filling a square box by turning around their range, and it is not clear at this stage which subclass would 
contribute the most to the partition function.

\smallskip

The third open question is  the most difficult.  The fact that one can not display a subset of parameters in $[0,\infty)$ inside which 
the free energy of ISAW becomes linear illustrates this  difficulty.  


%
%
 \begin{figure}[ht]
   \begin{subfigure}[t]{0.3\textwidth}
{\includegraphics[scale=1]{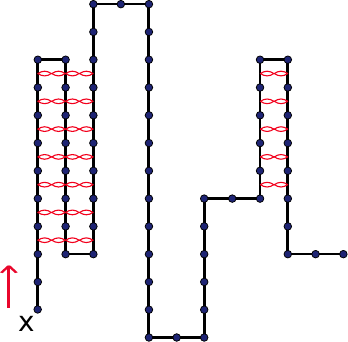}}
\caption{IPDSAW}\label{figa}
\end{subfigure}
 \begin{subfigure}[t]{0.3\textwidth}
{\includegraphics[scale=1]{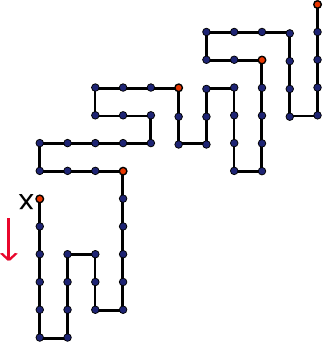}}
\caption{NE-IPSAW}\label{figb}
\end{subfigure}
 \begin{subfigure}[t]{0.3\textwidth}
{\includegraphics[scale=1.0]{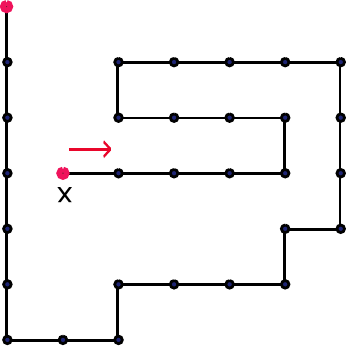}}
\caption{IPSAW}\label{figc}
\end{subfigure}
\caption{
Examples of  a PDSAW ($\text A$), NE-PSAW ($\text B$) and PSAW ($\text C$) path. 
Any path starts at $x$ and its orientation is 
given by the arrow.\\
In ($\text A$) we have drawn an IPDSAW path  
made of $11$ stretches: $\ell_1=9,\, \ell_2=-7,\, \ell_3=9
,\, \ell_4=0,\, \ell_5=-12,\, \ell_6=0,\, \ell_7 = 5,\, \ell_8=0,\, \ell_9=5, \,
\ell_{10}=-7, \ell_{11}=0$. That path performs 
 $19$ self-touching (drawn in red).}
\label{fig:IPDRW1}
\end{figure} 
%
%

%
%
%
%

\section{Decomposition of a generic prudent path}\label{decomp}

In this section we describe the different type of path that we will have to take into account in the paper. By order of increasing complexity, we will 
first introduce in Section \ref{sec:IPDSAW} the partially directed self-avoiding paths and their counterparts satisfying the 
so called \emph{exit condition} which is  an additional geometric constraints  allowing for their concatenation.
In section \ref{sec:NE-IPSAW}, we concatenate such partially directed paths to build the \emph{two-sided} prudent paths.
Those two sided path have $4$ possible general orientations; north-east (NE), north-west (NW), south-east (SE) and south-west (SW).
Finally in Section \ref{sec:IPSAWdef}, we will introduce the generic prudent path and observe that 
each such path can be decomposed in a unique manner into a succession of \emph{macro-blocks} that are particle case of two-sided prudent paths obeying some additional constraints given by the prudent condition to make possible their concatenation.

\smallskip

We need to define $\oplus$ a concatenation operator on prudent path. We  pick $r\in \N$ and we consider $r$  prudent paths denoted by $w_1,\dots,w_r$.  We 
let $w_1\oplus w_2\oplus\dots\oplus w_r$ be the path obtained by attaching the last step of $w_{i-1}$ with the first step of $w_i$ for every $2\leq i\leq r$. Then, the sequence $(w_1,\dots,w_r)$ is said to be \emph{concatenable} if 
$w_1\oplus \dots\oplus w_r$ itself is a prudent path. Finally, we extend the notation $\oplus$ to the concatenation of sets of prudent path. Therefore, if $(\mathcal{A}_i)_{i=1}^r$  are $r$ sets of paths such that any sequence in $\mathcal{A}_1\times \dots\times \mathcal{A}_r$ is concatenable, then  $\mathcal{A}_1\oplus \dots \oplus \mathcal{A}_r$ contains all paths obtained by concatenating sequences in   $\mathcal{A}_1\times \dots\times \mathcal{A}_r$.

\subsection{Partially directed self-avoiding walk (PDSAW)}
\label{sec:IPDSAW}
The  \emph{partially directed self-avoiding walk} is a random walk on $\mathbb Z^2$ whose increments are unitary and can take only three possible directions. For instance, when the increments of the path  are chosen in $\{\uparrow,\, \downarrow,\, \rightarrow\}$, then the path is  \emph{west-east} oriented.  By rotating an west-east path by $\pi/2$ radians we obtain a \emph{south-north} path, whose increments are 
chosen in  $\{\uparrow,\, \leftarrow,\, \rightarrow\}$, see Figure \ref{fig:IPDRW1bis} for two examples of such paths.
  By repeating  twice this rotation, we recover the  \emph{east-west} and the \emph{north-south} paths.  
  In what follows and for $L\in \N$,  the set of 
{west-east} partially directed  paths of length $L$ ({south-north}, {east-west},  {north-south} respectively)  
 will be denoted by $\Omega_{L,pd}^{\rightarrow}$ ($\Omega_{L,pd}^{\uparrow}$, $\Omega_{L,pd}^{\leftarrow}$, $\Omega_{L,pd}^{\downarrow}$ respectively).

\begin{definition}[Inter-stretch]
\label{def:interStretches}
We call  \emph{inter-stretch} every increment in the direction which gives the orientation of a given partially directed path. Therefore,
any partially directed path of finite length can be partitioned into $(N-1)$-\emph{inter-stretches} and $N$-\emph{stretches}, $(\ell_1,\dots,\ell_N)\in \mathbb Z^N$, for some $N\in\mathbb N$.
For $i\in \{1,\dots,N\}$, the modulus of $\ell_i$ gives the number of unitary steps composing the $i$-th stretch  and when $\ell_i\neq 0$,  the sign of $\ell_i$ gives its orientation.
In a west-east or east-west  path, we say that $\ell_i$ has a south-north orientation ($\uparrow$) if $\ell_i>0$ and north-south ($\downarrow$) if  $\ell_i<0$. 
In a south-north or north-south path, we say that $\ell_i$ has an west-east orientation ($\rightarrow$) if $\ell_i>0$ and east-west ($\leftarrow$) if  $\ell_i<0$ (see Figure \ref{fig:IPDRW1bis}).  Thus, e.g., 
$$
\Omega_{L,pd}^{\to}=
\bigcup_{N=1}^L\Big\{\ell=(\ell_i)_{i=1}^{N}\in \mathbb{Z}^{N} \colon\, N-1+|\ell_1|+\dots+|\ell_N|=L\Big\}.
$$
\end{definition}

\begin{remark}
In this paper we also take into account  those partially directed path with only one vertical stretch and zero inter-stretches (thus $N=1$ in Definition \ref{def:interStretches}). This is a slight difference with respect to \cite{CGP13}), in which $N\geq 2$.
\end{remark}

In Section \ref{sec:NE-IPSAW} we define the {two-sided path}. They are obtained by concatenating alternatively, e.g.,
some {west-east}  partially directed paths with  some {south-north} partially directed paths. 
 However, concatenating such oriented 
path requires an additional geometric constraint called \emph{exit-condition} which requires a proper definition.

\begin{definition}[Exit condition]
\label{def:exit}
Let $N\in \N$ and let $\ell=(\ell_1,\dots,\ell_N)\in \mathbb{Z}^N$ be an arbitrary 
sequence of stretches.  Then, $\ell$ satisfies the \emph{upper exit condition}
if its  last 
stretch finishes strictly above all other stretches, i.e.,
$$
 \ell_1+\cdots+\ell_N\, >\, \max_{0\leq i< N}\{\ell_1+\cdots+\ell_i\},$$
and $\ell$  satisfies the \emph{lower exit condition}
or if its last 
stretch finishes strictly below all other stretches, i.e.,
$$
 \ell_1+\cdots+\ell_N\, <\, \min_{0\leq i< N}\{\ell_1+\cdots+\ell_i\}.$$
 \end{definition}

\begin{definition}[Oriented blocks]
\label{def:OrientedBlock}
An arbitrary \emph{west-east} partially directed path  $(\ell_1,\ell_2,\dots,\ell_N)$  is  
called  \emph{upper oriented}  if 
its first stretch is negative and if it obeys the \emph{upper exit condition} (see Figure \ref{fig:IPDRW1bis} (A)). 
Otherwise, it is called  \emph{lower oriented} if its first stretch is positive and if it obeys the \emph{lower exit condition}.
We denote by $\mathcal{O}_L^{\to,+}$  the set of \emph{upper west-east} oriented blocks of size $L$   and by  and by $\mathcal{O}_L^{\to,-}$   the set of 
\emph{lower west-east} oriented blocks, i.e.,  
 \begin{align}\label{defoo}
 \mathcal{O}_L^{\to,+}&:=\{\ell \in \Omega_{L,pd}^{\to}\colon \ell_1<0\ \text{and $\ell$ satisfies the \emph{upper exit condition}}\},\\
  \mathcal{O}_L^{\to,-}&:=\{\ell \in \Omega_{L,pd}^{\to}\colon \ell_1>0\ \text{and $\ell$ satisfies the \emph{lower exit condition}}\}.
 \end{align}
We define analogously  the sets $\mathcal{O}_L^{\uparrow,+}$ and $\mathcal{O}_L^{\uparrow,-}$ of \emph{upper south-north} oriented blocks and of \emph{lower south north} oriented blocks, respectively, and so on.

We stress that for satisfying the exit condition it must hold that $N\geq 2$, i.e., we need at least two stretches. 

 \end{definition}

\subsection{Two-sided prudent path}
\label{sec:NE-IPSAW}

With the oriented blocks (recall definition \ref{def:OrientedBlock}) in hand, we can 
define a larger class of prudent paths: the {2-sided} prudent paths, which ultimately will constitute the  building bricks of the prudent path.  
Those $2$-sided prudent path have a general orientation that 
can be \emph{north-east} (NE), \emph{north-west} (NW), \emph{south-west} (SW) or \emph{south-east} (SE).
In the rest of the section we focus on NE-prudent path, but all definitions we give can easily be adapted 
to consider a generic oriented (NE, NW, SE, SW) prudent self-avoiding path.

As mentioned above, {north-east} prudent path  are obtained by considering a family of 
{west-east} oriented blocks and a family of 
{south-north} oriented blocks and by concatenating them alternatively.  

%
%
%
%


\smallskip

\label{Sec:OriBlocks}

  \begin{figure*}
  \begin{subfigure}[t]{0.35\textwidth}
\includegraphics[scale=1.2]{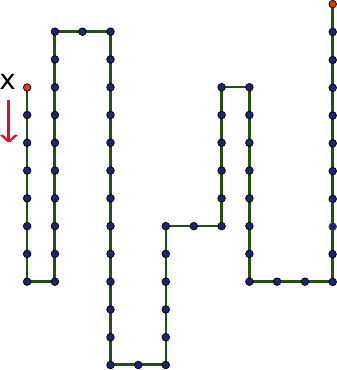}
\caption{A west-east block.}
\end{subfigure}
  \begin{subfigure}[t]{0.35\textwidth}
\includegraphics[scale=1.2]{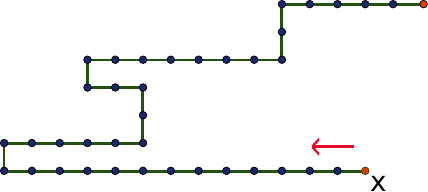}
\caption{A south-north block.}
\end{subfigure}
\caption{
The west-east oriented block ($\textrm A$) is made of  $12$ stretches 
and  is upper oriented since $\ell_1<0$ and $\ell_1+\dots+\ell_{12}>\displaystyle\max_{1\leq i\leq 11}\{\ell_1+\dots +\ell_i\}$. \\
Analogously, the south-north block ($\textrm B$) is upper oriented as well.}
\label{fig:IPDRW1bis}
\end{figure*} 

%
 
 
 
 
\begin{definition}[NE-prudent path]
\label{def:NEPrudentPath}
To define a {NE-prudent self-avoiding path} of length $L\in \N$ we consider $r\in \N$ oriented blocks, $(\pi_1,\dots, \pi_r)$, of length $t_1,\dots , t_r$ respectively, with $t_1+\cdots + t_r=L$ and $t_i\geq 4$. 
We assume that those blocks indexed by odd integers are either all {upper west-east} oriented (in which case all blocks 
indexed by even integers are {upper south-north} oriented) or all {upper south-north} oriented (in which case all blocks 
indexed by even integers are {upper west-east} oriented). 
In definition \ref{def:OrientedBlock} we have imposed that an {upper} oriented block starts with a negative stretch but this constraint can be relaxed for $\pi_1$ (the 
first oriented block of the sequence). We have also imposed that an {upper} oriented block satisfies the {upper  exit condition}  but  this constrain can be relaxed for $\pi_r$ (the last block of the sequence).  See Figure \ref{fig:IPDRW2} for an example of a NE-prudent path 
with these $2$ constraints relaxed. 
Then, we concatenate $\pi_1,\dots,\pi_r$ (which is possible because the first $r-1$ blocks satisfy the exit condition) 
and the resulting path  is denoted by $\pi_1\oplus\dots\oplus \pi_r$. We call such path a \emph{NE-prudent self-avoiding path}, see Figure \ref{fig:IPDRW2}. The sequence $(\pi_1,\dots, \pi_r)$ is called the \emph{block decomposition} of the path and it is unique.
\end{definition}

We now provide a formal definition of $\Omega_L^{\tx{NE}}$:
\begin{align}\label{defNE}
\Omega_L^{\tx{NE}}=& \bigcup_{ r\in 2\N} \bigcup_{t_1+\dots+t_r=L} \big[ \mathcal{O}_{t_1,*}^{\to,+} \oplus  \mathcal{O}_{t_2}^{\uparrow,+}
\oplus \dots\oplus \mathcal{O}_{t_{r-1}}^{\to,+}\oplus \mathcal{O}_{t_r, \square}^{\uparrow,+}\Big]
\cup \Big[\mathcal{O}_{t_1,*}^{\uparrow,+} \oplus  \mathcal{O}_{t_2}^{\to,+}
\oplus \dots\oplus \mathcal{O}_{t_{r-1}}^{\uparrow,+} \oplus \mathcal{O}_{t_r,\square}^{\to,+} \Big]\\
\nonumber & \cup \ \bigcup_{ r\in 2\N-1} \bigcup_{t_1+\dots+t_r=L} \big[ \mathcal{O}_{t_1,*}^{\to,+} \oplus  \mathcal{O}_{t_2}^{\uparrow,+}
\oplus \dots\oplus \mathcal{O}_{t_{r-1}}^{\uparrow,+} \oplus \mathcal{O}_{t_r,\square}^{\to,+}\Big]
\cup  \Big[\mathcal{O}_{t_1,*}^{\uparrow,+} \oplus  \mathcal{O}_{t_2}^{\to,+}
\oplus \dots\oplus \mathcal{O}_{t_{r-1}}^{\to,+} \oplus \mathcal{O}_{t_r,\square}^{\uparrow,+} \Big],
\end{align}
where the notations $\mathcal{O}_{t,*}^{\, \cdot\, ,+}$  means that the condition $\ell_1<0$ has been removed from \eqref{defoo}
 and $\mathcal{O}_{t,\square}^{\, \cdot\, ,+}$ means that the exit condition has been removed from \eqref{defoo}.

\begin{figure}
\includegraphics[scale=1.3]{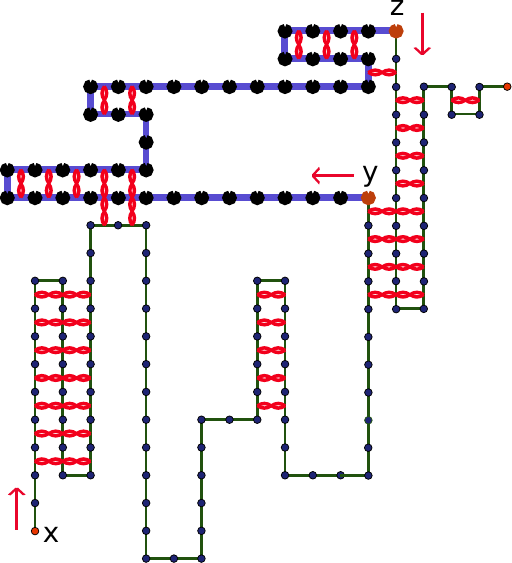}
\caption{A NE-PSAW path made of  three blocks: 
the first and the third blocks are west-east (in green) and the second block is south-north (in blue). 
The first block starts at $x$, the second block starts at $y$ and the third block starts at $z$. Their orientation 
is given by the arrow.
Interactions in each block and between different blocks are highlighted in red. }
\label{fig:IPDRW2}
\end{figure}

\begin{remark}
\label{rem:IPDSAW-IPSAW}
Let us observe that indeed $\Omega_{L,pd}^\to$ and $\Omega_{L,pd}^\uparrow$  are  NE-prudent self-avoiding walk. It corresponds to the case in which we have only one block, i.e., $r=1$.  
\end{remark}

\smallskip

\begin{figure}
\includegraphics[scale=1.2]{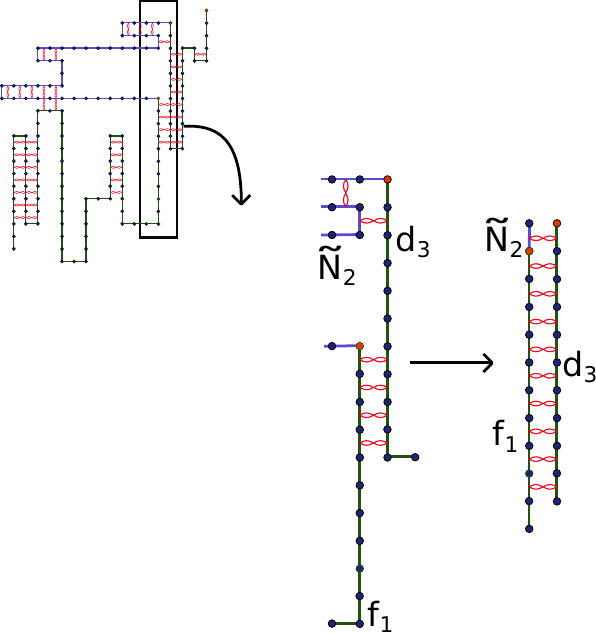}
\caption{On the left, a NE-PSAW path made of three blocks. 
In the picture we zoom in on the interactions between the third block and the 
rest of path. 
We recall that the third block can only interact with its two preceding blocks, i.e., the first and the second one.
We call $f_1$ the last vertical stretch of the first block and
$d_3$ the first vertical stretch of the third block.
 The interactions between the first and the third blocks involve   $f_1$ and $d_3$ while the interactions 
 between the second and the third blocks involve $d_3$ and  $\tilde N_2$ (the number of inter-stretches in the second block that 
 may truly interact with $d_3$, on the picture $\tilde N_2=1$).
Such interactions are bounded above by $(\tilde N_2 + f_{1}) \vl d_{3}$. }
\label{fig:IPDRW3}
\end{figure}

\subsection{Interacting prudent self-avoiding walk}
\label{sec:IPSAWdef}
In this section we show how a general prudent path can be decomposed in a unique manner into a 
sequence of {2-sided} prudent paths called {macro-blocks}. There is a difference between the decomposition of a two-sided path into 
oriented blocks and that of a generic prudent path into macro-blocks. We have indeed seen in Section \ref{def:NEPrudentPath} above that 
the exit condition, which is an intrinsic constraint, was sufficient to make sure that 
oriented blocks alternatively {west-east} and {south-north} are concatenable. However, to make sure that a given family of 
2-sided prudent paths is concatenable, one can not rely on some intrinsic geometric constraint anymore. Such a family must indeed satisfy a global constraint, that is, 
each 2-sided prudent path has to satisfy the prudent condition with the all path it will be attached to
and this condition is not intrinsic anymore,  see Figure \ref{fig:IPDRW9}. 

%
 We recall that a walk is said to be \emph{prudent} if none of its steps point in the direction of its range.
 In the sequel we refer to this constraint as the \emph{prudent condition}.

\subsubsection{Macro-block decomposition}
\label{sec:macrobloks}
Let us start by noticing that a prudent walk can be viewed as a sequence of 
NE, NW, SE, SW two-sided sub-paths that we will call  \emph{macro-block}, see Figure \ref{fig:IPDRW9}.   

 \begin{definition}\label{def:ThetaLL}
For very $m,L\in \mathbb N$ we denote by $\Theta_{m,L}$ the set gathering all concatenable sequences of $m$ {two-sided} paths such that the cumulated length of the {two-sided} paths in the sequence is $L$ and such that:
\begin{enumerate}
\item  two consecutive {two-sided} paths in the sequence do not have the same orientation,
\item  the first $m-1$ {two-sided} paths in the sequence contain at least $2$ oriented blocks.
\end{enumerate}
For the ease of notation, we recall \eqref{defNE} and we let 
$\Omega_{L,\triangle}^{\tx{NE}}$ be the set of \emph{north-east} prudent path containing at least two oriented blocks (the same definition holds with the $3$ others possible orientations of a \emph{two-sided} path). 
Thus, 
\begin{align}\label{THETAml}
\Theta_{m,L}=\hspace{-.2cm}\bigcup_{t_1+\dots+t_m=L}\  \bigcup_{\substack{(x_i)_{i=1}^m\in \{\tx{NE}, \tx{NW}, \tx{SE}, \tx{SW}\}\\
x_{i-1}\neq x_{i},\ i\leq r}} 
\begin{aligned}
\Big\{(\Lambda_1,\dots,\Lambda_m)\in\, \Omega_{t_1,\triangle}^{x_1}\times \dots & \times\Omega_{t_{m-1},\triangle}^{x_{m-1}}\times \Omega_{t_m}^{x_m}\colon \\
 &(\Lambda_1,\dots,\Lambda_m)\   \text{is concatenable}\, \Big\}
 \end{aligned}
\end{align}
\end{definition}
Finally, we observe that any prudent path of length $L$ can be decomposed into a sequence of macro-blocks in  $\cup_{m\geq 1} \Theta_{m,L}$   and moreover, thanks to 
the conditions (1) and (2) in Definition \ref{def:ThetaLL} we can assert that such decomposition is unique. 
Therefore, we may partition $\Omega_L^{\tx{PSAW}}$ as 
\begin{align}\label{eq:OmegaPsaw}
\Omega_L^{\tx{PSAW}}=\cup_{m\geq 1} \{\Lambda_1\oplus\dots\oplus \Lambda_m \colon (\Lambda_1,\dots,\Lambda_m)\in \Theta_{m,L}\}
\end{align}
An example of such decomposition is provided in Figure \ref{fig:IPDRW9}. 
 


\subsubsection{Upper bound on the number of \emph{macro-block} in the decomposition of a generic prudent path.}
\label{sec:macrobloks}
The prudent condition imposes strong constraints on the number of macro-block composing the path:
if we consider the smallest rectangle embedding the whole path, then whenever the random walk wants to start a new
 macro-block, it must cross the whole rectangle in one direction and in such direction the length of the rectangle is increased  by at least one unit.
Therefore the longer it is the path, the harder (expensive) it becomes to start a new macro-block. In Lemma \ref{lemma:macroblocknumber} we provide an upper bound on the number of macro-blocks 
in a prudent path of a given length.

\begin{figure}
\includegraphics[scale=0.75]{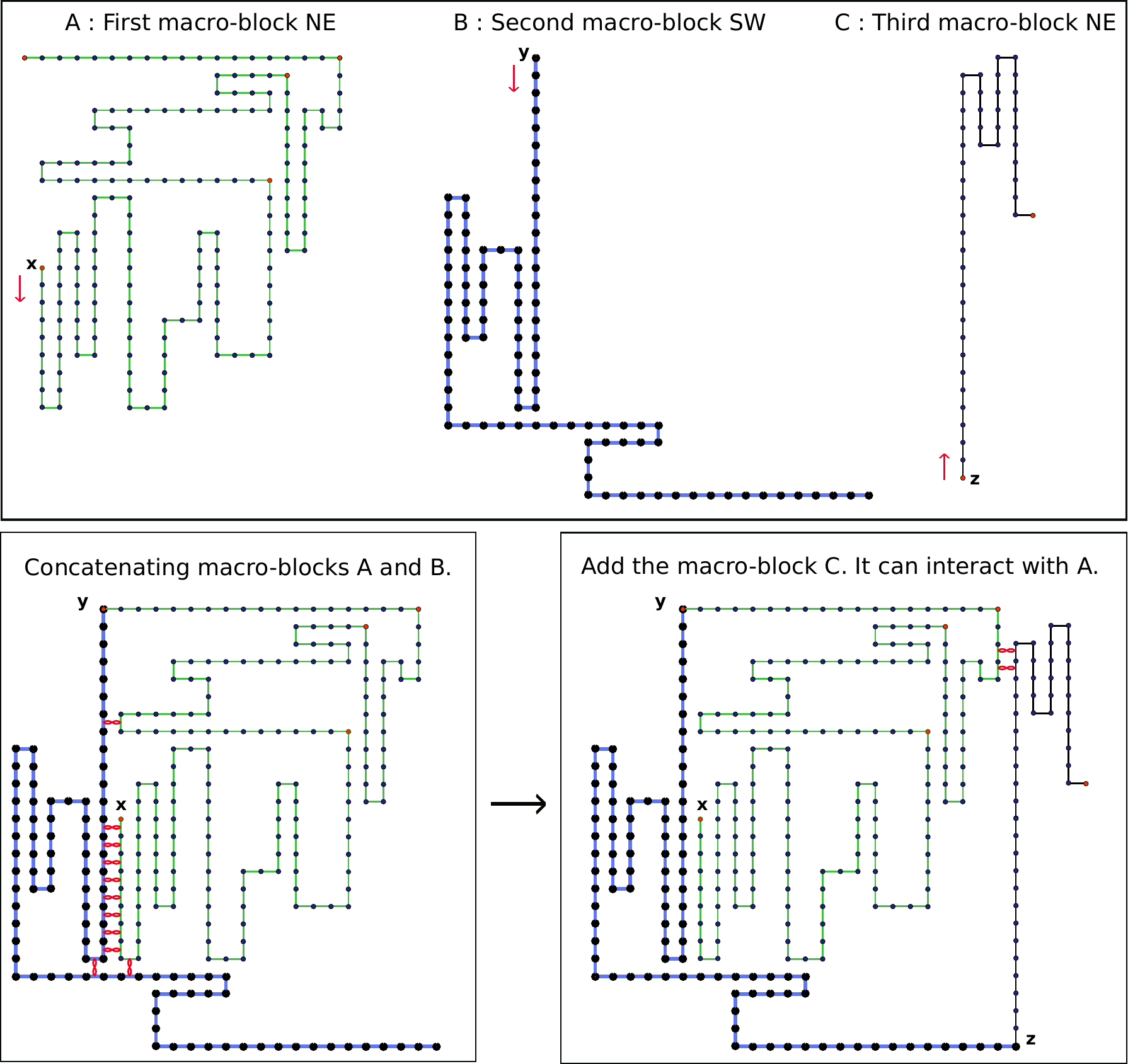}
\caption{Decomposition of a prudent walk into macro-blocks.  
In the picture we have a sequence of three macro-blocks, $A, B,\, \text{and}\, C$. The first macro-block (A) has a NE-orientation. The second block (B) has a SW-orientation and it is \emph{compatible} with the first macro-block, that is, the \emph{prudent condition} is satisfied. 
This allows us to  concatenate A with B.
The third macro-block (C) has a NE-orientation and it satisfies the compatibly condition with  $A\oplus B$.
The interaction between macro-blocks are highlighted in red.
}
\label{fig:IPDRW9}
\end{figure}

\begin{lemma}
\label{lemma:macroblocknumber}
Let $L$ be the path length. Then the number of macro-blocks composing the path is bounded from above by $\mathcal{O}(\sqrt{L})$.
\end{lemma}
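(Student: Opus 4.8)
The plan is to exploit the geometric intuition already sketched in the paragraph preceding the lemma: whenever the prudent walk finishes one macro-block and starts a new one, it must exit the smallest rectangle currently enclosing its range, and the direction in which it exits has its rectangle-side length permanently increased by at least one unit. Since the total length of the path is $L$, each side of the enclosing rectangle has length at most $L$, hence at most $2L$ units of "rectangle perimeter" can ever be consumed; this already gives a crude $\mathcal{O}(L)$ bound, and the point is that each new macro-block actually costs a number of steps comparable to the current rectangle dimension, which forces the $\sqrt{L}$ scaling. So first I would set up notation: write the macro-block decomposition $w = \Lambda_1 \oplus \dots \oplus \Lambda_m$ from \eqref{eq:OmegaPsaw}, with $t_k$ the length of $\Lambda_k$, and for $k = 1,\dots,m$ let $R_k$ denote the smallest axis-parallel rectangle containing $\Lambda_1 \oplus \dots \oplus \Lambda_k$, with horizontal and vertical side lengths $a_k$ and $b_k$. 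The endpoint of $\Lambda_k$ lies on the boundary of $R_k$ (indeed at a corner of it, by the two-sided structure together with the prudent condition).

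The key step is to show that $t_{k+1} \geq \max\{a_k, b_k\}$, or at least $t_{k+1} \geq c\,(a_k + b_k)$ for some constant $c > 0$. The reasoning: because $\Lambda_k \oplus \Lambda_{k+1}$ is concatenable and the orientations differ (condition (1) in Definition \ref{def:ThetaLL}), the walk at the start of $\Lambda_{k+1}$ must travel away from the range of $\Lambda_1\oplus\dots\oplus\Lambda_k$ in a new general direction, which by the prudent condition forces it to traverse essentially the full extent of $R_k$ in one of the two coordinate directions before it can turn; combined with the requirement $t_{k+1}\geq 4$ (so no macro-block is degenerate) and the fact that a two-sided path of length $t_{k+1}$ cannot have coordinate-extent exceeding $t_{k+1}$, one reads off $t_{k+1} \gtrsim \max\{a_k,b_k\}$. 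Simultaneously, since $\Lambda_{k+1}$ is itself contained in $R_{k+1}$ and reaches a new corner, we get $a_{k+1} + b_{k+1} \geq a_k + b_k + 1$ (the rectangle strictly grows in at least one direction), and more usefully $a_{k+1} + b_{k+1} \geq (a_k + b_k) + (\text{extent of }\Lambda_{k+1}\text{ in its growth direction})$. Putting these together: on the one hand $\sum_{k=1}^m t_k = L$, and on the other hand $t_{k+1} \gtrsim a_k + b_k$ while $a_k + b_k$ is nondecreasing and $a_k + b_k \geq k - 1$ once the rectangle has started growing (each new macro-block adds at least one). Hence $L = \sum_k t_k \gtrsim \sum_{k=2}^{m} (a_{k-1}+b_{k-1}) \gtrsim \sum_{k=2}^m (k-1) \asymp m^2$, which yields $m = \mathcal{O}(\sqrt{L})$.

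The main obstacle I anticipate is the careful justification of the inequality $t_{k+1} \gtrsim a_k + b_k$: one must argue precisely, using the definition of the two-sided (NE, NW, SE, SW) orientations together with the prudent condition relative to the already-drawn range $\Lambda_1\oplus\dots\oplus\Lambda_k$, that the new macro-block genuinely has to sweep across the bulk of the current rectangle before it may bend, rather than hugging one corner with only $O(1)$ steps. This is where the compatibility ("prudent with the whole preceding path") condition from Figure \ref{fig:IPDRW9} does the real work, and it requires a short case analysis over the eight ordered pairs of distinct orientations $(x_k, x_{k+1})$. Everything else — the telescoping $a_k + b_k \geq k-1$, the monotonicity, and the final summation $\sum (k-1) \asymp m^2$ — is routine once that estimate is in place. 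I would also remark that the constant in $\mathcal{O}(\sqrt L)$ is explicit (something like $\sqrt{2L}+1$ if one is careful), though only the order matters for the later use of the lemma.
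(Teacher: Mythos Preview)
Your geometric setup is exactly the one the paper uses: the bounding rectangles $R_k$ with side lengths $a_k,b_k$, the observation that each change of macro-block forces the walk to traverse the current rectangle in one coordinate direction, and the monotone growth of the rectangle. The problem is the inequality you single out as the ``key step'': $t_{k+1}\gtrsim \max\{a_k,b_k\}$ (or $\gtrsim a_k+b_k$) is too strong and in general false. You yourself write that the prudent condition forces the walk to traverse $R_k$ \emph{in one of the two coordinate directions}; but that only yields $t_{k+1}$ bounded below by the side length in the direction actually crossed, which may well be the shorter side. Concretely, if $R_k$ is very tall and thin (say $a_k=2$, $b_k$ large), the next macro-block may only need to cross the width before it can change orientation, so $t_{k+1}$ is $O(1)$ while $a_k+b_k$ is large. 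With this, the chain $L=\sum t_k \gtrsim \sum (a_{k-1}+b_{k-1}) \gtrsim \sum (k-1)$ collapses at its first link.

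The repair --- and this is precisely what the paper does --- is to keep track of the two crossing directions separately rather than lumping them into $a_k+b_k$. Let $n_v$ (resp.\ $n_h$) be the number of macro-block transitions effected by a vertical (resp.\ horizontal) crossing. Each vertical crossing costs at least the current height, and each vertical crossing increases the height by at least one; hence the $i$-th vertical crossing costs at least $i$, and likewise for horizontal crossings. Summing gives
\[
\sum_{i=1}^{n_v} i \;+\; \sum_{j=1}^{n_h} j \;\le\; L,
\]
so $n_v^2+n_h^2\le 2L$ and therefore $m\le n_v+n_h+1\le 2\sqrt{L}+1$. Your summation $\sum(k-1)\asymp m^2$ is morally the diagonal case $n_v=n_h$ of this, but the separation into $n_v$ and $n_h$ is what makes the lower bound on each crossing cost legitimate. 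The case analysis over orientation pairs that you flag as the main obstacle is not actually needed once you phrase things this way.
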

\begin{proof}
Pick $w\in \Omega_L$, and let $r$ be the number of macro-blocks in $w$. For $j \in \{1\dots,r\}$, we denote by
$R_j$ the smallest rectangle containing the first  $j$ macro-blocks of $w$.  In order to complete the $j$-th macro-block and to start a new one, the path should either 
cross $R_j$ horizontally and increase the width of $R_j$ by at least $1$ or vertically and increase the height of 
$R_j$ by at least $1$. Therefore, we define $n_v$ the number of times that a macro-block ends
with a vertical cross, and $n_h$ its horizontal counterpart. As a consequence, by keeping in mind that $w$ has length $L$, it must hold that
\begin{equation}\label{length}
\sum_{i=1}^{n_v} i+ \sum_{j=1}^{n_h} j\leq L.
\end{equation} 
From \eqref{length} it comes that $n_v (n_v+1)+n_h (n_h+1)\leq 2L$ and therefore
$n_v^2+n_h^2\leq 2L$. Under such condition, the quantity $n_v+n_h$ is maximal
when $n_v=n_h=\sqrt{L}$. Thus, the number of macro-blocks made by $w$ is not larger than 
$2 \sqrt{L}$.

\end{proof}


\section{Proof of Theorem \ref{Thm1}}
\label{proof of Thm1}

In this section we prove Theorem \ref{Thm1} subject to Theorem \ref{thh1} which ensures that $\mathrm F^{\tx{NE}}(\beta)=\mathrm F(\beta)$ for any $\beta\in [0,\infty)$. Therefore it is sufficient  to prove Theorem \ref{Thm1} for NE-PSAW.  
Theorem \ref{thh1} will be proven in Section \ref{proof of thh1}.

\smallskip

We consider the free energy of NE-IPSAW
\begin{equation}
\mathrm F^{\tx{NE}}(\beta):=\lim_{L\to\infty} \frac{1}{L}\log \mathrm Z_{\beta,\, L}^{\tx{NE}}.
\end{equation}
In Section \ref{app:freeenergy} we prove that this limit exists and is finite.
Let us observe that, by Remark \ref{rem:IPDSAW-IPSAW}, $\mathrm F^{\tx{NE}}(\beta)\geq \mathrm F^{\tx{IPDSAW}}(\beta)$, thus it follows that  $\mathrm F^{\tx{NE}}(\beta)\geq \beta$ cf. (1.9) in \cite{CGP13}. 
To complete the proof of Theorem \ref{Thm1} we have to show that there exists a $\beta_0$ such that 
$\mathrm Z_{\beta,\, L}^{\tx{NE}}\leq e^{\beta (L+o(L))}$ for any $\beta \geq \beta_0$ and $L\in \mathbb N$.
To that purpose we disintegrate   the partition function $\mathrm Z_{\beta,\, L}^{\tx{NE}}$ by 
using the decomposition of any $L$-step NE-PSAW path $\pi$  into a family of oriented blocks $(\pi_1,\dots, \pi_r)$
with $r\leq L/4$ (cf. Definition \ref{defNE}).  As displayed in \eqref{defNE}, we can distinguish between 4 types of  NE-PSAW paths
depending on the orientation of their first and last oriented block. For
simplicity we will only consider  $\widehat Z_{\beta,\, L}^{\, \, \tx{NE}}$ which is computed by restricting the partition function to those paths starting with a west-east block and ending with a south-north block 
(this corresponds to the first decomposition in \eqref{defNE}). The contribution to $Z_{\beta,\, L}^{\tx{NE}}$ of those path satisfying one of the  
$3$ other possible decompositions in \eqref{defNE} are handled similarly. Therefore,
\begin{align}\label{defNEE}
\widehat Z_{\beta,L}^{\, \, \tx{NE}}&=\sum_{ r\in 2\N} \sum_{t_1+\dots+t_r=L} \sum_{(\pi_1,\dots,\pi_r)\in \mathcal{O}_{t_1,*}^{\to,+} \times \,  \mathcal{O}_{t_2}^{\uparrow,+}
\times \dots\times\,   \mathcal{O}_{t_{r-1}}^{\to,+}\times \,  \mathcal{O}_{t_r, \square}^{\uparrow,+}} \exp\bigg\{\, \beta\sum_{j=1}^r H(\pi_i)+\beta \ \Phi(\pi_1,\dots,\pi_r)\bigg\},
\end{align}
where $\Phi(\pi_1,\dots,\pi_r)$ is a suitable function that takes into account the interactions between different oriented blocks, i.e.,
counts the number of self-touchings involving monomers belonging to two different oriented blocks.

Henceforth, for every $i\in \{1,\dots,r\}$ we let $d_i$ (respectively $f_i$) be the first stretch (resp. last stretch) of $\pi_i$ 
and we let $N_{i}$ be the number of stretches constituting $\pi_i$.
We note that  $\phi(\pi_1,\dots,\pi_r)$ can be computed by summing for $i=1,\dots, r-1$  the number of self-touchings between 
$\pi_{i+1}$ and the sub-path $\pi_1 \oplus\dots \oplus \pi_{i}$. Moreover,
the prudent condition implies that $\pi_{i+1}$ can interact with  
 $\pi_1 \oplus\dots \oplus \pi_{i}$ only through $\pi_{i-1}$ and $\pi_i$. 
To be more specific (see Figure  \ref{fig:IPDRW3}), the self-touchings between $\pi_i$ and $\pi_{i+1}$ may only happen between $d_{i+1}$ (the first stretch of $\pi_{i+1}$)  and some of the inter-stretches of $\pi_i$ (whose number is denoted by $\tilde N_i$), while the self-touchings between $\pi_{i-1}$ and  $\pi_{i+1}$   may only happen between $d_{i+1}$ and $f_{i-1}$ (the last stretch of $\pi_{i-1}$).
Of course, for every $i\in \{0,\dots,r-1\}$, the number of inter-stretches in $\pi_i$ that may interact with $d_{i+1}$  is not larger than  the number of inter-stretches in $\pi_i$,  i.e., $\tilde N_{i}\leq N_{i}-1$.
By assigning to $\tilde N_i$ the same sign as $f_{i-1}$, we can check without further difficulty (see Figure \ref{fig:IPDRW3}) that the number of 
self-touchings between $\pi_{i-1},\pi_i$ and $\pi_{i+1}$ is bounded from above by 
 \begin{equation*}
 (\tilde N_{i} + f_{i-1}) \vl d_{i+1},
 \end{equation*}
 where the $\vl$ operator is defined in \eqref{eq:defVL} below.
 We stress again that $\tilde N_{i}$ and  $f_{i-1}$ have the same sign, while $d_{i+1}$ has the opposite orientation. 
 By using the definition of $\vl$ in \eqref{eq:defVL} and the triangle inequality, we have the following inequality  for every $c\in (0,1/2)$, i.e.,    \begin{equation}\label{eq2}
(\tilde N_{i} + f_{i-1}) \vl d_{i+1}\, \leq \,
 \frac{1}{2}|d_{i+1}| + \frac{1}{2}|f_{i-1}| + \left(\frac{1}{2}+c\right)| N_{i}-1 | - c\, |f_{i-1} + d_{i+1}|, 
\qquad i=1,\cdots,r-1,
\end{equation}
where $f_0=0$ by definition. It turns out that the value of $c$ is worthless: in the sequel we choose $c=1/4$.
We use \eqref{eq2} to conclude that
\begin{equation}\label{eq:InterEst1}
e^{\,\beta\,  \Phi(\pi_1,\dots,\pi_r)}\leq e^{\, \frac{\beta}{2}\left(|d_2|+\cdots+|d_r|\right)}e^{\, \frac{\beta}{2}\left(|f_1|+\cdots+|f_{r-2}|\right)}
e^{\, \frac{3}{4}\,\beta\, \left(N_1+\cdots+N_{r}-r\right)}
e^{\, -\frac{\beta}{4}\left(|f_0+d_2|+\cdots + |f_{r-2}+d_r|\right)}.
\end{equation}

At this stage, we let $\mathrm{Q}_{\beta, t,d,f,N}$ be the partition function associated with those oriented blocks made of 
$N$ stretches $(\ell_1,\dots,\ell_N)$, of total length $t$,   
starting with a stretch $\ell_1=d$, finishing with a stretch $\ell_N=f$. 
Since $\mathrm{Q}_{\beta, t,d,f,N}$ is a partition function involving partially directed paths only, we can use  
the Hamiltonian representation displayed in \cite{CGP13} with the help of the operator 
 $\vl$: for any pair $(x, y)\in \mathbb Z^2$ we let 
\begin{equation}\label{eq:defVL}
x \vl y:= \frac{1}{2}\big(\,|x|+|y|-|x+y|\, \big)
=\begin{cases}
\min\{|x|,|y|\}, &\, \text{if}\,\, xy <0,\\
0, &\, \text{otherwise}.
\end{cases}
\end{equation} 
 In such a way for a given sequence of $N$-stretches, $(\ell_1,\dots,\ell_N)$, the Hamiltonian in \eqref{eq:ham1} becomes
\begin{equation}
\label{eq:ham12}
\textrm H\big((\ell_1,\dots,\ell_N)\big)={\,\sum_{i=1}^{N-1}\, (\ell_{i} \vl \ell_{i+1})}.
\end{equation}
Since we are looking for an upper bound on  $\widehat  Z_{\beta,\, L}^{\,\,\tx{NE}}$, we forget about the exit condition
 that a block must satisfy (cf. Definition \ref{def:exit}) and we define $\mathrm{Q}_{\beta,\, t,d,f,N} $, on 
 $\mathcal{L}_{N,t}$, the set of all partially-directed paths of length $t$ with $N-1$
  inter-stretches. To be more specific, for $N\in\mathbb N$ we let
\begin{equation}
\label{eq:Lnl}
\begin{split}
&\mathcal{L}_{N,t}\, :=
 \left\{\ell=(\ell_1,\dots, \ell_N)\, :\, \sum_{i=1}^N\, |\ell_i|=t-N+1 \right\},
\end{split}
\end{equation}
and we define
\begin{equation}
\label{eq:deQ}
\mathrm{Q}_{\beta, t, d,f,N} \, := \, 
 \sum_{\substack{\ell \in \mathcal{L}_{N,t} \\ \ell_1=d, \ell_N=f}}  \exp\left\{\, \beta\, \sum_{n=1}^{N-1}\, (\ell_n \vl \ell_{n+1})\right\}.
\end{equation}
It follows that an upper bound on $ \widehat Z_{\beta,\, L}^{\,\,\tx{NE}}$ can be obtained from \eqref{defNEE}. To that aim,  for a given $r\in\{1,\dots, L/4\}$ 
and  $t_1+\dots+t_r=L$, we rewrite the 
inner summation in \eqref{defNEE} 
depending on the value taken by $(d_i,f_i,N_i)$ for $i\in \{1,\dots,r\}$.  We recall that $d_i<0$ for $i\geq 2$ and we lighten the notation with
$$
\Xi_{(t_1,\dots,t_r)}=\Big\{
(d_i,f_i, N_i)_{i=1}^r \colon |d_i|+|f_i|+N_i-1\leq t_i, \,\, d_i<0\, \forall\, i\geq 2,\,\, N_i \geq 2\, \forall\, i\neq r\,
\Big\},$$
where the $(t_1,\dots,t_r)$-dependency of  $\Xi$ may be omitted when there is no risk of confusion.
We  plug \eqref{eq:InterEst1} inside \eqref{defNEE} to obtain
\begin{equation}\label{eq:Zest1}
\begin{split}
\widehat Z_{\beta,\, L}^{\,\, \tx{NE}}\,& \leq \,
\sum_{r=1}^{L/4} \sum_{{t_1+\cdots+t_r=L}} \sum_{(d_i,f_i,N_i)_{i=1}^r\in \Xi}
\left(\prod_{i =1}^r\, \mathrm{Q}_{\beta,\, t_i,d_i,f_i,N_i}\right) \, \times
\\
& \qquad
e^{\, \frac{\beta}{2}\left(|d_2|+\cdots+|d_r|\right)}e^{\, \frac{\beta}{2}\left(|f_1|+\cdots+|f_{r-2}|\right)}
e^{\, \frac{3}{4}\,\beta\, \left(N_1+\cdots+N_{r}-r\right)}
e^{\, -\frac{\beta}{4}\left(|f_0+d_2|+\cdots + |f_{r-2}+d_r|\right)}
\end{split}
\end{equation}


\begin{remark}
According to Definition \ref{def:OrientedBlock} and \ref{def:exit}, we want to stress that 
$\pi_r$, 
the last block of the path, can have zero inter-stretches, i.e., it may happen that 
$N_r=1$. For the other blocks, $\pi_1,\dots,\pi_{r-1}$, $N_i$ must be larger or equal to $2$, because the exit condition (cf. Definition \ref{def:exit}) implies that each such block  contains at least two stretches.
\end{remark}
%

With the help of \eqref{eq:defVL}  we can rewrite $\mathrm{Q}_{\beta,\, t,d,f,N}$ in \eqref{eq:deQ}   as 
\begin{equation}
\label{eqdef:QfdtN}
\begin{split}
\mathrm{Q}_{\beta,\, t,d,f,N} \, &
 = \, \sum_{\substack{\ell \in \mathcal{L}_{N,t}\\ \ell_1=d, \ell_N=f}} 
\exp\left\{\, \beta\, \sum_{n=1}^{N}\, |\ell_n|\, - \frac{\beta}{2}\, 
\sum_{n=1}^{N-1}\, |\ell_n+\ell_{n+1}|\, -\frac{\beta}{2}|f| -\frac{\beta}{2}|d| \right\}.
\end{split}
\end{equation}
Recall \eqref{eq:Lnl}. For every $\ell\in \mathcal L_{N,t}$, the equality  $\sum_{n=1}^{N}\, |\ell_n|= t-N+1$ can be plugged into 
\eqref{eqdef:QfdtN} to obtain
\begin{equation} 
\label{eqdef:QfdtLastLine}
\mathrm{Q}_{\beta,\, t,d,f,N} \, 
= \, e^{\, \beta\, \left(\, t-N+1 -\frac{1}{2}|f| -\frac{1}{2}|d| \, \right)} \, 
\sum_{\substack{\ell \in \mathcal{L}_{N,t}\\ \ell_1=d, \ell_N=f}} 
\prod_{n=1}^{N-1}\exp\left\{ - \frac{\beta}{2} \,  |\ell_n+\ell_{n+1}| \right\}.
\end{equation}  
According to the method used in \cite[Section 2.1]{CGP13}, the r.h.s. of \eqref{eqdef:QfdtLastLine} admits a probabilistic representation.  
Let us introduce a random walk $V:=(V_i)_{i\in \N}$ with i.i.d. increments  $(U_i)_{i\in\mathbb N}$ following a discrete Laplace distribution, i.e., 
\begin{equation}
\label{eq:def_V}
\mathrm P_\beta \left( U_1=k\right) \,=\, \frac{e^{-\frac{\beta}{2} \, |k|}}{c_\beta}, \quad k\in \mathbb Z,
\end{equation}
where $c_\beta$ is the normalization constant, i.e.,
\begin{equation}
\label{eq:def_cb}
c_{\beta}\,=\,\sum_{k\in\mathbb Z} e^{-\frac{\beta}{2} \, |k|}\,=\,\frac{1+e^{-\frac{\beta}{2}}}{1-e^{-\frac{\beta}{2}}}.
\end{equation}
In such a way the relation $V_i=(-1)^{i-1} \ell_i$ for $i=0,\dots,N$ which is equivalent to
\begin{equation}
\label{eq:def_U}
U_i=(-1)^{i-1}(\ell_{i-1}+\ell_{i}), \quad \text{for} \quad i=1,\cdots, N,
\end{equation}
 with $\ell_{0}=0$, defines a one-to-one map between $\mathcal{L}_{N,t}$ and the set of all possible random walk paths of 
length $t$ and geometric area $G_N(V)$ that satisfies
\begin{equation}
\label{eq:GeomArea}
G_N(V):=\sum_{n=1}^{N} |V_n|=t-N+1.
\end{equation}
Therefore  \eqref{eqdef:QfdtLastLine} becomes
\begin{equation}\label{eq:Q}
\mathrm{Q}_{\beta,\, t,d,f,N} =
c_{\beta}^{N-1}
\,  e^{\, \beta\, \left(\, t-N+1 -\frac{1}{2}|f| -\frac{1}{2}|d| \, \right)} \, 
{ \mathrm P_{ \beta}
\left( \left. G_N(V)=t-N+1,\, V_N=(-1)^{N-1} f\, \right|\, V_1=d\right).
}
\end{equation}
We plug \eqref{eq:Q} into \eqref{eq:Zest1} and
we observe that all the factors $e^{\frac{\beta}{2}|d_i|},\, i=2,\dots,r$ and $e^{\frac{\beta}{2}|f_i|},\, i=1,\dots, r-2$ in the second line of 
\eqref{eq:Zest1}, are simplified by the corresponding quantities appearing in the exponential factor of 
\eqref{eq:Q}, with $f=f_i,\,d=d_i$ and $N=N_i$. 
Since $t_1+\cdots+t_r=L$,
we obtain that
\begin{equation}\label{eq:Zest}
\begin{split}
&
\widehat Z_{\beta,\, L}^{\,\,\tx{NE}}\,  
\leq \,
e^{\,\beta L}\, \sum_{r=1}^{L/4}\, 
\sum_{t_1+\cdots+t_r=L}\, \sum_{(d_i,f_i,N_i)_{i=1}^r\in \Xi}\, 
{ e^{-\frac{\beta}{4}\, (N_1+\cdots+N_{r}-r)}c_{\beta}^{N_1+\cdots+N_r-r} 
}
e^{-\frac{\beta}{2}|d_1|}e^{-\frac{\beta}{2}|f_r|} e^{-\frac{\beta}{2}|f_{r-1}|}\, \times
\\
& \qquad
\prod_{i=1}^r\, 
{
\mathrm P_{ \beta}
 \left( \left. G_{N_i}(V)=t_i-N_i+1,\, V_{N_i}=(-1)^{N_i-1}f_i\, \right|\, V_1=d_i\right)\,
 }
  \prod_{i=0}^{r-2}\, 
 e^{ -\frac{\beta}{4} \left(|f_{i}+d_{i+2}|\right)},
\end{split}
\end{equation}
At this stage we consider the homogeneous Markov chain kernel (recall \eqref{eq:def_V})
\begin{equation}\label{concat}
\rho (x,y):= \frac{e^{\, -\frac{\beta}{4}\left(|x+y|\right)}}{c_{\beta/2}}=
\mathrm P_{\beta/2}\left(\left. \, V_{1}=-y \,\right|\, V_0= x \right),
\end{equation}
where the $\beta$ dependency of $\rho$ is dropped for simplicity.
We observe that $\rho$ is symetric, i.e. $\rho (x,y)=\rho (-x,-y)$.
Since we are working with upper bounds we can safely replace $\beta/2$ in $e^{-\frac{\beta}{2}|f_r|}, e^{-\frac{\beta}{2}|f_{r-1}|}$ and $e^{-\frac{\beta}{2}|d_1|}$ by $\beta/4$
and \eqref{eq:Zest} becomes (with $f_{-1}=f_0=0$ and $d_{r+1}=d_{r+2}=0$)
\begin{equation}
\label{eq:Fin_NEPW0}
\begin{split}
&
\widehat Z_{\beta,\, L}^{\,\, \tx{NE}}\,  
\leq \,
c_{\beta/2}\, e^{\,\beta L}\, \sum_{r=1}^L\, { c_{\beta/2}^r}
 \sum_{t_1+\cdots+t_r=L}\, \sum_{(d_i,f_i,N_i)_{i=1}^r\in \Xi}\,
{ \left(\frac{c_{\beta}}{e^{\frac{\beta}{4}}}\right)^{(N_1+\cdots+N_{r}-r)}
} \times
\\ 
& 
\prod_{i=1}^r\, 
\mathrm P_{ \beta}
 \left( \left. G_{N_i}(V)=t_i-N_i+1,\, V_{N_i}=(-1)^{N_i-1}f_i\, \right|\, V_1=d_i\right)\,
  \, \prod_{i=-1}^{r}\, 
\rho(f_i,d_{i+2}).
\end{split}
\end{equation}
Now, we focus on the second line in  \eqref{eq:Fin_NEPW0}, our aim is to concatenate  all the even blocks on the one hand, and all the odd blocks on the other hand (see Figure \ref{fig:IPDRW10}). 
For this purpose, 
for a given sequence  $(N_1,\dots,N_r)\in \N^r$
and for a given index subset $\mathbf \nu=\{\nu_1,\dots,\nu_m\}\subset \{-1,\dots,r\}$
we set
\begin{equation}
\mathbf N_k:=\sum_{i\in \nu,\, 1\leq i\leq k}\, N_{i}, \qquad  \text{for}\quad k=-1,\dots, r. 
\end{equation}
Note that $\mathbf N_{-1}=\mathbf N_0=0$. We let
$(\tilde{\mathrm P}_{\beta,\nu},\mathbf V)$ be a non-homogeneous random walk  
 $\mathbf V=(\mathbf V_i)_{i=0}^{\mathbf N_r+1}$, starting from $0$,  
for which all increments have law $\mathrm P_{\beta}$ except those  between  $\mathbf V_{\mathbf N_i}$ and $ \mathbf V_{\mathbf N_i+1}$ for 
$i\in\{\nu_1,\dots,\nu_m\} $ that have law $\mathrm P_{\beta/2}$  (cf. \eqref{concat}). In other words ,
\begin{equation}
\begin{split}
&\tilde{\mathrm P}_{\beta,\nu}\Big(\mathbf V_{\mathbf N_i+1}=y\, \mid\, \mathbf V_{\mathbf N_i}=x\Big) =
{\mathrm P}_{\beta/2}\Big(\mathbf V_{1}=y\, \mid\, \mathbf V_{0}=x\Big), \qquad i\in\{\nu_1,\dots,\nu_m\}, 
\\
&\tilde{\mathrm P}_{\beta,\nu}\Big(\mathbf V_{a+1}=y\, \mid\, \mathbf V_{a}=x\Big) =
{\mathrm P}_{\beta}\Big(\mathbf V_{1}=y\, \mid\, \mathbf V_{0}=x\Big), \qquad a\notin  \{\mathbf N_{\nu_1},\dots,\mathbf N_{\nu_m\}}.
\end{split}
\end{equation}
%
We set, for $k\in \{-1,\dots,r\}$,
\begin{equation}
\label{eq:OddEven}
\begin{split}
&\mathbf N_k^e=\sum_{i\in \{1,\dots,k\}\cap 2\N}\, N_{i} 
\qquad \text{and}\qquad 
\mathbf N_k^o=\sum_{i\in \{1,\dots,k\}\cap (2\N-1)}\, N_{i}, 
\end{split}
\end{equation}
We let $(\tilde{\mathrm P}_{\beta}^e, \mathbf V^e),\, (\tilde{\mathrm P}_{\beta}^o, \mathbf V^o)$ be two independent Markov chains of law $\tilde{\mathrm P}_\beta^{e}:=\tilde{\mathrm P}_{\beta,\{-1,\dots,r\}\cap 2\mathbb Z}$ and 
$\tilde{\mathrm P}_\beta^{o}:=\tilde{\mathrm P}_{\beta,\{-1,\dots,r\}\cap (2\mathbb Z+1)}$ respectively. 
We have to look at $(\mathbf V^e_i)_{i=0}^{\mathbf N_r^e+1}$ and $(\mathbf V^o_i)_{i=0}^{\mathbf N_r^o+1}$ as the random walks obtained by
concatenating the even blocks and the odd blocks respectively, see Figure \ref{fig:IPDRW10}.

For a random walk trajectory $V\in \mathbb{Z}^{\N}$ and for two indices $i<j$ we let $G_{i,j}(V):=\sum_{s=i}^j \, |V_s| .$ be the geometric area described by $V$ between $i$ and $j$.
We are now ready to concatenate the even blocks and the odd blocks in  \eqref{eq:Fin_NEPW0}. 
We consider separately the odd and even terms in the second line of \eqref{eq:Fin_NEPW0}. For the odd terms,
since $\rho (x,y)=\rho (-x,-y)$  (cf. \eqref{concat}),
and since for any odd index $i\leq r$, $\mathbf N_i^o=\mathbf  N_{i-2}^o+ N_i$,
the odd terms in the 
integrand of \eqref{eq:Fin_NEPW0} can be rearranged as follows ($f_{-1}=f_0=d_{r+1}=d_{r+2}=0$ by 
definition)
\begin{equation}\label{eq:pasticcio}
\begin{split}
&
\begin{aligned}
\prod_{\substack{i\in \{1,\dots, r\}\\ i\in 2\mathbb Z+1}}\, &
 \mathrm P_{ \beta} \left( \left.  G_{N_i}(V)=t_i-N_i+1,\,  V_{N_i}=(-1)^{N_i-1}f_i\, \right|\, V_1=d_i\right)
  \\
  & \hspace{3cm} \times
  \prod_{\substack{i\in \{-1,\dots, r\}\\ i\in 2\mathbb Z+1}}\, 
\mathrm P_{\beta/2}\Big(V_1=(-1)^{N_i}d_{i+2} \, |\, V_0=(-1)^{N_i-1}f_i \Big) 
\end{aligned}
\\
& 
\qquad =\tilde{\mathrm P}_\beta^{o}\left(
   \begin{aligned} & G_{\mathbf N_{i-2}^o+1,\mathbf N_i^o}(\mathbf V^o)=t_i-N_i+1, \quad
                    \mathbf V_{\mathbf N_{i-2}^o+1}^o=(-1)^{\mathbf N_{i-2}^o}d_i, \quad \mathbf V_{\mathbf N_{i}^o}^o=(-1)^{\mathbf N_i^o-1}f_i,\\
                   & \forall \, i\in\{1,\dots, r\}\cap (2\mathbb Z+1) \quad \text{and} \quad 
                   \mathbf V_{\mathbf N_r^o+1}^o=0
   \end{aligned}
\right),
\end{split}
\end{equation}
An analogous decomposition holds true for the even terms in the 
integrand of \eqref{eq:Fin_NEPW0}.

With the help of \eqref{eq:pasticcio} we interchange the sum over the $t_i$'s with the sum over the $N_i$'s in \eqref{eq:Fin_NEPW0} and we remove the restriction $t_1+\dots+t_r=L$
to obtain the following upper bound, 
\begin{equation}\label{thy}
\begin{split}
&\sum_{t_1+\cdots+t_r=L}\, \sum_{(d_i,f_i,N_i)_{i=1}^r\in \Xi}\,
{ \left(\frac{c_{\beta}}{e^{\frac{\beta}{4}}}\right)^{(N_1+\cdots+N_{r}-r)}
} \times\\
& \quad 
\prod_{i=1}^r\, 
\mathrm P_{ \beta}
 \left( \left. G_{N_i}(V)=t_i-N_i+1,\, V_{N_i}=(-1)^{N_i-1}f_i\, \right|\, V_1=d_i\right)\,
  \, \prod_{i=-1}^{r}\, 
\rho(f_i,d_{i+2})\\
& 
\leq 
\sum_{\substack{N_1+\dots+N_r\leq L+r, \\ N_i\geq 2\, i=1,\dots, r-1}} { \left(\frac{c_{\beta}}{e^{\frac{\beta}{4}}}\right)^{(N_1+\cdots+N_{r}-r)}
} \times  \\
&\quad \sum_{\substack{t_i\colon t_i\geq N_i-1\\ i=1,\dots,r}}
 \tilde{\mathrm P}_\beta^{o}\left(
    G_{\mathbf N_{i-2}^o+1,\mathbf N_i^o}(\mathbf V^o)=t_i-N_i+1,\,
                    \forall \, i\in\{1,\dots, r\}\cap (2\mathbb Z+1)\, \text{and}\, 
                     \mathbf V_{\mathbf N_r^o+1}^o=0
\right) \\
&\hspace{2cm} \times 
\tilde{\mathrm P}_\beta^{e}\left(
   G_{\mathbf N_{i-2}^e+1,\mathbf N_i^e}(\mathbf V^e)=t_i-N_i+1, \,
                    \forall \, i\in\{1,\dots, r\}\cap 2\mathbb Z\, \text{and} \,
                    \mathbf V_{\mathbf N_r^e+1}^e=0
\right) .
\end{split}
\end{equation}
 We note that the sum over the $t_i$'s in the r.h.s. of \eqref{thy} is bounded from above by $1$.
It remains to plug \eqref{thy} into \eqref{eq:Fin_NEPW0} in which we have
exchanged the summation over the $t_i$'s with that over the $N_i$'s. This leads to 
\begin{equation}
\begin{split}
&\widehat  Z_{\beta,\, L}^{\,\, \tx{NE}}\, \leq \, 
c_{\beta/2}
e^{\,\beta L}\, \sum_{r=1}^{L/4}\, {c_{\beta/2}^r}
 \sum_{\substack{N_1+\dots+N_r\leq L+r, \\ N_i\geq 2\, i=1,\dots, r-1}}\,
{ \left(\frac{c_{\beta}}{e^{\frac{\beta}{4}}}\right)^{(N_1+\cdots+N_{r}-r)}
}
\\
\label{eq:estFZnePW2}
&\qquad \leq
c_{\beta/2} e^{\, \beta\, L}\, \left[c_{\beta/2}\sum_{N=0}^{\infty} \left(\frac{c_{\beta}}{e^{\frac{\beta}{4}}}\right)^{N}\right]\sum_{r=0}^\infty  {c_{\beta/2}^r} \left[ \sum_{N=1}^{\infty} \left(\frac{c_{\beta}}{e^{\frac{\beta}{4}}}\right)^{N}\right]^{r}.
\end{split}
\end{equation} 
 At this stage, by using the definition of $c_\beta$ in \eqref{eq:def_cb}, there exists $\beta_0 \in (0,\infty)$ such that $c_\beta/e^{\beta/4}<1/4$ and $c_{\beta/2}\leq 2$, for any $\beta > \beta_0$. 
This implies that  $\widehat  Z_{\beta,\, L}^{\,\, \tx{NE}}\leq C(\beta)\, e^{\, \beta\, L}$ for some suitable constant $C(\beta)\in (0,\infty)$.

\begin{figure}
\includegraphics[scale=0.75]{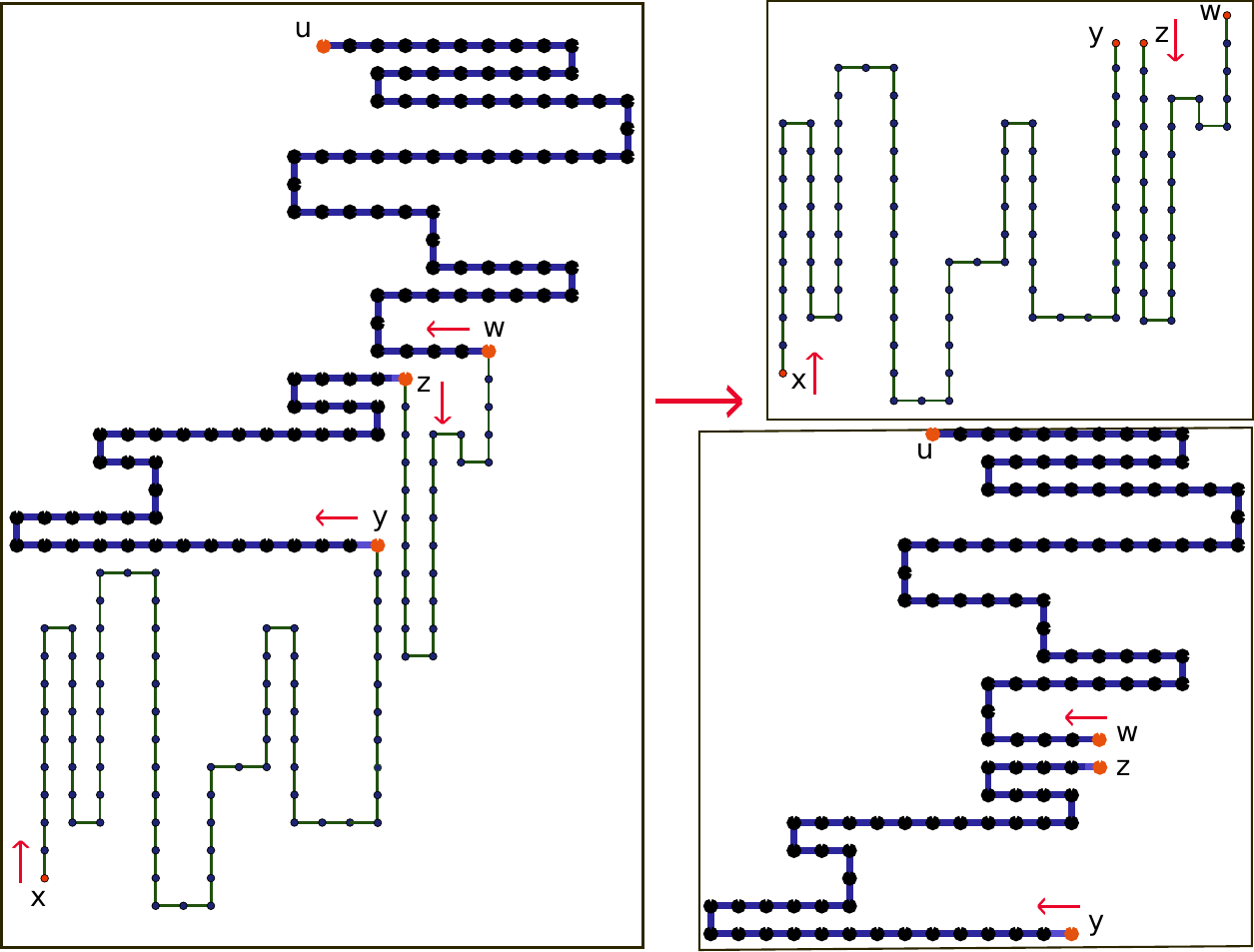}
\caption{A  NE-prudent path made of two south-north blocks (the first and the third, in green) and two west-east blocks (the second and the fourth, in blue).  
The blocks start at $x,y,z,w$ respectively and their orientation 
is given by the arrow near to each starting point.
Then we can concatenate the even blocks (see top right picture) and the  odd blocks (see bottom right picture), 
obtaining two partially directed self-avoiding path.
}
\label{fig:IPDRW10}
\end{figure}

\section{Proof of Theorem \ref{thh1}}\label{proof of thh1} 

%

To prove Theorem \ref{thh1}, we show that for any $\beta \geq 0$ the partition function of
 IPSAW can be bounded from below and from above by the partition function of NE-IPSAW, by paying at most a sub-exponential price, i.e.,
\begin{equation}
\label{eq:samePF}
\mathrm{Z}_{\beta, L}^{\tx{NE}}\leq \mathrm{Z}_{\beta , L}\leq e^{o(L)}\,\mathrm{Z}_{\beta , L}^{\tx{NE}}, \quad \forall\, L\in  \mathbb N,\, \forall\, \beta \in [0,\infty).
\end{equation}
Where the $o(L)$ depends on $\beta$.

The lower bound in \eqref{eq:samePF} is trivial because NE-paths are a particular subclass of prudent paths. The proof of the upper bound is harder and needs some work. 
%
In a few word, 
we will apply a strategy which  consists, for every $L\in \N$ and $\beta\in (0,\infty)$, in building a mapping $M_L: \Omega_L^{\tx{PSAW}}\to \Omega_{L}^{\tx{NE}}$ which satisfies
the following conditions: 
\begin{enumerate}\label{constru}
\item \label{item:ML1} There exists a real function $f_1$ such that $| (M_L)^{-1}(\hat w)| \leq e^{f_1(L)},$  where $f_1(L)$ is uniform in  $\hat w\in \Omega_{L}^{\tx{NE}}$ and $f_1(L)=o(L)$,
\item \label{item:ML2} There exists a real function $f_2$ such that $H\,(w)-H\,\big(M_L(w)\big)\leq  f_2(L),$ where $f_2(L)$ is uniform in $w\in \Omega_L^{\tx{PSAW}}$ and $f_2(L)=o(L)$.
\end{enumerate}
The existence of $(M_L)_{L\in \N}$ satisfying the aforementioned properties is sufficient to prove  the upper bound in \eqref{eq:samePF}. The dependency in $\beta$ is dropped for simplicity.

We will build the mapping with the help of the macro-block decomposition of every path $w\in \Omega_L^{\tx{PSAW}}$ (recall Section \ref{sec:IPSAWdef}). By a succession of systematic 
transformations we will indeed map each macro-block onto an associated NE-macro-block in such a way that the resulting 
NE-macro-blocks can be concatenated into a NE-prudent path which will be the image of $w$ by $M_L$. Then, it will be enough to check that $(M_L)_{L\in \N}$ satisfies the aforementioned properties.

The first property, \eqref{item:ML1}, will be rigorously proven below and it is mostly a consequence of Lemma \ref{lemma:macroblocknumber} which states that the 
macro-block number is at most $\mathcal O (\sqrt{L})$. The second property, \eqref{item:ML2}, is the hardest to check. 
On the energetic point of view, the main difference between a generic prudent paths
and their North-East counterpart is that generic paths undergo interactions between macro-blocks. Such interactions turn out to be tuned by the first stretches of each macro-blocks.
Moreover, Lemma \ref{lemma:macroblocknumber} 
implies that an important loss between $w$ and $M_L(w)$ can only be observed when those first stretches are very large.
This is the reason why we remove such stretches from the path as soon as they are larger than a prescribed size,
 e.g., $L^{1/4}$. This only triggers
a sub-exponential loss of entropy since those large stretches are at most $L^{3/4}$. It might cause a large loss of energy, but this loss will be compensated by the construction of a large square block (i.e., maximizing the energy) 
containing all those stretches that we have removed.  

\medskip

We now start with the precise construction of $M_L$. For such purpose, we define $4$ sequences of applications that are mapping trajectories onto other trajectories. To be more specific, for every $L\in \N$,
 we define $5$ sets of trajectories $\mathcal W_{i,L},\, i=1,\dots, 5$, interpolating $\Omega_L^{\tx{PSAW}}=\mathcal W_{1,L}$ with $\Omega_L^{\tx{NE}}=\mathcal W_{5,L}$, and $4$ sequences of applications $\psi_L^{i}:\mathcal W_{i,L}\to\mathcal W_{i+1,L}$, cf. Steps 1-4 below.
We define $M_L$ as the composition of such maps $\psi_L^4,\dots,\psi_{L}^1$, i.e., $M_L:=\psi_L^4\circ\psi_L^3\circ\psi_L^2 \circ \psi_{L}^1$. 
To prove property \eqref{item:ML1} we show that each $\psi_L^i$ is \emph{sub-exponential}, i.e,
\begin{definition}\label{subex}
The sequence of mappings  $(\psi_L)_{L\in \N}$, with $\psi_L\colon \mathcal{W}_{1,L}\to \mathcal{W}_{2,L}$, is \emph{sub-exponential} if there exist $c_1,c_2\in (0,\infty)$ and 
$\alpha\in [0,1)$ such that for every $L\in \N$ and every $w\in \mathcal{W}_{2,L}$
\begin{equation}\label{defsub}
|(\psi_L)^{-1}(w)| \leq c_1 e^{c_2 L^\alpha }.
\end{equation} 
\end{definition}
In \emph{Step $5$} we complete the proof by showing that such $M_L$ satisfies also the second property \eqref{item:ML2}.

\medskip

\subsection{Step 1}\label{item:s2}
Let $w\in \Omega_L^{\tx{PSAW}}$ be a prudent path. We can decompose $w$ into a sequence of macro-blocks, 
$\Lambda=(\Lambda_1,\dots,\Lambda_m)$, where $m=m(w)\in\mathbb N$, cf. \eqref{eq:OmegaPsaw} and Section \ref{sec:IPSAWdef}. We observe that 
each macro-block $\Lambda_i\in \Omega_{L_i}^{x_i}$, with $x_i\in \{\text{NE, NW, SE, SW}\}$ and 
$L_i\in \N$ such that $L_1+\dots+L_m=L$.
Each macro-block $\Lambda_i$ can be decomposed into a sequence of blocks $(\pi_1^i,\dots,\pi_{r_i}^i)$, cf. Section
 \ref{sec:NE-IPSAW}. We stress that both such decompositions are uniques. 
For every $i=1,\dots, m$, we consider separately the subsequence of blocks with odd indices, i.e.,  $\pi^{(\textbf o), i}:=(\pi_k^i)_{k\in \{1,\dots, r_{i}\}\cap (2\N -1)}$ and the subsequence of blocks with even indices, i.e., $\pi^{(\textbf e), i}:=(\pi_k^i)_{k\in \{1,\dots, r_i\}\cap 2\N}$. We apply to each of them  
the following procedure (1-4), drawn in Figure \ref{fig:Step2}. In the sequel, this procedure will be referred to as the \emph{large stretches removing procedure}.
\begin{enumerate}
\item \label{item:step1.1} We consider the first macro-block $\Lambda_1$ and the odd block subsequence, 
$\pi^{(\textbf o), 1}=(\pi_k^1)_{k\in \{1,\dots, r_1\}\cap (2\N -1)}$. 
We start by considering the first stretch of the first block, $\pi_1^1$. 
   If this stretch is not larger than $L^{1/4}$
we stop the procedure for the subsequence $\pi^{(\textbf o), 1}$ and
we jump to (2). 
    Otherwise, if the first stretch is larger than $L^{1/4}$,  we pick it off, and we 
    reapply the procedure to the next stretch of the block. 
    
    It may be that the procedure leads to removing all the stretches in the first block. 
In such case we re-apply the same procedure to the next block of $\pi^{(\textbf o), 1}$  and so on,  
until we find the first stretch smaller than $L^{1/4}$. 
For instance, in the odd subsequence, if we have entirely removed the first block, 
then we re-apply the procedure to the third block. 
  If none of the stretches in the subsequence $\pi^{(\textbf o), 1}$ is smaller than $L^{1/4}$, 
  then the whole subsequence of blocks is removed and we stop the procedure for the subsequence.
  
\item We apply the procedure (1) to the even block subsequence,
  $\pi^{(\textbf e), 1}=(\pi_k^1)_{k\in \{1,\dots, r_1\}\cap 2\N }$, i.e., we start with the procedure (1) by considering the first stretch of the second block, $\pi_2^1$.

 \item We apply the procedure (1) to the very last block of the macro-block $\Lambda_1$ (if it has not been already 
 modified). 
 
  We will see in Step $3$ below the importance of applying the large-stretch removing procedure to the very last block.
 
  \item We repeat (1-3) for the macro-blocks $\Lambda_2,\dots,\Lambda_m$.
\end{enumerate}

\begin{figure}
\includegraphics[scale=0.7]{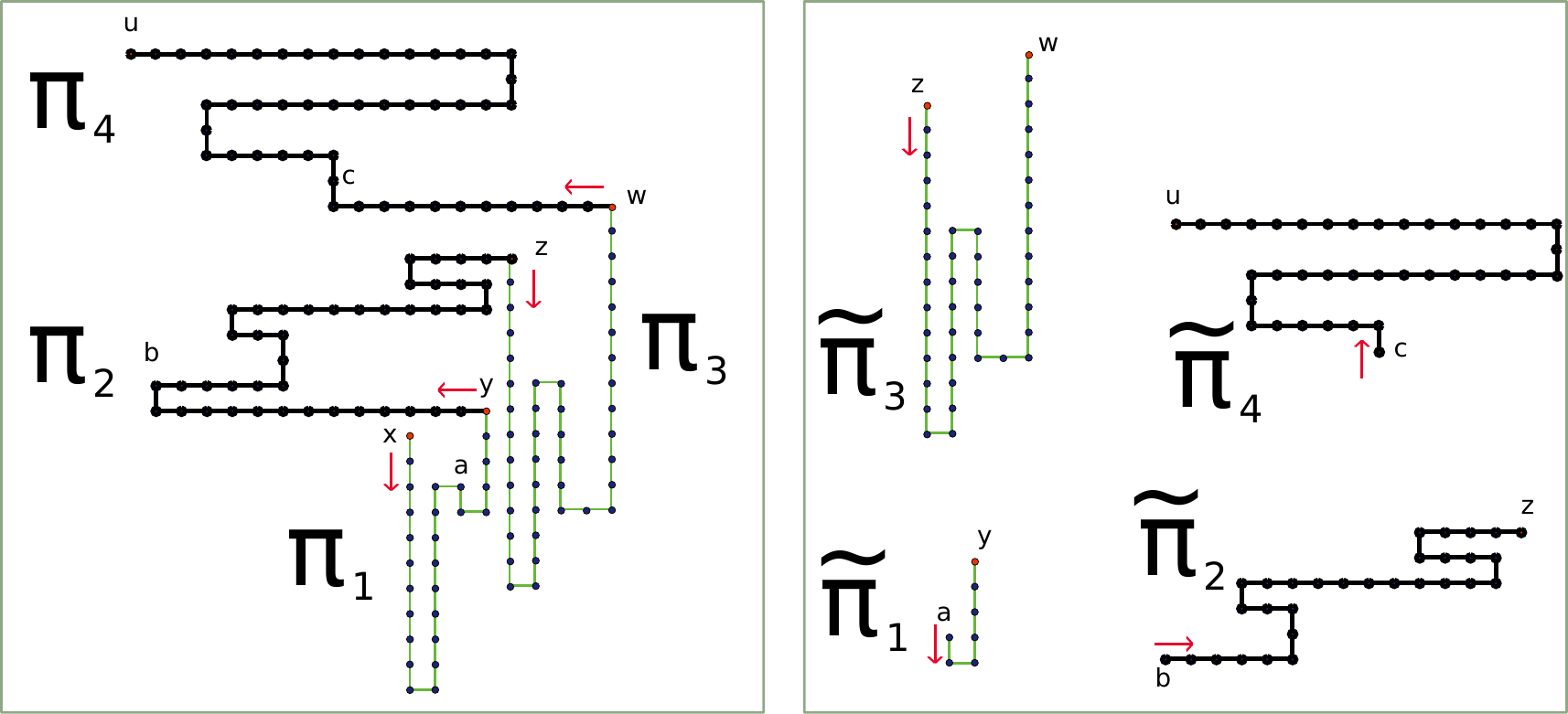}
\caption{A {NE}-prudent path decomposed into $4$ blocks $(\pi_1,\pi_2,\pi_3,\pi_4)$. 
We apply  the large-stretch removing procedure. The first $2$ stretches of $\pi_1$ are longer than $L^{1/4}$, therefore
we pick them off. The third stretch is smaller than $L^{1/4}$ and thus we stop the procedure on the odd subsequence.
We apply  the large-stretch removing procedure to the even subsequence. In this case we remove only the
first stretch of $\pi_2$ and we stop the procedure. 
Since $\pi_4$ is the last block of the trajectory we re-apply the large-stretch removing procedure to $\pi_4$. Also in this case
we remove only the first stretch. The result is the block sequence $(\tilde\pi_1,\tilde\pi_2,\tilde\pi_3,\tilde\pi_4)$. 
}
\label{fig:Step2}
\end{figure}

  \begin{remark}
  \label{reMinItem1}
 We note that picking off stretches does not change the exit condition, cf. Definition \ref{def:exit}. 
 To be more precise, given an oriented block with $N$-stretches, $(\ell_1,\dots,\ell_N)$, if we remove the first $k$-stretches ($k< N$), then the path obtained by 
 concatenating $(\ell_{k+1},\dots,\ell_N)$ still satisfies the same exit condition. The exit condition indeed means that $\ell_1+\dots+\ell_N > \max\{0, \ell_1,\dots,\ell_1+\dots+\ell_{N-1}\}$ and therefore $\ell_{k+1}+\dots+\ell_N > \max\{0, \ell_{k+1},\dots,\ell_{k+1}+\dots+\ell_{N-1}\}$. However, picking off stretches can change the initial condition of a block, 
 it could happen that the first stretch of the modified block is positive, i.e., $\ell_{k+1}\geq 0$. 
 \end{remark}
 
 At this stage, we need to give a mathematical definition of the large stretch removing procedure. To that aim, for every $L\in \N$,
 we denote by  $\psi_L^1:\Omega_L^{\tx{PSAW}}\to \psi_L^1\big(\Omega_L^{\tx{PSAW}}\big) $ the map that
 realizes the large stretches removing procedure.
 At the end of the present section, we will show that $(\psi_L^1)_{l\geq 1}$ is sub-exponential. However, for the sake of conciseness, the fine details of the proof will 
 be displayed only in the case for which we do not reapply the large stretch removing procedure to modify 
 the very last block of each macro-block. The proof in that case is very similar, see Remark \ref{rem:lastblock} below. 


\subsubsection{Large stretch removing procedure in a single macro-block}\label{singmac}

We pick $l\in \N$ and an orientation  $x\in \{\text{NE, NW, SE, SW}\}$. In the present section, we define the large stretch removing procedure on 
those macro-blocks in $\Omega_l^x$. To that aim, we define with (\ref{eq:defat0}--\ref{eq:PitileDef}) an application 
$\mathcal{T}_{l,L}: \Omega_l^x\mapsto \Omega^{l,x}_{\leq,L}$ that performs Procedure \eqref{item:step1.1}, i.e.,
removes the large stretches in a  single macro-block.   A rigorous definition of the image set
$\Omega^{l,x}_{\leq,L}$ will be provided in Definition \ref{def:omegaOH} below.

Before defining $\mathcal{T}_{l,L}$, let us briefly recall that we can associate with any arbitrary  macro-block
$\lambda\in \Omega_l^x$ an unique block sequence
$(\pi_1,\dots,\pi_r)$, with $r=r(\lambda)$. In particular it holds that 
$\lambda =\pi_1 \oplus\dots\oplus\pi_r$, see Section \ref{sec:NE-IPSAW}. Therefore, in the rest of the
section, we identify the macro-block with its block decomposition, i.e., $\lambda=(\pi_1,\dots,\pi_r)$.
For every $i\in \{1,\dots,r\}$, we let $N_i=N_i(\lambda)$ be the number of stretches
 in the $i$-th block (thus, cf. Section \ref{sec:IPDSAW}, the number of inter-stretches is $N_i-1$), and we let $(\ell_1^{(i)},\cdots, \ell_{N_i}^{(i)})$ be the sequence of stretches in the $i$-th block. 
 Since the sequence of stretches identifies the block, with a slight abuse of notation, we write $\pi_i=(\ell_1^{(i)},\cdots, \ell_{N_i}^{(i)})$.
The sequence of blocks $(\pi_1,\dots,\pi_r)$ can be partitioned into two subsequences 
$\mathbf{\pi^{(o)}}=(\pi_i)_{i\in \{1,\dots, r\}\cap (2\mathbb N -1)}$ and $\mathbf{\pi^{(e)}}=(\pi_i)_{i\in \{1,\dots, r\}\cap 2\mathbb N}$.

At this stage, we are ready to introduce the specific notations for the large stretches removing procedure. 
We let $k_1,k_2=k_1(\lambda),k_2(\lambda)\in \{1,\dots,r\}$  be the indices of the last block modified by the
large stretches removing procedure in the odd subsequence and in the 
even subsequence respectively (cf. \eqref{item:step1.1}). Analogously, let $j_1=j_1(\lambda)\in \{0,\dots,N_{k_1}\}$ and $j_2=j_2(\lambda)\in \{0,\dots,N_{k_2}\}$ be the index of the last stretch we removed in $\pi_{k_1}$ and $\pi_{k_2}$ respectively. 
By definition of  $r,k_1,k_2,j_1,j_2, N_m$ it holds that (note that the $\lambda$ dependency is dropped for simplicity)
\begin{align}\label{eq:defat0}
&|\ell_n^{(m)}| > L^{1/4}, \quad \text{for} \quad m\in\{1,\dots,k_1-1\}\cap (2\mathbb N-1),\  n\in \{1,\dots,N_m\},  \\
\nonumber &|\ell_n^{(k_1)}| > L^{1/4}, \quad \text{for} \quad n\in \{1,\dots,j_1\}, \\
\nonumber &|\ell_{j_1+1}^{(k_1)}| \leq L^{1/4}; 
\\ \nonumber \\
\label{eq:defat0bis} &|\ell_n^{(m)}| > L^{1/4}, \quad \text{for} \quad m\in\{1,\dots,k_2-1\}\cap 2\mathbb N,\  n\in \{1,\dots,N_m\}, \\
\nonumber &|\ell_n^{(k_2)}| > L^{1/4}, \quad \text{for} \quad n\in \{1,\dots,j_2\},\\
\nonumber &|\ell_{j_2+1}^{(k_2)}| \leq L^{1/4}.
 \end{align}

We let  $\mathcal{T}_{l,L}(\lambda)$
be the sequence of blocks remaining once the large stretch removing procedure in the macro-block $\lambda$ is complete. To be more specific, the subsequence of odd blocks $(\mathcal{T}_{l,L}(\lambda)_i)_{i\in \{1,\cdots, r\}\cap (2\mathbb N-1)}$ is defined as
 \begin{equation}
 \label{eq:PitileDef}
 \begin{split}
 & \mathcal{T}_{l,L}(\lambda)_k=\emptyset, \quad \forall\, k\in\{1,\dots,k_1-1\}\cap (2\mathbb N-1),  
 \\
& \mathcal{T}_{l,L}(\lambda)_{k_1}=\Big(\ell_{j_1+1}^{(k_1)},\dots,\ell_{N_{k_1}}^{(k_1)}\Big),  
 \\
& \mathcal{T}_{l,L}(\lambda)_{k}=\pi_k,\quad \forall\, k\in\{k_1+1,\dots,r\}\cap (2\mathbb N -1).
 \end{split}
 \end{equation}
 The subsequence of even blocks $(\mathcal{T}_{l,L}(\lambda)_i)_{i\in \{1,\dots, r\}\cap 2\mathbb N}$ is defined in the same way. 
 
 
%

 \begin{remark}
 \label{rem:StIt3}
We stress that if we start with a sequence of blocks 
 $\lambda=(\pi_1,\dots,\pi_r)\in  \Omega_{l}^{x}$, then, in general, it is not true that the sequence $\mathcal{T}_{l,L}(\lambda) = (\mathcal{T}_{l,L}(\lambda)_1,\dots,\mathcal{T}_{l,L}(\lambda)_r)$ we
defined in \eqref{eq:PitileDef} is still a decomposition of a $x$-prudent path, i.e., 
$\mathcal{T}_{l,L}(\lambda)$ may not belong to $\Omega_{s}^{x}$, for any $s\leq l$.  
For this reason we define here below a new set of oriented paths, $\Omega_{\leq,L}^{l,x}$, which gathers the 
images of all paths in $\Omega_l^x$ through $\mathcal{T}_{l,L}$.
 \end{remark}

 \begin{definition}\label{def:omegaOH} 
We say that a block sequence $\lambda=(\pi_1,\dots,\pi_r), \, r\in \{0,\dots,L\}$ belongs to $\Omega_{\leq,L}^{l,x}$  if and only if 
\begin{itemize}
\item $r\leq L$ and there exists $k_1\in 2\N-1$ and $k_2\in 2\N$ such that $k_1,k_2\leq \max\big\{r,\frac{l}{L^{1/4}}\big\}$ and 
$\pi_{i}=\emptyset$ for $i\in \{1,\dots,k_1-2\}\cap 2\N-1$ and for $i\in \{1,\dots,k_2-2\}\cap 2\N$, whereas 
$\pi_{i}\neq\emptyset$ for $i\in \{k_1,\dots,r\}\cap 2\N-1$ and for $i\in \{k_2,\dots,r\}\cap 2\N$.
\item the $x$ orientation is respected (cf. Section \ref{sec:NE-IPSAW}), e.g., 
in the case of $x=\tx{NE}$, then, every 
$\pi_i$ with ${i\in\{k_1,\dots, r\}\cap (2\mathbb N - 1)}$ is south-north (resp. west-east) and every 
$\pi_i$ with ${i\in\{k_2,\dots, r\}\cap 2\mathbb N }$ is west-east  (resp. south-north).  

\item There is no restriction on the orientation and on the length of the first stretch of $\pi_{k_1}$ and $\pi_{k_2}$. 

\item The total length (the sum of the length of every stretches in $(\pi_1,\dots,\pi_r)$) is smaller than $l$.
\end{itemize}
\end{definition}

We conclude this section with the computation of an upper bound on the cardinality of the 
ancestors of an arbitrary $\gamma\in \Omega_{\leq, L}^{l,x}$ by $\mathcal{T}_{l,L}$. 
We denote by  $h$ the total length of $\gamma$. Let $\lambda\in \Omega_l^x$ be an ancestor of 
$\gamma$ by $\mathcal{T}_{l,L}$.  The total length of those stretches removed from 
$\lambda$ by $\mathcal{T}_{l,L}$ to get $\gamma$ necessarily equals $l-h$. By definition, cf. \eqref{eq:PitileDef}, the number of empty blocks in $\gamma$ is
$k^{'}_1:=\frac{k_1-1}{2}$ (resp. $k^{'}_2:=\frac{k_2-2}{2}$) for the odd subsequence  (resp. for the even subsequence) of blocks.
Therefore, since $\mathcal T_{l,L}$ may remove only stretches larger than $L^{1/4}$, the number $v$ of stretches removed from $\lambda$ to get $\gamma$ satisfies $k^{'}_1+k^{'}_2 + 2\leq v\leq (l-h)/L^{1/4}$. This suffices to write the following upper bound 
\begin{equation}\label{uppbsb}
\Big| (\mathcal{T}_{l,L})^{-1}(\gamma)\Big |\leq \sum_{v=k^{'}_1+k^{'}_2+2}^{(l-h)/L^{1/4}} 2^v \, \binom{l-h}{v}\,  \binom{v}{k^{'}_1+k^{'}_2+2}.
\end{equation}
The summation in \eqref{uppbsb} runs over $v$ which stands for the number of stretches removed from $\lambda$. 
Let us explain \eqref{uppbsb}. Once $v$
is chosen, reconstructing $\lambda$ requires to choose the length of each removed stretches and these choices are less than  the binomial factor $\binom{l-h}{v}$. Once, the length of each removed stretch is chosen, one has to chose their orientations 
which gives at most $2^v$ choices.
Finally, those deleted stretches have to be distributed among the  $k^{'}_1+k^{'}_2+2$ blocks in $\gamma$  that have  to be completed by other stretches to recover $\lambda$.  This gives rise to the term  $\binom{v}{k^{'}_1+k^{'}_2+2}$. Then,
the fact that $k^{'}_1+k^{'}_2+2\leq (l-h)/L^{1/4}$ allows us to bound from above the r.h.s. in \eqref{uppbsb} by 
\begin{equation}\label{endup}
\Big| (\mathcal{T}_{l,L})^{-1}(\gamma)\Big |\leq e^{c_0 l \log(L)/L^{1/4}},
\end{equation}
for some constant $c_0\in (0,\infty)$.

\subsubsection{Large stretch removing procedure for a generic prudent path.}\label{mulb}
We are ready to define the map $\psi_{L}^1$, which defines the large stretch removing procedure applied to generic prudent path. 
We recall equation   \eqref{eq:OmegaPsaw}, which asserts that a path $w\in  \Omega_L^{\tx{PSAW}}$
can be  decomposed into $m=m(w)\in \N$ macro-blocks  $(\Lambda_1,\dots,\Lambda_m)$.
Such macro-block decomposition is an element of $\Theta_{m,L}$ and each macro-block
belongs to some $\Omega_{t_i}^{x_i}$ (see \eqref{THETAml}) with $t_1+\dots+t_m=L$. Thus, we define $\psi_L^1$ by applying, for every $i\leq m$,
the map $\mathcal{T}_{t_i,L}$ to $\Lambda_i$, i.e., 
\begin{equation}\label{trhu}
\psi_L^1(w):= \big( \mathcal T_{t_1,L}(\Lambda_1),\dots, \mathcal T_{t_m,L}(\Lambda_m)   \big).
\end{equation}
 The image set of $\Omega_L^{\tx{PSAW}}$ by $\psi_L^1$ is 
 is therefore $
 \mathcal{W}_{2,L}:=\bigcup_{m\in \N} \psi_L^1\big(\Theta_{m,L}\big)$
 which is a subset of 
%
%
\begin{equation}  \label{eq:W2L}
 \bigcup_{m \in \mathbb N}\bigcup_{L_1+\dots+L_m=L}
 \bigcup_{\substack{(x_i)_{i=1}^m\in \{\tx{NE}, \tx{NW}, \tx{SE}, \tx{SW}\}\\
x_{i-1}\neq x_{i}}} \, 
\Omega_{\leq,L}^{L_1,x_1}\times\dots\times\Omega_{\leq,L}^{L_m,x_m}.
 \end{equation}
 Let us observe that the union over $m$ is finite, because, by Lemma \ref{lemma:macroblocknumber}, the number of macro-blocks $m$ is  at most $c L^{1/2}$, for some universal constant $c\in(0,\infty)$. 
 Moreover, let us observe that \eqref{eq:W2L} is not a disjoint union.

The step will be completed once we show that $\psi_L^1$ is sub-exponential.
To that aim, we need an upper bound on the 
 cardinality of $(\psi_L^1)^{-1}(\tilde \Lambda)$ that is  uniform on the choice of $\tilde \Lambda\in \psi_L^1\big(\Omega_L^{\tx{PSAW}}\big)$. Thus, we pick $\tilde \Lambda \in \psi_L^1\big(\Omega_L^{\tx{PSAW}}\big)$ and we consider its macro-block decomposition 
 $(\tilde \Lambda_1,\dots,\tilde \Lambda_m)$.  
Before counting the number of ancestors of $\tilde \Lambda$ by $\psi_L^1$, one should note that 
$\tilde \Lambda$ may belong to more than one set of the form $\Omega_{\leq, L}^{L_1,x_1}\times\dots\times\Omega_{\leq,L}^{L_m,x_m}$. However, since $m=\mathcal O(L^{1/2})$ 
(cf. Lemma \ref{lemma:macroblocknumber}) and since $L_1+\dots+L_m=L$, the number of such sets is bounded from above by 
$\sum_{m=1}^{c\sqrt{L}}\binom{L}{m}$, for some $c \in (0,\infty)$. This quantity is less than $c\sqrt{L}\binom{L}{c\sqrt{L}}\leq e^{2 c\sqrt{L} \log(L)}$.  It remains to count the number of ancestors of 
$\tilde \Lambda$ within  a given $\Omega_{L_1}^{x_1}\times\dots\times\Omega_{L_m}^{x_m}$. By \eqref{endup} above,
this is at most  $e^{c_0L_1 \log(L)/L^{1/4}}\times \dots \times  e^{c_0L_m \log(L)/L^{1/4}}$ which again is smaller 
than $e^{c_0 L^{3/4}\log(L)}$. This suffices to conclude that $\psi_L^1$ is sub exponential.

 \begin{remark}
\label{rem:lastblock}
When we prove that  $\psi_L^1$ is sub exponential, we have not taken into account the fact that the large stretch removing procedure 
should also be applied to the very last block of each macro-block. However, this affects only marginally our computations and does not 
modify the sub-exponentiality of $\psi_L^1$.
To be more precise, if we also modify the very last block in any macro-block, then to bound from above the
 number of ancestors of $\tilde \Lambda$ by $\psi_L^1$, we 
consider separately two parts. In the first part, we apply
 the large stretches removing procedure to each macro-block without consider the 
very last block of any macro-block. This part has been already considered in the discussion above, which gave rise to
 \eqref{uppbsb} and \eqref{endup}. Then we consider the large stretches removing procedure apply only to any last block of any macro-block. It is not difficult to check that \eqref{uppbsb} provides an upper bound also for this part of the 
 procedure.
 Therefore, we conclude that also in this general case \eqref{endup} still holds up to a constant.
%
\end{remark}


\subsection{Step 2}
 \label{item:s3} 
 In Step 1 we considered $w\in \Omega_L^{\tx{PSAW}}$ and we decomposed it into a sequence of macro-blocks, cf. \eqref{eq:OmegaPsaw}, $\Lambda=(\Lambda_1,\dots,\Lambda_m)$, where $m=m(w)\in\mathbb N$. We let $(\tilde\Lambda_1,\dots,\tilde\Lambda_m)=\psi_L^1(w)$ be the result of the large stretch removing procedure. 
 Each $\tilde\Lambda_i$ is defined by a sequence $(\tilde\pi_1^i,\dots,\tilde\pi_{r_i}^i)$ which is not necessary concatenable, cf. Remark \ref{rem:StIt3} and Section \ref{decomp}.
In this step we aim at modifying all the sequences $(\tilde\pi_1^i,\dots,\tilde\pi_{r_i}^i)$, for $i=1,\dots, m$, in order to recover a concatenable block sequence. In the sequel this procedure will be referred to as the \emph{concatenating block procedure}. 

Our procedure $\psi_L^2$ acts on  $\mathcal{W}_{2,L}$ (recall \eqref{eq:W2L}). To be more specific, $\psi_L^2$ takes as an argument 
an element  
$$\tilde \Lambda=(\tilde \Lambda_1,\dots,\tilde \Lambda_m)\in \Omega_{\leq,L}^{L_1,x_1}\times\dots\times\Omega_{\leq,L}^{L_m,x_m}$$
where  $m\leq cL^{1/2}$, where  $(L_1,\dots,L_m)$ is a sequence of length such that $L_1+\dots+L_m=L$, where $(x_1,\dots,x_m)$ is a sequence of orientations and where we keep in mind that $\tilde \Lambda$ is in the image set of $\psi_L^1$ . As a result, $\psi_L^2$ provides us with  a 
sequence of macro-blocks 
$$\psi_{L}^2(\tilde \Lambda)=\hat \Lambda=(\hat \Lambda_1,\dots,\hat \Lambda_m)$$ 
where, for every $i\leq m$,  $\hat \Lambda_i\in \Omega_{t_i}^{x_i}$ with  $t_i$  the total length of $\tilde \Lambda_i$.

We describe the procedure on a single modified macro-block $\tilde\Lambda$ in Section
\ref{concatonesin} below.  Later on, we generalize the procedure to the whole block-sequence in Section \ref{concatonin}.

\smallskip 

\subsubsection{Concatenating block procedure in a single macro-block}
\label{concatonesin}

We pick $h\leq l\in \N$ and consider   $\tilde \lambda=(\tilde\pi_1,\dots,\tilde\pi_{r})\in \Omega^{l,x}_{\leq,L}$ such that the total length of 
$\tilde \lambda$ equals $h$.

\smallskip

 %
Recall the definition of $k_1(\tilde \lambda)$ and $k_2(\tilde \lambda)$ in  Definition \ref{def:omegaOH}. By Remark \ref{reMinItem1} it turns out that $\tilde \lambda$ 
fails to be concatenable only if $|k_1-k_2|\geq 3$ that is  if
there exists an $i\leq r$ such that $\tilde\pi_i,\tilde\pi_{i+2}\neq \emptyset$ and $\tilde\pi_{i+1}=\emptyset$. In such case indeed, if  the last stretch of $\tilde\pi_i$ and the first stretch of $\tilde\pi_{i+2}$ have opposite orientations (see Figure \ref{fig:Step3}) then $\tilde \pi_i$ and $\tilde \pi_{i+2}$
are not concatenable.
Making  $\tilde \pi_i$ and $\tilde{\pi}_{i+2}$ concatenable possibly requires to slightly modify their structure. 
To be more specific, if the first stretch of $\tilde\pi_{i+2}$ and/or the last stretch of 
$\tilde\pi_i$ have zero length, then $\tilde\pi_{i+2}$ and 
$\tilde\pi_{i}$ are always concatenable. 
In this case we do not need to change their structure to make them concatenable. 
Otherwise, if the first stretch of $\tilde\pi_{i+2}$ has non-zero length, then
it is always possible to modify the first step in the first stretch of $ \tilde\pi_{i+2}$ to transform it into an inter-stretch, see Figure \ref{fig:Step3}, and after this simple transformation $ \tilde\pi_{i}$ and $ \tilde\pi_{i+2}$ become always concatenable.  
Thus, in the case where $k_1\leq k_2-3$ (the case $k_2\leq k_1-3$ is similar) it suffices to apply the aforementioned transformation to each 
blocks $\tilde \pi_{k_1+2}, \dots, \tilde \pi_{k_2-1}$ and  to concatenate $\tilde \pi_{k_1}, \dots, \tilde \pi_{k_2-1}$ into a unique oriented block, say $\hat \pi_1'$. We remove those empty blocks $\tilde\pi_{i}$ indexed in $ \{1,\dots,k_1-2\}\cap 2\N-1$ and in  $\{1,\dots,k_2-2\}\cap 2\N$
to get finally  the concatenable sequence $(\hat \pi_1',\tilde\pi_{k_2},\dots,\tilde \pi_r)$. The path 
$\hat \lambda :=\hat\pi_1'\oplus\tilde\pi_{k_2}  \oplus\dots \oplus  \tilde\pi_r\in \Omega_{h}^{x}$.

\begin{remark}
\label{remRelou}
It is important to keep in mind that the concatenable sequence $(\hat \pi_1',\tilde\pi_{k_2},\dots,\tilde \pi_r)$ is not a standard decomposition 
of a NE-prudent path, cf. Definition \ref{def:NEPrudentPath}: in this case we do not have any constriction 
on the first stretch of  $\tilde \pi_{k_2}$ and $\tilde\pi_r$ 
(if the last block was changed by the large stretches removing procedure) 
other than to be smaller than $L^{1/4}$, cf. Remark \ref{reMinItem1}. 
It is necessary to slightly redefine $\hat \pi_{1}'$ and $\tilde \pi_{k_2}$ in order to obtain two proper oriented 
 blocks, say $\hat \pi_{1}$ and $\hat \pi_{2}$. 
 We also modify $\tilde\pi_{r-1}$ and $\tilde\pi_r$ in the same way
 to obtain the oriented blocks $\hat\pi_{s-1}$ and $\hat\pi_s$, where $s=(k_1+k_2)/2-2$. We observe that
 we can do this modification to have that $\hat\pi_s\subseteq \tilde\pi_r$.
 In such a way
 the block sequence $(\hat\pi_1,\dots,\hat\pi_s)$ is a proper decomposition of a NE-prudent path.
 We observe that a very crude bound tells us that the number of ancestors of a block by this last transformation 
is bounded above by its total number of stretches, which is smaller than $l$. 
\end{remark} 

\begin{remark}
In principle, if the last stretch of $\tilde\pi_i$ and the first stretch of $\tilde\pi_{i+2}$ have both non-zero
length and the same orientation,
then it would be possible to concatenate $\tilde\pi_i$ with $\tilde\pi_{i+2}$. Anyway, also in this
case we modify the $\tilde\pi_{i+2}$ structure, as prescribed by the aforementioned transformation. 
We do that for computational convenience, as it will be clear in \eqref{uppbsb2} below.
\end{remark}

 The procedure described above corresponds to the mapping $\mathcal R_{l,L}: \Omega_{\leq,L}^{l,x}\mapsto \cup_{h\leq l} \, \Omega_{h}^{x}$.   As we did in Section \ref{singmac}, we need to conclude this section by computing, for $h\leq l\leq L$ and $x\in \{\text{NE, NW, SE, SW}\}$,  the number of ancestors in $\Omega_{\leq,L}^{l,x}$ of a given
$\gamma\in \Omega_h^x$ by $\mathcal R_{l,L}$. 
To that aim, we write  $\gamma:=(\hat \pi_1,\dots,\hat \pi_s) \in  \Omega_{h}^{x}$ and we consider  $\tilde \lambda=(\tilde \pi_1,\dots,\tilde \pi_r)\in \Omega_{\leq,L}^{l,x}$ an ancestor of 
$\gamma$ by $\mathcal{R}_{l,L}$. For simplicity, assume also that $k_1=k_1(\tilde \lambda)\leq k_2(\tilde{\lambda})=k_2$ and recall that, 
by Definition \ref{def:omegaOH}, we have necessarily $k_1,k_2\leq \frac{l}{L^{1/4}}$. Thus, we have necessarily that all blocks 
$(\tilde \pi_1,\tilde \pi_3,\dots,\tilde \pi_{k_1-2})$ and all blocks 
$(\tilde \pi_2,\tilde \pi_4,\dots,\tilde \pi_{k_2-2})$ are empty. 
Moreover, we explained above that $\hat \pi_1$ is essentially obtained by modifying 
the first step of the first stretch of some oriented blocks in
$(\tilde \pi_{k_1}, \tilde \pi_{k_1+2},\dots,  \tilde \pi_{k_2-1})$.
 This suffices to write the following upper bound 
\begin{equation}\label{uppbsb2}
\Big| (\mathcal{R}_{l,L})^{-1}(\gamma)\Big |\leq \sum_{k_1,k_2\leq l/L^{1/4}} l\, 2^{|k_1-k_2|}\binom{l}{\frac{|k_1-k_2|}{2}},
\end{equation}
The summation in \eqref{uppbsb2} runs over $k_1,k_2$ which provides the number of empty blocks at the beginning of the odd and even sequences of blocks in $\tilde \lambda$ and, once $k_1$ and $k_2$ are chosen, one can reconstruct $(\tilde \pi_{k_1}, \tilde \pi_{k_1+2},\dots,  \tilde \pi_{k_2-1})$
from  $\hat  \pi_1$ by decomposing  $\hat  \pi_1$ into $(k_2-k_1)/2$ groups of consecutive stretches. This provides at most 
$\binom{l}{\frac{|k_1-k_2|}{2}}$ choices since the number of stretches in $\hat \pi_1$ is at most $l$.  
Then we have to take in account the transformation we made on  the first step of the first stretch of some 
oriented blocks in $(\tilde \pi_{k_1}, \tilde \pi_{k_1+2},\dots,  \tilde \pi_{k_2-1})$. This provide at most two
configuration for each such block and thus the factor $2^{|k_1-k_2|}$.
The factor $l$ is due to the fact that we have at most $l$ different way to choose $\tilde\pi_{k_{2}-1}$ and $\tilde\pi_{k_2}$ and $\tilde\pi_{r-1}$ and $\tilde\pi_{r}$, cf. Remark \ref{remRelou}.
At this stage, it is sufficient to 
recall that $k_2-k_1\leq l/L^{1/4}$ to rewrite \eqref{uppbsb2} as  
\begin{equation}\label{endup2}
\Big| (\mathcal{R}_{l,L})^{-1}(\gamma)\Big |\leq \frac{l^3}{L^{1/2}}\, 2^{l/L^{1/4}}\, e^{ l \log(L)/L^{1/4}}\leq e^{ c_1 l \log(L)/L^{1/4}},
\end{equation}
for some $c_1\in (0,\infty)$.

\subsubsection{Concatenating block procedure for a generic path}
\label{concatonin}

We are ready to define the map $\psi_{L}^2$  on those generic macro-block sequences from $\mathcal{W}_{2,L}$. 
We recall Definition \ref{eq:W2L}, we pick $m\leq c\sqrt{L}$ and $(L_1,\dots,L_m)\in \N^m$ satisfying 
$L_1+\dots+L_m= L$. Then, we pick 
$$\tilde \Lambda=(\tilde \Lambda_1,\dots,\tilde \Lambda_m)\in \Omega_{\leq,L}^{L_1,x_1}\times\dots\times\Omega_{\leq,L}^{L_m,x_m},$$
and we define $\psi_L^2$ by applying, for every $i\leq m$,
the map $\mathcal{R}_{L_i,L}$ to $\tilde \Lambda_i$, i.e., 
\begin{equation}\label{trhu}
\psi_L^2(\tilde\Lambda):= \big( \mathcal R_{L_1,L}(\tilde \Lambda_1),\dots, \mathcal R_{L_m,L}(\tilde \Lambda_m)   \big).
\end{equation}
The image set of $\mathcal{W}_{2,L}$ by $\psi_L^2$ 
 is therefore denoted by $\mathcal{W}_{3,L}$ and it is  a subset of 
\begin{equation}  \label{eq:W3L}
\bigcup_{m \leq c L^{1/2}}\, \bigcup_{l_1+\dots+l_m\leq L}\, 
 \bigcup_{\substack{(x_i)_{i=1}^m\in \{\tx{NE}, \tx{NW}, \tx{SE}, \tx{SW}\}\\
x_{i-1}\neq x_{i}}} \, 
\Omega_{l_1}^{x_1}\times\dots\times\Omega_{l_m}^{x_m},
 \end{equation}
 where the union over $m$ is truncated at $c L^{1/2}$ thanks to Lemma \ref{lemma:macroblocknumber}.

 \begin{remark}\label{remin}
Let us stress the fact that, as explained in Section \ref{mulb} above,  a given $\tilde \Lambda\in \mathcal W_{2,L}$ may well belong to 
more than one set  of the form $\Omega_{\leq,\tx{vh}}^{L_1,x_1}\times\dots\times\Omega_{\leq,\tx{vh}}^{L_m,x_m}$. This may be confusing because 
the definition of $\psi_L^2$ in \eqref{trhu} seems to depend on the choice of $L_1,\dots,L_m$. However, this is not the case because 
the applications $\mathcal {R}_{l,L}$ do actually not depend on $l$.
\end{remark}

The step will be complete once we show that $\psi_L^2$ is sub-exponential.
To that aim, we need an upper bound on the 
 cardinality of $(\psi_L^2)^{-1}(\hat \Lambda)$ that is  uniform on the choice of $\hat \Lambda\in \psi_L^2\big(\mathcal{W}_{2,L}\big)$. Thus, we pick $\hat \Lambda \in \psi_L^2\big(\mathcal W_{2,L}\big)$ and we consider its macro-block decomposition 
 $(\hat \Lambda_1,\dots,\hat \Lambda_m)$ which belongs to 
$ \Omega_{l_1}^{x_1}\times\dots\times\Omega_{l_m}^{x_m}$ for some $l_1+\dots+l_m\leq L$.
Before counting the number of ancestors of $\hat \Lambda$ by $\psi_L^2$, one should note that 
the ancestors of $\hat \Lambda$  may belong to any set of the form $\Omega_{\leq,L}^{L_1,x_1}\times\dots\times\Omega_{\leq, L}^{L_m,x_m}$
with $L_1+\dots+L_m\leq L$ and $L_i\geq l_i$ for every $i\leq m$. Again, since $m\leq c\sqrt{L}$, the number of such sets is bounded above by 
$\binom{L}{c\sqrt{L}}\leq e^{c\sqrt{L} \log(L)}$.
  It remains to count the number of ancestors of 
$\hat \Lambda$ within  a given $\Omega_{\leq, L}^{L_1, x_1}\times\dots\times\Omega_{\leq, L}^{L_m,x_m}$ and by \eqref{endup2} above,
this is at most  $e^{c_1 L_1 \log(L)/L^{1/4}}\times \dots \times  e^{c_1 L_m \log(L)/L^{1/4}}$ which again is smaller 
than $e^{c_1 L^{3/4}\log(L)}$. This suffices to conclude that $\psi_L^2$ is sub exponential.

\begin{figure}
\includegraphics[scale=0.6]{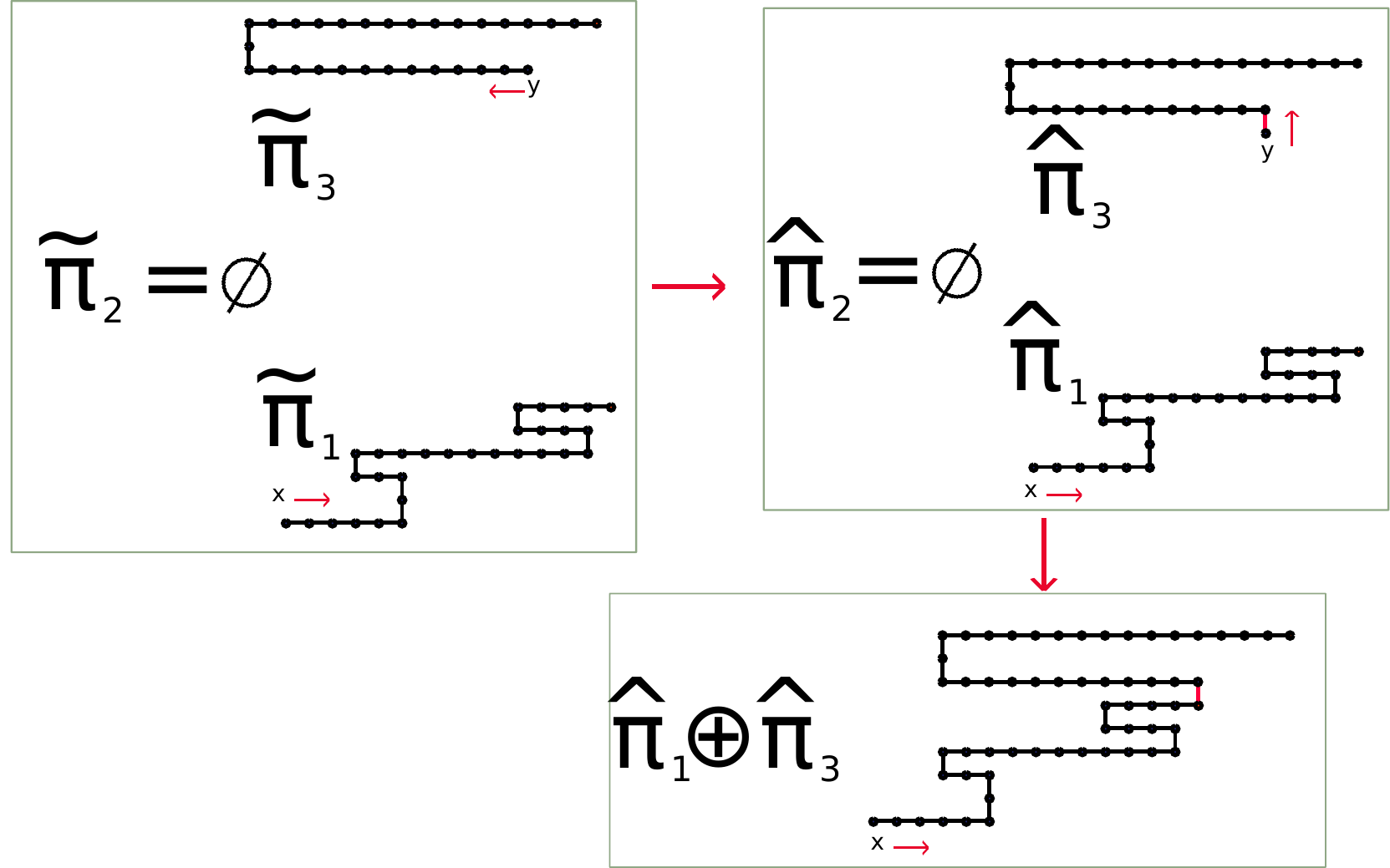}
\caption{We consider a sequence $(\tilde\pi_1,\tilde\pi_2,\tilde\pi_3)$ provided by the {large stretch removing procedure} in Step 2.
In this case we have that the {large stretch removing procedure} has removed the block $\tilde\pi_2$. 
 We modify the first step of the fist stretch of $ \tilde\pi_{3}$ in order to appear artificially an inter-stretch. 
 In such a way we can safely concatenate the blocks 
 $\hat\pi_{1}$ with $\hat\pi_{3}$ in a unique block $\hat\pi_1\oplus\hat\pi_3$.}
 \label{fig:Step3}
\end{figure}

\subsection*{Step 3}
\label{item:s4} 

In this step we consider a macro-block sequence $(\hat\Lambda_1,\dots\hat\Lambda_m)\in \mathcal W_{3,L}$ and we begin by modifying each macro-block $\hat\Lambda_i$ in order to recover a sequence of concatenable macro-blocks with only NE-orientations. Then we concatenate those modified north-east macro-blocks to recover a two sided path.
 In the sequel we refer to such procedures as
 \emph{macro-block concatenating procedure}.
 
 This procedure is defined through the function $\psi_L^3$, which acts on  $\mathcal{W}_{3,L}$ (recall \eqref{eq:W3L}). To be more specific, $\psi_L^3$ takes as an argument 
an element  
\begin{equation}\label{defll}
\hat \Lambda=(\hat \Lambda_1,\dots,\hat \Lambda_m)\in \Omega_{l_1}^{x_1}\times\dots\times\Omega_{l_m}^{x_m} .
\end{equation}
By keeping in mind that $\hat \Lambda$ is in the image set of $\psi_L^2(\psi_L^1)$, in \eqref{defll} $m\leq cL^{1/2}$ by Lemma \ref{lemma:macroblocknumber}, $(l_1,\dots,l_m)\in \N_{0}^m$ is a given 
integer vector such that $l_1+\dots+l_m\leq L$ and 
$(x_1,\dots,x_m)$ is a sequence of orientations. 
As a result, $\psi_L^3$ provides us with  a
north east prudent path of length $l_1+\dots+l_m$, i.e., an element of 
$\Omega^{\tx{NE}}_{l_1+\dots+l_m}$.
 
%
 
 \smallskip 

\subsubsection{Giving a macro-block a north-east orientation}
\label{reorient}
 In this section we pick $l\in \N$, $x$ an orientation and we consider 
$\hat \lambda=(\hat \pi_1,\dots,\hat \pi_r)\in \Omega_l^{x}$ a macro-block such that 
$\hat \pi_r:=(\hat \ell^{\, r}_1,\dots,\hat \ell^{\, r}_{N_r})$ either  satisfies the \emph{upper exit condition}, i.e., 
$\ell_1^{\, r}+\cdots+\ell^{\,r}_{N_r}\, >\, \max_{\, 0\leq i< N_r}\{\ell_1^{\, r}+\cdots+\ell_i^{\, r}\},$
or satisfies the  \emph{ lower exit condition}, i.e., 
$
 \ell_1^{\,r}+\cdots+\ell_{N_r}^{\, r}\, <\, \min_{\, 0\leq i< N_r}\{\ell_1^{\,r}+\cdots+\ell_i^{\, r}\}$ (we recall Definition \ref{def:exit}).

Giving a \emph{north-east} orientation to $\hat{\lambda}$ and making sure that it will be concatenable with 
other north east macro-blocks  
requires to perform $3$ transformations on each $\hat \lambda$. 
Among those $3$ geometric transformations, the first two are simple and the third is more involved and we will describe it carefully below.


To begin with, we recall Section \ref{sec:NE-IPSAW} and we observe that any \emph{two-sided} prudent path can be mapped onto 
a \emph{north-east} prudent path subject to at most two axial symmetries.  Therefore, we map $\hat \lambda$ onto $\hat \lambda_{\tx{NE}}$
and we note that at most $4$ ancestors can be mapped onto the same  
\emph{north-east} macro-block. For simplicity, we keep the notation  
$\hat \lambda_{\tx{NE}}=(\hat \pi_1,\dots,\hat \pi_s)$ and we note that $\hat \pi_s$  still satisfies 
either the  \emph{upper exit condition} or the  \emph{lower exit condition}. 
At this stage, we need to make sure that $\hat \lambda_{\tx{NE}}$ will be concatenable with other  \emph{north-east} macro-blocks. To that aim, 
we  follow the procedure described in Step 2, i.e., in case  $\hat \pi_1$ does not start by an inter-stretch ($\ell^1_1 \neq 0$) 
we modify the first step of its very first stretch, in such a way that this step becomes an inter-stretch.
This amounts to add a zero-length stretch at the beginning of $\hat \pi_1$ and to reduce the length of $\ell_1^1$ by one unit. By reasoning as in Step 2,
this second transformation maps at most two macro-blocks onto the same macro-block.

After these first two transformations, we can not yet claim that $\hat \lambda_{\tx{NE}}$ is concatenable with any other north-east 
macro-blocks. The macro-block $\hat \lambda_{\tx{NE}}$ is indeed concatenable if $\hat \pi_s$, the last oriented block of  $\hat \lambda_{\tx{NE}}$, satisfies the 
\emph{upper exit condition}, but we have seen that it may well satisfy the \emph{lower exit condition}. In this last case,
we need to apply a third transformation to $\hat \lambda_{\tx{NE}}$ to make sure that its last block satisfies the 
\emph{upper exit condition}. For this purpose we recall that $\hat\pi_{s-1}$ and $\hat\pi_s$ are obtained as a slight
modification of $\tilde\pi_{r-1}$ and $\tilde\pi_r$ and $\hat\pi_s\subseteq \tilde\pi_r$, cf. Section \ref{concatonesin} and Remark \ref{remRelou}. 
Moreover, we recall that 
 $\tilde\pi_r$ is the result of the the large stretch removing procedure applied to $\pi_r$,
  thus, the length of its first stretch is smaller than $L^{1/4}$. This ensures that 
there exists a partially directed path $\pi$ contained in $\hat\pi_{s-1}\cup \hat\pi_s$ and that contains $\hat\pi_1$ such that its first stretch is smaller than $L^{1/4}$. Moreover, $\pi$ has the same orientation of $\hat\pi_s$. For instance in Figure \ref{fig:FlipBlock1} we draw a case where $\pi=\hat\pi_1$. To be more specific,
 if $\hat \pi_{s-1}:= (\ell_{1}^{\, s-1}, \dots,\ell_{N_{s-1}}^{\,s-1})$ and $\hat \pi_s:= (\ell_{1}^{\, s}, \dots,\ell_{N_s}^{\,s})$, 
then either there exists $k\leq N_{s-1}^{\, s-1}$ such that $ \pi=(\ell_{k}^{\, s-1}, \dots,\ell_{N_{s-1}}^{\,s-1}, \ell_{1}^{\, s}, \dots,\ell_{N_s}^{\,s})$, or $\pi=\hat\pi_s$ (and thus $|\ell_1^{s}|\leq L^{1/4}$). 
The choice of $\pi$ could be not unique. 
To overstep this problem, among all the possible candidates for $\pi$, we choose the one with the minor number of stretches
 which contains $\hat\pi_1$.
Therefore we replace
$\pi$ by $-\pi:= (-\ell_{k}^{\, s-1}, \dots,-\ell_{N_s}^{\,s})$ inside $\hat\pi_{s-1}\cup \hat\pi_s$. It is easy to check that 
after this last transformation, $\hat \pi_s$ achieves the upper exit condition.
However, after this transformation it could be necessary to slightly redefine $\hat \pi_{s-1}$ and $\hat \pi_{s}$ in order to obtain two proper oriented 
 blocks, say $\hat \pi_{s-1}'$ and $\hat \pi_{s}'$,
as pictured in Figure \ref{fig:FlipBlock1}. 
A very crude bound tells us that the number
of ancestors of a macro-block by this last transformation 
is bounded above by its total number of stretches, which is smaller than $l$. 


The procedure described above corresponds to the application 
$\mathcal A_l$ taking as an argument any $\hat \lambda\in \Omega_{l}^x$ such that the last block of $\hat \lambda$
satisfies either the \emph{upper  exit condition} or the \emph{lower exit condition} and maps it onto 
some $\hat \lambda_{\tx{NE}}\in \Omega_l^{\tx{NE}}$. We conclude that, for every $\gamma\in  \Omega_l^{\tx{NE}}$, we have 
\begin{equation}\label{endup3}
\Big| (\mathcal{A}_{l})^{-1}(\gamma)\Big |\leq  8l.
\end{equation}

\subsubsection{Macro-block concatenating procedure}
We consider a given $\hat \Lambda=(\hat\Lambda_1,\dots\hat\Lambda_m)\in \mathcal W_{3,L}$ and we recall \eqref{defll} so that 
$\hat \Lambda\in \Omega_{l_1}^{x_1}\times\dots\times\Omega_{l_m}^{x_m}$. 
At this stage, it is crucial to understand why, except maybe for $j=m$, all non empty macro-blocks $\hat \Lambda_j$ from $\hat \Lambda$
have a last oriented block that satisfies either the \emph{upper  exit condition} or the \emph{lower exit condition}.
To this purpose we consider $\Lambda_j=(\pi_1,\dots,\pi_{r_j})$ the  ancestor of $\hat \Lambda_j=(\hat \pi_1,\dots,\hat \pi_{\hat r_j})$ by $\psi_L^2\,  o \, \psi_L^1$. There are two alternatives at this stage: either the \emph{large stretch removing procedure} in Step 1 has completely removed $\pi_{r_j}$ 
 and then  $\hat \pi_{\hat r_j}$ is associated with one of the $(\pi_k)_{k\leq r_j-1}$ which all satisfy either the \emph{upper exit condition} or the \emph{lower exit condition}, or $\hat \pi_{\hat r_j}$ is associated with $\pi_{r_j}$. In this last case, we recall that the very last stretch of $\pi_{r_i}$  (which is also the last stretch of  $ \Lambda_j$) must cross all the macro-block so that a new macro-block with a different orientation can start (see Figure \ref{fig:IPDRW9} or Figure \ref{fig:FlipBlock1}
 ). This last condition,  depending on the orientation of $\Lambda_i$, implies 
 that $\pi_{r_j}$ also satisfies either the \emph{upper exit condition} or the \emph{lower exit condition} and so do 
 $\hat \pi_{\hat r_j}$.
 
We are now ready to define $\psi_L^3$.  We begin with deleting the empty macro-blocks 
in $\hat \Lambda$, so that it becomes $(\hat \Lambda_{i_1},\dots, \hat \Lambda_{i_{\overline m}})\in \Omega_{l_{i_1}}^{x_{i_1}}\times\dots\times\Omega_{l_{i_{\overline m}}}^{x_{i_{\overline m}}}$, where  $(l_{i_1},\dots,l_{i_{\overline m}})$ is the subsequence of $(l_1,\dots,l_m)$ containing only its non-zero elements. Then we set 
\begin{equation}\label{trhuu}
\overline \Lambda= \big(\overline \Lambda_{i_1},\dots,\overline \Lambda_{i_{\overline{m}}}\big):=\big( \mathcal A_{l_{i_1}}(\hat  \Lambda_{i_1}),\dots, \mathcal A_{l_{i_{\overline m}}}(\hat \Lambda_{i_{\overline m}})\big)\in \Omega_{l_{i_1}}^{\tx{NE}}\times\dots\times\Omega_{l_{i_{\overline m}}}^{\tx{NE}}
\end{equation}
and we let $\psi_L^3(\hat \Lambda)$ be the two-sided path obtained by concatenating all the macro-blocks in  $\overline \Lambda$, i.e.,  
\begin{equation}\label{psiL3}
\psi_L^3(\hat \Lambda)= \overline \Lambda_{i_1}\oplus \dots \oplus \overline \Lambda_{i_{\overline{m}}}.
\end{equation}
As a result, the image set of $\mathcal W_{3,L}$ by $\psi_L^3$ is denoted by $\mathcal W_{4,L}$ and it is a  subset of $\cup_{n=1}^{L} \Omega_n^{\tx{NE}}$. 

 \begin{figure}
 \includegraphics[scale=0.5]{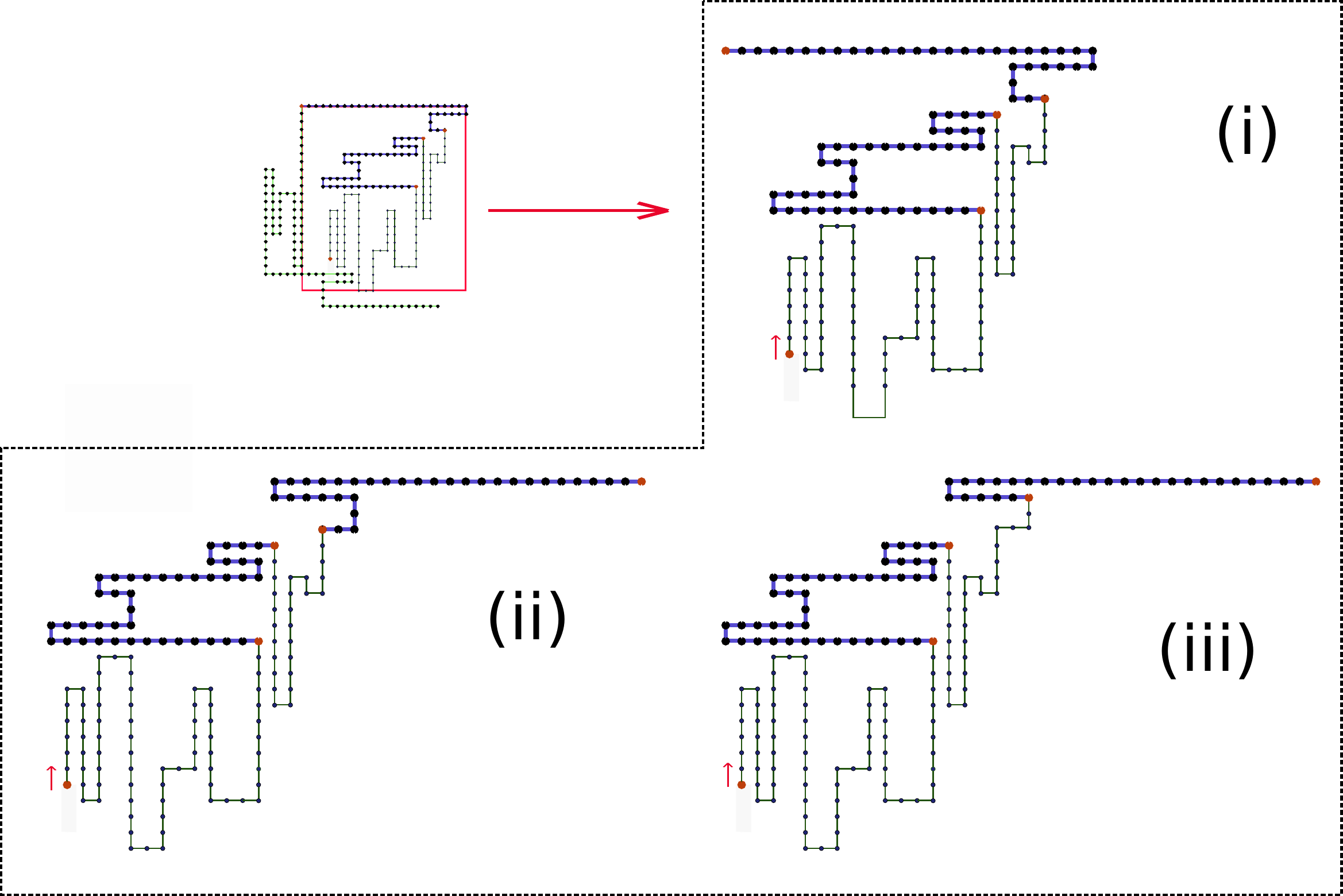}
 \caption{A prudent path obtained by the concatenation of two macro-blocks. 
 We zoom in on the first one, boxed in the rectangle. It has a NE-orientation.
 In $(\texttt i)$ we observe that its last block does not achieves the upper exit condition, but 
 it satisfies the lower exit condition. 
 Therefore, in $(\texttt{ii})$ we apply a spatial symmetry to the last block 
  in such a way that it satisfies the upper exit condition. 
  This changes the structure of the last two blocks. In $(\texttt{iii})$ we redefine the last two blocks.}
 \label{fig:FlipBlock1}
 \end{figure}

The step will be complete once we show that $\psi_L^3$ is sub-exponential.
To that aim, we need an upper bound on the 
 cardinality of $(\psi_L^3)^{-1}(\Gamma)$ that is  uniform on the choice of $\Gamma\in \mathcal{W}_{4,L}$. Thus,
we pick $\Gamma \in \mathcal{W}_{4,L}$, say 
$\Gamma \in \Omega_n^{\tx{NE}}$ with $n\leq L$ and we reconstruct an ancestor $\hat \Lambda$ of $\Gamma$ by $\psi_L^3$. 
We must first choose $m\leq c L^{1/2}$
the number of macro-blocks in $\hat \Lambda$, then choose $\overline m$ the number of non empty blocks in 
$\hat \Lambda$. Then, we must choose the indices of those non-empty macro-blocks which gives us  
$\binom{m}{\overline m}$ possibilities and their lengths $l_{i_1},\dots,l_{i_{\overline m}}$. Once, the latter is done 
it remains to identify the sequence  $(\overline \Lambda_{i_1},\dots,\overline \Lambda_{i_{\overline m}})$ (recall \ref{trhuu})
an we can apply \eqref{endup3} to conclude that the total number of ancestors is bounded above by 
\begin{equation}\label{tgd}
|(\psi_L^3)^{-1}(\Gamma)|\leq \sum_{\overline m\leq m\leq c L^{1/2}}  \sum_{l_{i_1}+\dots+l_{i_{\overline m}}=n}
\binom{m}{\overline m} \prod_{j=1}^{\overline m} 8  l_{i_j} 
\end{equation}
and the r.h.s. in  \eqref{tgd} is smaller than $e^{c_3 L^{1/2} \log L}$ for some $c_3>0$.

\subsection*{Step 4}\label{item:s5} 
In this step we conclude our transformation of the prudent path by showing how we concatenate all stretches picked off by the large stretch removing procedure (cf. Step 1) with the rest of the NE-prudent path provided by Steps 1-3. The result will be a NE-prudent path of length $L$. 

\smallskip

We pick $\Gamma\in \mathcal{W}_{4,L}$, say $\Gamma \in \Omega_n^{\tx{NE}}$ and we denote by $S_{L-n}$
the west-east block of length $L-n$ that maximizes the energy, i.e, $S_{L-n}$ is made of $(L-n)^{1/2}$ vertical stretches of alternating signs of length $(L-n)^{1/2}-1$ each. Then, the image of  $\Gamma$ by $\psi_L^4$ is obtained by concatenating $S_{L-n}$ with $\Gamma$, i.e., 
$$\psi_L^4(\Gamma)=S_{L-n}\oplus \Gamma.$$
The image set of $\mathcal{W}_{4,L}$ by $\psi_L^4$, $\mathcal{W}_{5,L}$, is a subset of $\Omega_L^{\tx{NE}}$ and the number of ancestors of an element in 
$\Omega_L^{\tx{NE}}$ by $\psi_L^4$ is clearly less than $L$, which completes the step.

%
 
%
 %
 
 \medskip
 
 \subsection*{Step 5} 
 
We recall that the composition of those maps $\psi_L^4, \dots, \psi_L^1$ is denoted by $M_L$. In this last step we 
are going to control the energy lost
when we apply $M_L$ to a given $\omega\in \Omega_L^{\tx{PSAW}}$. We aim at showing that 
$H\,(\omega)-H\,(M_L(\omega))=o(L)$  uniformly  on $\omega\in \Omega_L^{\tx{PSAW}}$. 
  
\begin{remark}\label{remint}
 We observe that the image of $\Omega_L^{\tx{PSAW}}$ by  $\psi_L^2 \, \circ\, \psi_L^1$, that is $\mathcal{W}_{3,L}$, contains families 
 of macro-blocks that are a priori not  concatenable. For this reason, we recall \eqref{defll} and  we define the energy of an element
$$\hat \Lambda=(\hat \Lambda_1,\dots,\hat \Lambda_m)\in \Omega_{l_1}^{x_1}\times\dots\times\Omega_{l_m}^{x_m}\in \mathcal{W}_{3,L}$$ as the 
sum of the energies of  its macro-blocks, i.e., 
\begin{equation}
H\,(\hat \Lambda)=\sum_{x=1}^{m} H_{l_x}(\hat \Lambda_x).
\end{equation}
  The sets $\mathcal{W}_{4,L}$ and $\mathcal{W}_{5,L}$, in turn, only contain prudent paths whose  energies are well defined by \eqref{eq:ham1}.
\end{remark}

In part (a) of the proof below we will show that the energy lost when applying $\psi_L^2 \, o\, \psi_L^1$ to a given $\omega\in \Omega_L^{\tx{PSAW}}$
is not larger than $\tilde L+c_1L^{3/4}$ with $c_1>0$ and $\tilde L$ the total length of those stretches removed by the \emph{large stretch removing procedure}.
In part (b) we will show that the mapping $\psi_L^3$ induces at most a loss of energy bounded by $c_2 L^{3/4}$ with $c_2>0$ and finally 
in part (c) we will observe that the gain of energy associated with $\psi_L^4$ is $\tilde L-\tilde L^{1/2}$, which will be sufficient to 
conclude.

\medskip

\begin{enumerate}
\item[(a)]
We pick $\omega\in \Omega_L^{\tx{PSAW}}$ and we denote by 
 $\Lambda=(\Lambda_1,\dots,\Lambda_m)$ its macro-block decomposition. We set
 $\hat \Lambda=(\hat \Lambda_1,\dots,\hat\Lambda_m)= \psi_L^2\, \circ\, \psi_L^1(\Lambda)$. Because of the definition of 
 $H\,(\hat \Lambda)$ in remark \ref{remint},  the interactions between the different macro-blocks of $\Lambda$ 
 do not contribute anymore to the computation of $H\,(\hat \Lambda)$. The next remark allows us to control the sum of the
 interactions between different macro-blocks of $\Lambda$.
 \begin{remark}\label{rthj}
 For $j\in \{1,\dots,m\}$, we let $\ell_1^j$ (resp. $\ell_2^j)$ be the first stretch of the subsequence of odd (resp. even) blocks of $\Lambda_j$.  
 Because of the oriented structure of any macro-block, for every $j=2,\dots, m$, it turns out that
 $\Lambda_j$ interacts with 
 $\Lambda_1\oplus\dots\oplus \Lambda_{j-1}$ only through $\ell_1^j,\, \ell_2^j$ and the number of self-touching between 
 $\Lambda_j$ and  $\Lambda_1\oplus\dots\oplus \Lambda_{j-1}$ is bounded from above by $|\ell_1^j|+|\ell_2^j|$ (see Figure \ref{fig:IPDRW9}).
 \end{remark}
 As a consequence of Remark \ref{rthj}, the energy provided by the interactions between the different macro-blocks of $\Lambda$ is  
 bounded above by $A_1+A_2$ with 
 \begin{align}\label{AA}
 A_1&=\sum_{j=1}^m \left(\, | \ell_1^j|\,  \ind_{\big\{|\ell_1^j|\leq L^{1/4}\big \}}\, +\, |\ell_2^j|\,  \ind_{\big\{|\ell_2^j|\leq L^{1/4}\big \}}\, \right)\\
\nonumber  A_2&=\sum_{j=1}^m \left(\, |\ell_1^j|\,  \ind_{\big\{|\ell_1^j|> L^{1/4}\big \}}\, +\, |\ell_2^j|\,  \ind_{\big\{|\ell_2^j|> L^{1/4}\big \}} \, \right).
 \end{align}
 
 Then, the energy lost during the  transformation of  $\Lambda$ into $\hat \Lambda$ comes on the one hand  from the loss of those interactions between macro-blocks and on the other hand from the energy lost inside every macro-blocks due to the {\it large stretch removing procedure}.
As a consequence, we can write 
\begin{equation}\label{hru}
H\,(\Lambda)-H\,(\hat \Lambda)\leq A_1+A_2+ \sum_{s=1}^m \big(\, H(\Lambda_s)-H(\hat \Lambda_s)\, \big),
\end{equation}
where we recall that for every $s\in \{1,\dots,m\}$, we have $\hat \Lambda_s=\mathcal{R}_{t_s,L}\,\circ\, \mathcal{T}_{t_s,L}(\Lambda_s)$
with $t_s$ the total length of $\Lambda_s$.

At this stage, for $s\in \{1,\dots,m\}$, we need to bound the energy lost in $\Lambda_s$ due to the large stretch removing procedure.
We let $\tilde L_s$ be the total length of those stretches that have been removed and we claim that 
\begin{equation}\label{thsfu}
H(\Lambda_s)-H(\hat \Lambda_s)\leq \tilde L_s-|\ell_1^s|\,   \ind_{\big\{|\ell_1^s|> L^{1/4}\big \}}-|\ell_2^s|\,  \ind_{\big\{|\ell_2^j|> L^{1/4}\big \}}+2L^{1/4}.
\end{equation}
To understand \eqref{thsfu} we must keep in mind that the number of self-touching between two stretches 
is bounded above by the length of the smallest stretch involved. 
This implies that, in the odd subsequence of blocks of $\Lambda_s$, 
the number of self-touching between the first and the second stretch is bounded by the length of the second one.
 Therefore, in the odd subsequence of blocks of $\Lambda_s$, 
 the number of self-touching that are lost when applying the last stretch removing procedure 
is smaller than the sum of all stretches removed in the odd subsequence of oriented blocks minus the length of the very first stretch $\ell_1^s$,
plus the length of the first stretch that has not been removed which, by definition is smaller than $L^{1/4}$. Of course, the same is true for the even subsequence and this explains \eqref{thsfu}.

At this stage, we combine (\ref{AA}\, --\, \ref{thsfu}) and we use the bound $m\leq cL^{1/2}$ (which implies $A_1\leq 2 cL^{3/4}$) to conclude that 
\begin{equation}\label{hru2}
H\,(\Lambda)-H\,(\hat \Lambda)\leq \sum_{s=1}^m \tilde L_s+ 4 c L^{3/4}.
\end{equation}

\item[(b)] 
Note that some energy may also be lost in every macro-block during the third transformation described in Section \ref{reorient}, that is, in the construction of
$\psi_L^3$. Recall \eqref{trhuu}  and the fact that the image of $\hat \Lambda$ by $\psi_L^3$ is denoted by 
$\overline \Lambda$ and has a macro-block decomposition denoted by $(\overline \Lambda_{i_1},\dots,\overline \Lambda_{i_{\overline m}})$. 
Pick $ s\in \{1,\dots,\overline m\}$ and note that after the first two transformations described in Section \ref{reorient}, the macro-block $\hat \Lambda_{i_s}$
has a north-east orientation. 
In case the very last macro-block of $\hat \Lambda_{i_s}$ already  satisfies the upper exit condition, then the third transformation does nothing and $\hat \Lambda_{i_s}=\overline  \Lambda_{i_s}$. 
In case the very last macro-block of $\hat \Lambda_{i_s}$  
satisfies the lower  exit condition,  we observe that 
 it means necessarily that  the {large stretch removing procedure} 
has not removed completely the very last block  of $\Lambda_{i_s}$. 
Therefore, we apply the third transformation that changes the sign of every stretches in the last block and, if its first stretch is larger
 than $L^{1/4}$, then the third transformation also changes the sign of the stretches of $\hat \Lambda_{i_{s-1}}$ between its last
  stretch smaller than $L^{1/4}$ and its very last stretch.
   The existence of such stretch is ensured by the {large stretch removing procedure} 
that we applied to the very last block  of $\Lambda_{i_s}$, as we discussed in. Section \ref{reorient}.
Therefore, by definition, in the third transformation we have lost at most $L^{1/4}$ contacts and   
consequently  
\begin{equation}\label{stecko}
H(\hat \Lambda)-H(\overline \Lambda)\leq \sum_{s=1}^{\overline m} \big(\, H(\hat \Lambda_{i_s})-H(\overline \Lambda_{i_s})\, \big) \leq \overline m L^{1/4}\leq cL^{3/4}.
\end{equation}

\item[(c)]
With the help of \eqref{hru} and \eqref{stecko} above we have proven that for every $\Lambda\in \Omega_L^{\tx{PSAW}}$, by letting $\overline 
\Lambda$ be the image of $\Lambda$ by $\psi_L^3\, \circ\, \psi_L^2 \, \circ\, \psi_L^1$, it holds that
\begin{equation}\label{stecko2}
H(\Lambda)-H(\overline \Lambda)\leq \sum_{s=1}^{m} \tilde L_s+ 5 c L^{3/4}.
\end{equation}
For notational convenience we set $\tilde L:=\sum_{s=1}^{m} \tilde L_s$.
In Step 4, we have built $M_L(\Lambda)$  by concatenating a square block of length $\tilde L$ with $\overline \Lambda$. The 
interactions inside the large square block are $\tilde L-2 \tilde L^{1/2}$ and therefore 
\begin{equation}\label{stecko3}
H(M_L(\Lambda))\geq \tilde L-2 \tilde L^{1/2}+H(\overline \Lambda).
\end{equation}
Finally, (\ref{stecko2}\,--\, \ref{stecko3}) imply that for every $L\in \N$ and every $\Lambda\in \Omega_L^{\tx{PSAW}}$,
\begin{equation}\label{stecko4}
H(\Lambda)-H(M_L(\Lambda))\leq  2 \tilde L^{1/2}+ 5c L^{3/4}\leq  2 L^{1/2}+ 5c L^{3/4},
\end{equation}
and this completes the proof.


\end{enumerate}

 \section{Proof of Theorem \ref{Thm3} }
\label{proofThm3}
We pick $L\in \N$ and we consider $\mathcal{S}_L$ the partially directed path that maximizes the self-touching number. 
We have  already seen in Step 4 of Section \ref{proof of thh1} that $\mathcal{S}_L$  is made of $\sqrt{L}-1$ vertical stretches of length $\sqrt{L}$ each and  that 
$H(\mathcal{S}_L)=L-2\sqrt{L}$. Our proof goes as follows: for every $\epsilon\in (0, 1/60)$ we build the set of path 
$\mathcal{G}_{\epsilon,L}\subset \Omega^{\tx{ISAW}}_L$ such that   for every 
$L$ and $\epsilon$ 
\begin{enumerate}
\item $H(\pi)= H(\mathcal{S}_L)-13 \epsilon L$, for every $\pi \in \mathcal{G}_{\epsilon,L}$,
\item $|\mathcal{G}_{\epsilon,L}|=\binom{ L/60}{\epsilon L}.$
\end{enumerate}
As  a consequence
\begin{align}\label{tautauju}
\nonumber F^{\tx{ISAW}}(\beta)&:=\liminf_{L\to \infty} \frac{1}{L} \log Z_{\beta,L}^{\tx{ISAW}}\geq \sup_{\epsilon>0}\bigg\{\lim_{L\to \infty} \frac{1}{L} \log \binom{L/60 L}{\epsilon L} + \frac{\beta}{L} \big( H(\mathcal{S}_L)-13\epsilon L\big)\bigg\}\\
&\geq \beta+  \sup_{\epsilon>0}\bigg\{\lim_{L\to \infty} \frac{1}{L} \log \binom{L/60}{\epsilon L} -13\,  \beta\,  \epsilon \bigg\},
\end{align}
and this completes the proof since the supremum of the r.h.s. in \eqref{tautauju} is strictly positive because of 
our choice of $\epsilon$.

\begin{figure}
\includegraphics[scale=0.5]{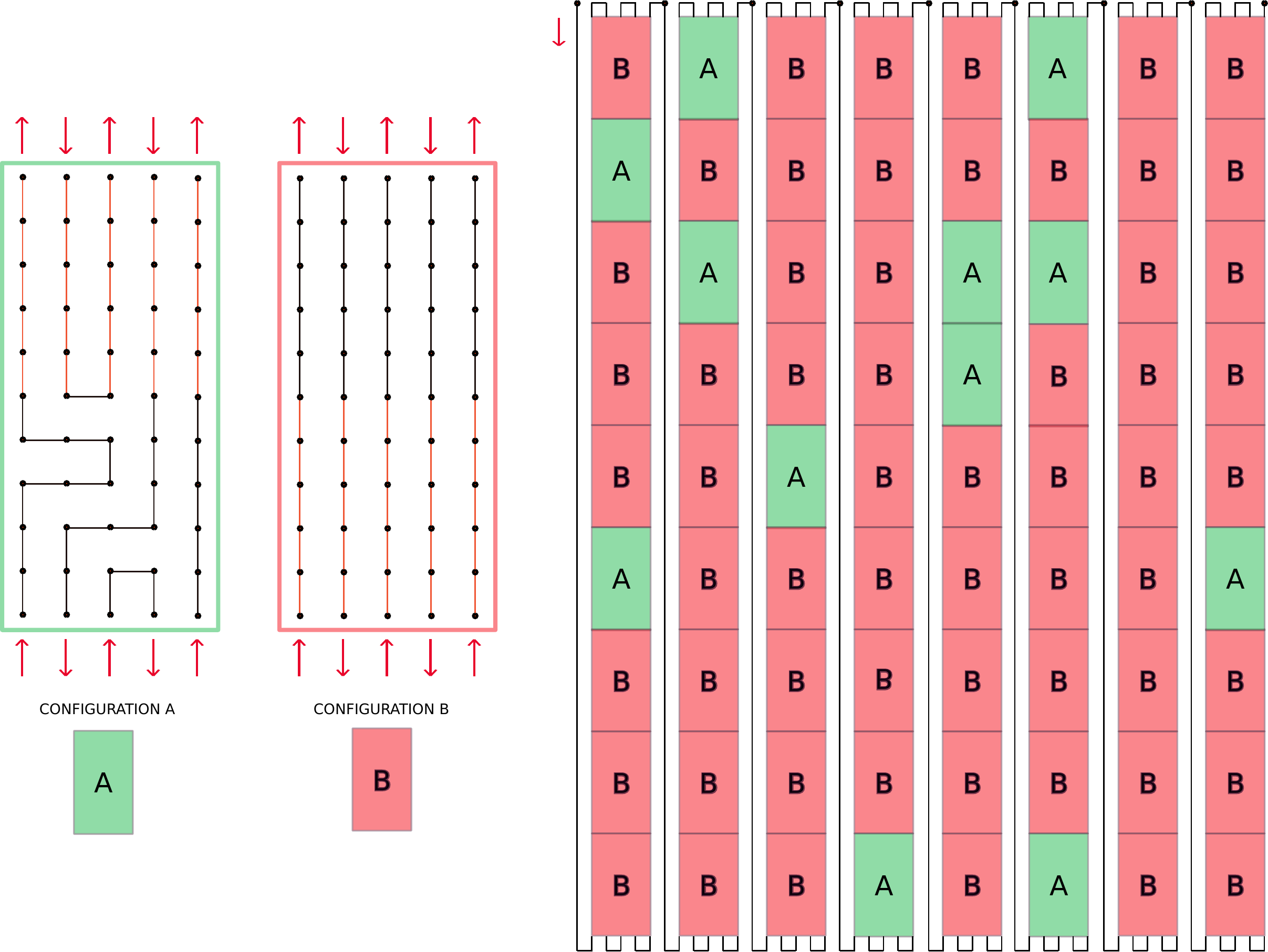}
\caption{On the left configuration $A$ and  $B$  are drawn. The big squared block of size $\sqrt{L}$ on the right is subdivided into 
$L/60$ rectangular boxes, each of them can be filled with configuration $A$ or $B$ without changing the fact that the resulting path 
is self-avoiding. The set $\mathcal{G}_{\epsilon, L}$ contains all path obtained by filling $\epsilon L$ boxes with configuration $A$ and 
the all the others with configuration $B$.\\
Note, in the picture you have to run over the path by starting on the left top, following the 
direction given by the arrow.
This forces to cross any configuration $A$ and $B$ in a unique way, marked by the arrow on the left side of the 
picture.}
 \label{fig:ISAWB}
\end{figure}

It remains to build the sets $\mathcal{G}_{\epsilon,L}$. First, we partition the collections of $\sqrt{L}-1$
vertical stretches of $\mathcal{S}_L$ into groups of $6$ consecutive vertical stretches and then each group 
is divided vertically into rectangles of heights $10$. This gives us a total of $L/60$ rectangular boxes. On the left hand side of  Figure \ref{fig:ISAWB} two configurations (denoted by $A$ and $B$) are drawn and  each of them is  made of $60$ steps. An important feature of configurations $A$ and $B$ is that one can fill every rectangular box with an $A$ or with a $B$ configuration (see the r.h.s. of Figure \ref{fig:ISAWB}) and still recover a self-avoiding path of size $L$.   
The $\mathcal{S}_L$ path is obtained by filling all boxes with configuration $B$. We also note that filling a box with an
$A$ configuration provides exactly $13$ self-touching less than filling the same box with a $B$ configuration. 

The  set $\mathcal{G}_{\epsilon,L}$ contains all paths obtained by filling the $L/60$ boxes with $\epsilon L$ blocks of type $A$ 
and $L(\frac{1}{60 }-\epsilon)$ blocks of type $B$. Thus, the cardinality of $\mathcal{G}_{\epsilon,L}$ is $\binom{L/60}{\epsilon L}$
and the Hamiltonian of every path in $\mathcal{G}_{\epsilon,L}$ is equal to  $H(\mathcal{S}_L)-13 \epsilon L$. This completes the proof.


\section{Free Energy: convergence in the right hand side of \eqref{eq:FreeEnergyIPSAW} }
\label{app:freeenergy}
The goal of this section is to prove the existence of the free energy for the NE-prudent walk. 
For this purpose, we aim at using a super-additive argument, cf. Proposition A.12 in \cite{GB07}.
It turns out that the sequence $\left(\mathrm Z_{\beta,\, L}^{\texttt{NE}}\right)_{L\in\mathbb N}$ is not $\log$ super-additive, therefore we introduce a super-additive process, for which the free energy exists, and we show that it rounds up/down $\mathrm Z_{\beta,\, L}^{\texttt{NE}}$.


\smallskip

The energy associated with a path is described by an Hamiltonian function $\mathrm H\,(w)$, cf. \eqref{eq:ham1}. 
We let  $\Omega_L^{\mathtt{NE},*}\subseteq \Omega_L^{\mathtt{NE}}$ be the set of the whole NE-prudent paths for which the upper
exit condition is satisfied by all the blocks of the path and we let $\tilde\Omega_L^{\mathtt{NE},*}\subseteq \Omega_L^{\mathtt{NE},*}$ be the set of the $\mathtt{NE}$-prudent paths in 
$\Omega_L^{\mathtt{NE},*}$ for which the first stretch of the path is equal to $0$. 
We let 
$\mathrm Z_{\beta,\, L}^{\texttt{NE}, *}$ and $\tilde{\mathrm Z}_{\beta,\, L}^{\texttt{NE},*}$
be the partition functions associated with these sets respectively.
In the next lemma we prove that $\tilde{\mathrm Z}_{\beta,\, L}^{\texttt{NE},*}$ is $\log$ super-additive.
\begin{lemma}
\label{lemma:SA*0}
The sequence $\left(\tilde{\mathrm Z}_{\beta,\, L}^{\texttt{NE},*}\right)_{L\in \mathbb N}$ is $\log$ super-additive. As a consequence, the free energy $\tilde{\mathrm F}^{\mathtt{NE},*}(\beta)$ exists ant it is finite, i.e.,
$$
\tilde{\mathrm F}^{\mathtt{NE},*}(\beta):=\lim_{L\to\infty}\frac{1}{L}\log \tilde{\mathrm Z}_{\beta,\, L}^{\texttt{NE},*} =\sup_{L\in\mathbb N} \frac{1}{L}\log \tilde{\mathrm Z}_{\beta,\, L}^{\texttt{NE},*}<\infty.
$$
\end{lemma}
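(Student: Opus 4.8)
**Proof plan for Lemma \ref{lemma:SA*0}.**

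The plan is to establish the super-multiplicativity inequality
$$
\tilde{\mathrm Z}_{\beta,\, L_1}^{\texttt{NE},*}\,\tilde{\mathrm Z}_{\beta,\, L_2}^{\texttt{NE},*}\ \le\ \tilde{\mathrm Z}_{\beta,\, L_1+L_2}^{\texttt{NE},*}
$$
for all $L_1,L_2\in\mathbb N$, and then invoke the standard Fekete-type lemma (Proposition A.12 in \cite{GB07}) to deduce both the existence of the limit and the identity with the supremum. To prove the inequality, I would take two paths $w^{(1)}\in\tilde\Omega_{L_1}^{\texttt{NE},*}$ and $w^{(2)}\in\tilde\Omega_{L_2}^{\texttt{NE},*}$ and show that their concatenation $w^{(1)}\oplus w^{(2)}$ (in the sense of the $\oplus$ operator of Section \ref{decomp}) is again an element of $\tilde\Omega_{L_1+L_2}^{\texttt{NE},*}$, and that this concatenation map is injective. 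Injectivity is immediate: since every path in $\tilde\Omega_L^{\texttt{NE},*}$ has a unique block decomposition and every block satisfies the upper exit condition, the lengths $L_1$ and $L_2$ can be recovered from $w^{(1)}\oplus w^{(2)}$ by reading off the block structure — there is a well-defined ``cut point'' after the blocks summing to length $L_1$.

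The key point is that concatenation stays inside the class and does not destroy energy. First I would check concatenability: because every block of $w^{(1)}$ satisfies the upper exit condition (this is exactly the ``$*$'' constraint), in particular its last block does, so $w^{(1)}$ ends above its whole range in the relevant coordinate; and because $w^{(2)}$ begins with a zero first stretch (the tilde constraint), it begins with an inter-stretch in the direction that lets it be glued on without collision — this is precisely the mechanism used to concatenate oriented blocks in Definition \ref{def:NEPrudentPath}. Alternating the two orientations appropriately at the junction keeps the resulting path NE-oriented and prudent, so $w^{(1)}\oplus w^{(2)}\in\Omega_{L_1+L_2}^{\texttt{NE},*}$; and since $w^{(1)}$ itself starts with a zero stretch, so does the concatenation, giving membership in $\tilde\Omega_{L_1+L_2}^{\texttt{NE},*}$. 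For the Hamiltonian, since the interaction in \eqref{eq:ham1} only rewards pairs of monomers at lattice distance one, we have $\mathrm H(w^{(1)}\oplus w^{(2)})\ge \mathrm H(w^{(1)})+\mathrm H(w^{(2)})$ — the right-hand side simply omits the (non-negative) self-touchings created between the two halves at the junction. Summing $e^{\beta\mathrm H}$ over all pairs then yields the claimed inequality.

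Finally, finiteness of the free energy: $\tilde{\mathrm Z}_{\beta,\,L}^{\texttt{NE},*}\le \mathrm Z_{\beta,\,L}^{\texttt{NE}}\le \mathrm Z_{\beta,\,L}$, and the latter is trivially bounded by $4^L e^{\beta L}$ (at most $4^L$ paths of length $L$, each with at most $L$ self-touchings — or more crudely $\binom{L}{2}$), so $\tfrac1L\log\tilde{\mathrm Z}_{\beta,\,L}^{\texttt{NE},*}$ is bounded above uniformly, and the supremum is finite. The main obstacle I expect is the careful geometric verification that the concatenation of two tilde-starred NE-prudent paths is genuinely prudent and NE-oriented when one pays attention to the alternation of west-east and south-north blocks at the seam; the two boundary constraints (upper exit condition on all blocks of $w^{(1)}$, zero first stretch of $w^{(2)}$) are designed exactly so that no new collision or orientation violation can occur, but spelling this out requires a small case analysis on the orientation of the last block of $w^{(1)}$ versus the first block of $w^{(2)}$, essentially a repetition of the argument already carried out in Definition \ref{def:NEPrudentPath}.
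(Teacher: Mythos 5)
Your proposal is correct and follows essentially the same route as the paper: super-multiplicativity via concatenation of two paths in the restricted class (using the upper exit condition of $w^{(1)}$ and the zero first stretch of $w^{(2)}$ to guarantee concatenability), the inequality $\mathrm H(w^{(1)}\oplus w^{(2)})\ge \mathrm H(w^{(1)})+\mathrm H(w^{(2)})$, Fekete's lemma, and a crude $e^{\beta L}\cdot 4^L$-type bound for finiteness. You are in fact slightly more careful than the paper, which does not spell out the injectivity of the concatenation map nor the case analysis at the seam.
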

\begin{proof}
We start by showing the super-additivity. We pick $0\leq L'\leq L$ and we consider two paths $w_1\in \tilde\Omega_{L'}^{\mathtt{NE},*}$ and $w_2\in \tilde\Omega_{L-L'}^{\mathtt{NE},*}$. 
We note that we can safely concatenate 
$w_1$ with $w_2$,
by obtaining the path  $w_1\oplus w_2$, which is an element of $\tilde\Omega_L^{\mathtt{NE},*}$. Moreover, 
we note that $\mathrm H\,(w_1\oplus w_2)\geq \mathrm H_{L'}(w_1)+\mathrm H_{L-L'}(w_2)$. 
We conclude that,
\begin{equation}
\begin{split}
&\tilde{\mathrm Z}_{\beta,\, L}^{\texttt{NE},*}
\geq 
\sum_{\substack{w=w_1\oplus w_2\, : \\ (w_1,w_2)\in \tilde\Omega_{L'}^{\mathtt{NE},*}\times\,  \tilde\Omega_{L-L'}^{\mathtt{NE},*}}}e^{\,\beta \mathrm H\,(w)}   \geq
\sum_{(w_1,w_2)\in \tilde\Omega_{L'}^{\mathtt{NE},*}\times\,  \tilde\Omega_{L-L'}^{\mathtt{NE},*}}
e^{\,\beta \mathrm H_{L'}(w_1)}e^{\,\beta \mathrm H_{L-L'}(w_2)} =\tilde{\mathrm Z}_{\beta,\, L'}^{\texttt{NE},*}\tilde{\mathrm Z}_{\beta,\, L-L'}^{\texttt{NE},*}.
\end{split}
\end{equation}
To prove that the limit is finite, we observe that $\mathrm H\,(w)\leq L$ and thus
$\tilde{\mathrm Z}_{\beta,\, L}^{\texttt{NE},*} \leq e^{\beta L}\, |\tilde\Omega_L^{\mathtt{NE},*}|\leq e^{\beta L}\, |\Omega_L^{\mathtt{NE}}|$. 
This conclude the proof because $\limsup_{L\to\infty}\frac{1}{L}\log |\Omega_L^{\mathtt{NE}}|<\infty$.
\end{proof}
We are going to compare $\tilde{\mathrm Z}_{\beta,\, L}^{\texttt{NE},*}$ with ${\mathrm Z}_{\beta,\, L}^{\texttt{NE},*}$, in order to obtain the existence of the free energy for ${\mathrm Z}_{\beta,\, L}^{\texttt{NE},*}$. 
By definition it holds that $\tilde{\mathrm Z}_{\beta,\, L}^{\texttt{NE},*}\leq {\mathrm Z}_{\beta,\, L}^{\texttt{NE},*}$.
On the other hand, we observe that given $w\in \tilde\Omega_L^{\tx{NE}}$, if we keep out
 the first stretch of $w$ (which has $0$ length), 
 then we obtain a path $w'\in \Omega_{L-1}^{\tx{NE}}$. The map which associates $w$ with $w'$  is bijective, because there is only one way to add a stretch of $0$ length to a block. Since $\mathrm H\,(w)=\mathrm H_{L-1}(w')$, 
we conclude that $\tilde{\mathrm Z}_{\beta,\, L}^{\texttt{NE},*}\geq {\mathrm Z}_{\beta,\, L-1}^{\texttt{NE},*}$.
As a consequence, we have that
\begin{equation}
\mathrm F^{\mathtt{NE},*}(\beta):=\lim_{L\to\infty}\frac{1}{L}\log {\mathrm Z}_{\beta,\, L}^{\texttt{NE},*}\qquad
\text{and} 
\qquad \mathrm F^{\mathtt{NE},*}(\beta)=\tilde{\mathrm F}^{\mathtt{NE},*}(\beta), \qquad \forall\, \beta \geq 0.
\end{equation}

We are ready to bound from below and above the function ${\mathrm Z}_{\beta,\, L}^{\texttt{NE}}$ by a suitable function for which the free energy exists. 
We let
\begin{equation}
\Phi_{L,\beta}:=\sum_{L'=1}^L {\mathrm Z}_{\beta,\, L'}^{\texttt{NE},*}{\mathrm Z}_{\beta,\, L-L'}^{\tx{IPDSAW}}.
\end{equation}
It is a standard fact, cf.\cite[Lemma 1.8]{GB07} that the existence of the free energy of ${\mathrm Z}_{\beta,\, L}^{\texttt{NE},*}$ and ${\mathrm Z}_{\beta,\, L}^{\tx{IDPSAW}}$ implies the existence of the free energy of $\Phi_{L,\beta}$ and 
\begin{equation}
\label{eq:freenergyconvmax}
\lim_{L\to\infty}\frac{1}{L}\log \Phi_{L,\beta} =\max\Big\{F^{\tx{IPDSAW}}(\beta),F^{\tx{NE},*}(\beta)\Big\}
\end{equation}
where $\mathrm F^{\tx{IPDSAW}}(\beta)$ is the free energy associated with ${\mathrm Z}_{\beta,\, L}^{\texttt{IPDSAW}}$ (its existence was proven in \cite{CGP13}).
\begin{proposition}\label{PropFreenergy}
It holds that 
\begin{equation}
\label{eq:ZNE-ZNE*}
\Phi_{L,\beta}
\leq 
{\mathrm Z}_{\beta,\, L}^{\tx{NE}}\leq 
e^{o(L)}\Phi_{L,\beta}.
\end{equation}
As a consequence we have that the free energy of ${\mathrm Z}_{\beta,\, L}^{\texttt{NE}}$ exists and it is finite, i.e., 
\begin{equation}
\label{eq:freeEnergyNE}
\mathrm F^{\mathtt{NE}}(\beta):=\lim_{L\to\infty}\frac{1}{L}\log {\mathrm Z}_{\beta,\, L}^{\texttt{NE}} <\infty.
\end{equation}
\end{proposition}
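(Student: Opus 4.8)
The plan is to establish the two‑sided bound \eqref{eq:ZNE-ZNE*}; once this is known, \eqref{eq:freeEnergyNE} is immediate. Indeed $\Phi_{L,\beta}$ is the convolution of the two sequences $\big(\mathrm Z_{\beta,L}^{\tx{NE},*}\big)_L$ and $\big(\mathrm Z_{\beta,L}^{\tx{IPDSAW}}\big)_L$, each of which admits a free energy — the former by Lemma \ref{lemma:SA*0} together with the identification $\mathrm F^{\tx{NE},*}=\tilde{\mathrm F}^{\tx{NE},*}$ obtained just above, the latter by \cite{CGP13} — so by \cite[Lemma 1.8]{GB07} the sequence $\Phi_{L,\beta}$ admits the free energy $\max\{\mathrm F^{\tx{IPDSAW}}(\beta),\mathrm F^{\tx{NE},*}(\beta)\}$, which is \eqref{eq:freenergyconvmax}. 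Sandwiching $\tfrac1L\log\mathrm Z_{\beta,L}^{\tx{NE}}$ between two sequences sharing this limit forces the limit in \eqref{eq:freeEnergyNE} to exist and to coincide with that value.

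For the lower bound I would restrict the sum defining $\mathrm Z_{\beta,L}^{\tx{NE}}$ to the NE‑prudent paths of the form $w=w_1\oplus\pi$, where $w_1\in\Omega_{L'}^{\tx{NE},*}$ is a NE‑prudent path all of whose oriented blocks satisfy the upper exit condition and $\pi$ is a single terminal partially directed block of length $L-L'$ whose orientation is the one making the concatenation admissible. Since gluing two paths can only create additional self‑touchings, $\mathrm H(w_1\oplus\pi)\ge\mathrm H_{L'}(w_1)+\mathrm H_{L-L'}(\pi)$; summing over $\pi$ — using the axial and rotational symmetries of Section \ref{sec:NE-IPSAW} to identify the terminal‑block contribution with $\mathrm Z_{\beta,L-L'}^{\tx{IPDSAW}}$ up to the harmless $O(1)$ bookkeeping of the two admissible orientations and of the sign of the first stretch — and then over $L'$, one gets $\mathrm Z_{\beta,L}^{\tx{NE}}\ge\Phi_{L,\beta}$.

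The upper bound is the substantial part. Every NE‑prudent path has a unique block decomposition $\pi_1\oplus\cdots\oplus\pi_r$ with $r\le L/4$; I would split off the last block, writing $w=w_1\oplus\pi_r$ with $w_1=\pi_1\oplus\cdots\oplus\pi_{r-1}\in\Omega_{L'}^{\tx{NE},*}$ and $\pi_r$ partially directed of length $L-L'$ (the case $r=1$, where $w$ is itself partially directed, is handled separately and reabsorbed into $\Phi_{L,\beta}$ using $\mathrm Z_{\beta,L}^{\tx{IPDSAW}}\le e^{o(L)}\mathrm Z_{\beta,L-L_0}^{\tx{IPDSAW}}$ for a fixed $L_0$ with $\mathrm Z_{\beta,L_0}^{\tx{NE},*}>0$, which follows from the existence and finiteness of $\mathrm F^{\tx{IPDSAW}}$). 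By the prudent condition $\pi_r$ interacts only with $\pi_{r-1}$ and $\pi_{r-2}$ and — exactly as in the proof of Theorem \ref{Thm1} — this cross‑interaction is at most $(\tilde N_{r-1}+f_{r-2})\vl d_r\le (N_{r-1}-1)+|f_{r-2}|$, a quantity that does \emph{not} involve the first stretch $d_r$ of $\pi_r$. Hence the $\pi_r$‑summation decouples and is bounded by $\mathrm Z_{\beta,L-L'}^{\tx{IPDSAW}}$, and it remains to show that $\sum_{w_1\in\Omega_{L'}^{\tx{NE},*}}e^{\beta\mathrm H_{L'}(w_1)}\,e^{\beta[(N_{r-1}(w_1)-1)+|f_{r-2}(w_1)|]}\le e^{o(L)}\mathrm Z_{\beta,L'}^{\tx{NE},*}$.

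I would prove this last estimate by iterating the peeling down the block decomposition of $w_1$: at each stage the cross‑interaction telescopes, attaching one further factor of the same shape $e^{\beta[(N_i-1)+|f_{i-1}|]}$ to the $i$‑th block, while the identity $\mathrm H(\pi_i)+(N_i-1)\le t_i$ — a direct consequence of \eqref{eq:ham12} — shows that each block carries an over‑weight at most $e^{\beta t_i}$. Using $\sum_i t_i\le L'$, the bound $r\le L/4$, the random‑walk/area representation of partially directed blocks from \cite{CGP13}, and the inequality $\mathrm F^{\tx{IPDSAW}}(\beta)\ge\beta$ — hence also $\mathrm F^{\tx{NE},*}(\beta)\ge\beta$, obtained by inserting near‑saturated single blocks (of $\sim\sqrt{t}$ long stretches of alternating sign, the last one lengthened to meet the exit condition) — one collapses the product $\prod_i\mathrm Z_{\beta,t_i}^{\tx{IPDSAW}}$ carrying the over‑weights into a single $\mathrm Z_{\beta,L'}^{\tx{NE},*}$ at a subexponential cost. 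The main obstacle is the extended regime, where a single block can have $N_i$ of order $t_i$, so that the crude telescoping would pay $e^{\beta\gamma t_i}$ with $\gamma>0$; the remedy is to replace the crude bound on the cross‑interaction by the sharp estimate \eqref{eq2}, with its negative term $-\tfrac14|f_{i-1}+d_{i+1}|$, and to carry out the even/odd block concatenation of the proof of Theorem \ref{Thm1}, which turns the accumulated over‑weight into a random‑walk partition function on an interval of length $L'$ whose exponential growth rate is precisely $\mathrm F^{\tx{NE},*}(\beta)$. Combining the two inequalities yields \eqref{eq:ZNE-ZNE*}, and letting $L\to\infty$ together with \eqref{eq:freenergyconvmax} gives \eqref{eq:freeEnergyNE}.
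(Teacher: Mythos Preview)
Your lower bound is exactly the paper's argument: restrict to concatenations $w_1\oplus\pi$ with $w_1\in\Omega_{L'}^{\tx{NE},*}$ and $\pi$ a terminal partially directed block, and use super‑additivity of the Hamiltonian.

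The upper bound, however, has a genuine gap. After peeling off $\pi_r$ you need
\[
\sum_{w_1\in\Omega_{L'}^{\tx{NE},*}} e^{\beta\mathrm H(w_1)}\,e^{\beta[(N_{r-1}-1)+|f_{r-2}|]}\le e^{o(L)}\,\mathrm Z_{\beta,L'}^{\tx{NE},*},
\]
and you yourself identify the obstacle: in the extended regime $N_{r-1}$ (and the iterated $N_i$'s after further peeling) can be of order $t_i$, so the over‑weight is genuinely exponential. Your proposed remedy --- feed the over‑weights into the sharp estimate \eqref{eq2} and run the even/odd concatenation of Section~\ref{proof of Thm1} --- does not do what you claim. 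That machinery produces the bound \eqref{eq:estFZnePW2}, a geometric series in $c_\beta e^{-\beta/4}$ and $c_{\beta/2}$ that converges \emph{only for large $\beta$}; it does not compare the over‑weighted sum back to $\mathrm Z_{\beta,L'}^{\tx{NE},*}$ for arbitrary $\beta$. The assertion that the resulting random‑walk partition function ``has exponential growth rate precisely $\mathrm F^{\tx{NE},*}(\beta)$'' is unsupported: that partition function is not $\mathrm Z_{\beta,L'}^{\tx{NE},*}$, and no identification is provided.

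The paper bypasses this entirely by using a \emph{mapping} argument, not an energy estimate. One applies the large‑stretch‑removing procedure of Section~\ref{proof of thh1} to $\pi_r$ alone: delete the initial run of stretches of $\pi_r$ exceeding $L^{1/4}$, record their total length $\tilde L$, and re‑insert a square block of length $\tilde L$ (of $\sqrt{\tilde L}$ alternating stretches) at the front of the truncated $\pi_r$. Set $u'=\pi_1\oplus\cdots\oplus\pi_{r-1}\in\Omega_{L'}^{\tx{NE},*}$ and let $w'$ be the modified last block, so $(u',w')\in\Omega_{L'}^{\tx{NE},*}\times\Omega_{L-L'}^{\tx{PDSAW}}$. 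Because the first surviving stretch of $\pi_r$ is at most $L^{1/4}$, the cross‑interaction between $\pi_r$ and $u'$ is $O(L^{1/4})$ and the energy lost in the removal is compensated, up to $O(\tilde L^{1/2})$, by the square block; hence $\mathrm H(w)-\mathrm H(u')-\mathrm H(w')=o(L)$. The map $w\mapsto(u',w')$ has sub‑exponentially many ancestors (at most $\sim L^{3/4}$ removed stretches, so the counting is as in \eqref{uppbsb}--\eqref{endup}). This immediately gives the right inequality in \eqref{eq:ZNE-ZNE*} for every $\beta\ge 0$.
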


\begin{proof}
%
%
To prove the lower bound in \eqref{eq:ZNE-ZNE*} we consider the family of disjoints sets $\Omega_{L'}^{\mathtt{NE},*}\times \Omega_{L-L'}^{\mathtt{PDSAW}}$, with $L'\in \{0,\dots, L\}$. 
For any $(w',w'') \in \displaystyle\bigcup_{0\leq L'\leq L}\Omega_{L'}^{\mathtt{NE},*}\times \Omega_{L-L'}^{\mathtt{PDSAW}}$.
Let $w=w'\oplus w''$ be the concatenation of $w''$ with $w'$. Since $\mathrm H_N(w)\geq \mathrm H_{L'}(w')+\mathrm H_{L-L'}(w'')$ we have
\begin{align*}
&\mathrm Z_{L,\beta}^{\mathtt{NE}} := \sum_{w\in \Omega_L^{\tx{NE}}} e^{\, \beta \mathrm H\,(w)} \geq
\sum_{L'=0}^L \sum_{\substack{w\in \Omega_L^{\tx{NE}}\, : \\ w=w'\oplus w'',\\ (w',w'')\in \Omega_{L'}^{\mathtt{NE},*}\times \Omega_{L-L'}^{\mathtt{PDSAW}}}} e^{\, \beta \mathrm H\,(w)} 
\\ & \qquad \geq
 \sum_{L'=0}^L \sum_{\substack{w\in \Omega_L^{\tx{NE}}\, : \\ w=w'\oplus w'',\\ (w',w'')\in \Omega_{L'}^{\mathtt{NE},*}\times \Omega_{L-L'}^{\mathtt{PDSAW}}}} e^{\, \beta (\mathrm H\,(w')+\mathrm H\,(w'))}=
 \sum_{L'=1}^L {\mathrm Z}_{\beta,\, L'}^{\texttt{NE},*}{\mathrm Z}_{\beta,\, L-L'}^{\texttt{IPSAW}} .
\end{align*}

The strategy to prove the upper bound in \eqref{eq:ZNE-ZNE*} is similar to the strategy used for the proof of Theorem \ref{thh1} in Section \ref{proof of thh1}. 
%
%
%
%
%
To be more precise, we associate with each $w\in \Omega_L^{\mathtt{NE}}$ two paths $u'\in \Omega_{L'}^{\mathtt{NE},*}$ and  $w'\in \Omega_{L-L'}^{\mathtt{PDSAW}}$, for some $0< L'< L$, with $L'=L'(w)$, through a sub-exponential function (cf. Definition \ref{subex}).
We let 
$(\pi_1,\dots,\pi_r)$ be the block 
decomposition of $w$. 
We consider the last block $\pi_r$, of length $L-L'$, for some $L'<L$. We apply the large stretch removing procedure to $\pi_r$, i.e., by starting from the first stretch,
we pick off all the consecutive stretches larger than $L^{1/4}$ in the block $\pi_r$. Let $\pi_r'$ be the result of this operation. 
Let $\tilde L$ be the total length of the stretches that we picked off. We define an oriented block made of $\sqrt{\tilde L}$ vertical stretches of alternating sings of length $\sqrt{\tilde L}-1$. 
This configuration maximizes the energy of a block of length $\tilde L$. 
The orientation of this block is the same
as that of $\pi_r$.
We concatenate this block with $\pi_r'$ and we call
$w'$ the path obtained at the end of this operation. 
By construction $w'\in \Omega_{L-L'}^{\mathtt{PDSAW}}$. 
We let $u':=\pi_1\oplus \dots\oplus \pi_{r-1}$, so that  $(u',w')\in \Omega_{L'}^{\mathtt{NE},*}\times \Omega_{L-L'}^{\mathtt{PDSAW}}$.
%
The computations we did in Steps $1-4$ in Section \ref{proof of thh1} ensure that the function which associates $w$ with $(u',w')$ is sub-exponential and, by reasoning as in Step $5$ of Section \ref{proof of thh1}, it turns out that $H\,(w)- \big(H_{L'}(u')+H_{L-L'}(w')\big)\leq o(L)$, uniformly on $w\in \Omega_L^{\tx{NE}}$. This suffices to conclude the proof.

\end{proof}

\section*{Acknowledgements}

 We thank Philippe Carmona for fruitful discussions.
\bibliographystyle{apalike}
\bibliography{biblioT.bib}

\end{document}